\def\C{\ensuremath{\mathbb{C}}}
\def\P{\ensuremath{\mathbb{P}}}
\def\Q{\ensuremath{\mathbb{Q}}}
\def\R{\ensuremath{\mathbb{R}}}
\def\Z{\ensuremath{\mathbb{Z}}}
\def\tH{\ensuremath{\widetilde{H}}}
\def\NS{\mathop{\mathrm{NS}}\nolimits}
\def\Hdg{\mathrm{Hdg}}
\def\rank{\mathrm{rank}}
\newtheorem{Thm}{Theorem}[section]
\newtheorem{Prop}[Thm]{Proposition}
\newtheorem{Lem}[Thm]{Lemma}
\newtheorem{Cor}[Thm]{Corollary}
\newtheorem{Ques}{Question}
\newtheorem*{Ques*}{Question}
\theoremstyle{definition}
\newtheorem{Def}[Thm]{Definition}
\newtheorem{Rem}[Thm]{Remark}
\newtheorem{Ex}[Thm]{Example}
\newtheoremstyle{italicsname}
 {3pt}
 {3pt}
 {\itshape}
 {}
 {\itshape}
 {.}
 {.5em}
 {\thmname{#1}\thmnumber{\@ifnotempty{#1}{ }#2}%
 \thmnote{ {\the\thm@notefont(#3)}}}
\theoremstyle{italicsname}
\numberwithin{equation}{section}
\begin{document}

\title[On the transcendental lattices of Hyperk\"ahler manifolds]{On the transcendental lattices of Hyperk\"ahler manifolds}

\author[Benedetta Piroddi, Ángel David Ríos Ortiz]{Benedetta Piroddi,  Ángel David Ríos Ortiz}

\address{Dipartimento di Matematica ``Federigo Enriques'', Universit\`a degli Studi di Milano, Via Cesare Saldini 50, 20133 Milano, IT.}
\email{benedetta.piroddi@unimi.it}
\address{Universit\'e Paris-Saclay, CNRS, Laboratoire de Math\'ematiques d'Orsay, B\^at. 307, 91405 Orsay, France}
\email{angel-david.rios-ortiz@universite-paris-saclay.fr}

\begin{abstract}
We introduce the notion of a Hyper-K\"{a}hler manifold $X$ induced by a Hodge structure of K3-type. We explore this notion for the known deformation types of hyper-K\"{a}hler manifolds studying those that are induced by a K3 or abelian surface, giving lattice-theoretic criteria to decide whether or not they are birational to a moduli space of sheaves over said surface. We highlight the different behaviors we find for the particular class of hyper-K\"{a}hler manifolds of O'Grady type.
\end{abstract}

\maketitle

\section{Introduction}

Ricci flat compact K\"ahler manifolds have a central place in both the study of geometry and the world of theoretical physics. The famous Beauville-Bogomolov decomposition theorem distinguishes three building blocks: complex tori, Calabi-Yau and Hyper-K\"{a}hler (HK) manifolds. This latter class of manifolds will be the main object of the present work. HK manifolds occupy an enigmatic and distinctive position in the classification: enigmatic since there are very few known examples and no reasons to believe these are the only possible ones; distinctive because strong structure theorems hold for them, making this class a testing ground for several conjectures in complex geometry and beyond. In this paper we will look at them from the point of view of Algebraic Geometry and through the lens of Hodge theory.\\

K3 surfaces, i.e. simply connected Ricci-flat complex surfaces, are HK manifolds of dimension two. The celebrated Torelli theorem of Pyatetskii-Shapiro-Shafarevich-Burns-Rapoport  states that two K3 surfaces can be distinguished by their weight-two Hodge structure together with the intersection product. For higher dimensional HK manifolds a similar role is played by the  the weight-two Hodge structure in cohomology together with a lattice structure given by a natural non-degenerate quadratic form, known as BBF-form. An analogue of the Torelli theorem using this structure and some non-Hodge theoretical data was laid out by Verbitsky, see D. Huybrechts' excellent Bourbaki talk  \cite{Huybrechts2012} for a complete account.\\

From the point of view of Hodge theory, the most interesting piece of the weight-two cohomology is given by its \emph{transcendental part} with the lattice structure induced by the BBF-form: this will be the main player in the present paper. The philosophy behind this work is that two (projective) HK manifolds that share the same transcendental lattices are tightly related to each other, even if they are \emph{not} of the same deformation type, and sometimes even if one of them is not a HK manifold but has a Hodge structure that \emph{resembles} one.\\

Indeed we point out that, with a few exceptions, all known constructions of HK manifolds are obtained as some moduli space on either a symplectic surface (abelian or K3) or a cubic fourfold: hence, the transcendental part of their Hodge structure will be completely determined by the symplectic surface or cubic fourfold.\\

With the philosophy explained above, in this work we will investigate the following question: given a hyper-K\"{a}hler manifold belonging to one of the known deformation types for which the transcendental lattice is that of a symplectic surface (we call them \emph{induced}, see Definition \ref{Def:inducedHK}), is this manifold birational to a moduli space over said surface? In Section 3  we positively answer such a question for the two \emph{classical} deformation types in every dimension. On the other hand, we obtain that for the most \emph{exotic} types of HK manifolds, the question turns out to be more delicate and interesting.\\

This paper is structured as follows. Section 2 is devoted to preliminaries. In Section 3 we discuss Beauville's examples \cite{Beauville1983}, the first discovered examples of HK manifolds. We prove that a HK manifold $X$ deformation equivalent to a \emph{generalized Kummer} manifold on an abelian surface $A$ is induced by $A$ if and only if $X$  is birational to a moduli space on $A$, and an analogous result holds if $X$ is birational to a Hilbert scheme of points on a K3 surface $S$ and is induced by $S$
(the latter case was already essentially proved by Markman in \cite{Markman2010}).\\

In Section 4 we study O'Grady's examples \cite{O'Grady1997,O'Grady2006}, where we can actually see new phenomena appearing: transcendental data is not enough to determine whether a manifold will be a (desingularized) moduli space, but we have to include \emph{algebraic} data; this is manifestly shown in the cohomology of these manifolds, and to explore these cases lattice theory is needed. We give lattice-theoretic criteria and construct many examples of Hodge structures of manifolds in O'Grady's families which are induced by symplectic surfaces but are \emph{not} moduli spaces. We highlight the very different behavior of induced HK manifolds which are or aren't moduli spaces, and we point out connections with the non-modular construction of O'Grady's 10 dimensional example due to Laza-Saccà-Voisin.\\

The paper closes with Section 5, dedicated to remarks and open questions. For the sake of completeness in Appendix A all the necessary lattice theory needed in the paper is carefully stated with precise references to the literature.

\section*{Acknowledgments}
One motivation of writing this paper came from a question asked by Kieran O'Grady to the second author back in 2020, we thank Kieran for that insight. Also we thank Annalisa Grossi, Giovanni Mongardi and Claudio Onorati for several illuminating discussions and their useful comments.

\section{Induced Hyperk\"ahler manifolds}

A hyper-K\"{a}hler (HK) manifold  $X$ is a smooth, simply connected, compact K\"{a}hler manifold such that $H^0(X,\Omega_X^2)$ is generated by an everywhere non-degenerate holomorphic two-form $\sigma_X$. The second cohomology group of $X$ has therefore a pure weight-two Hodge structure of the following form:
\begin{equation}
    H^2(X,\C) = H^{2,0}(X)\oplus H^{1,1}(X)\oplus H^{0,2}(X) = \C\sigma_X \oplus H^{1,1}(X)\oplus \C \overline{\sigma}_X.
\end{equation}
The existence of the symplectic form $\sigma_X$ implies that $X$ has trivial canonical class and even complex dimension.

\begin{Thm}[Beauville, Bogomolov, Fujiki]\label{Thm:BeauvilleBogomolovFujikiform}
Let $X$ be a HK manifold of dimension $2n$. There exists a primitive integral quadratic form $q_X$ on $H^2(X,\Z)$ which is non-degenerate, invariant under deformations and of signature $(3, b_2(X)-3)$.
\end{Thm}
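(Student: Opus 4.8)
The plan is to obtain $q_X$ from the \emph{Fujiki relation} and deduce every asserted property from it. Following Beauville, I would take as a candidate the explicit quadratic form on $H^2(X,\C)$
\[
q(\alpha)=\tfrac{n}{2}\int_X\alpha^{2}(\sigma_X\bar\sigma_X)^{n-1}+(1-n)\Bigl(\int_X\alpha\,\sigma_X^{\,n-1}\bar\sigma_X^{\,n}\Bigr)\Bigl(\int_X\alpha\,\sigma_X^{\,n}\bar\sigma_X^{\,n-1}\Bigr),
\]
normalised by $\int_X(\sigma_X\bar\sigma_X)^n=1$. A bookkeeping with Hodge bidegrees gives $q(\sigma_X)=q(\bar\sigma_X)=0$, shows that the real plane $\langle\operatorname{Re}\sigma_X,\operatorname{Im}\sigma_X\rangle$ is $q$-positive and $q$-orthogonal to $H^{1,1}(X)$, and yields $q|_{H^{1,1}(X)}(\beta)=\tfrac{n}{2}\int_X\beta^{2}(\sigma_X\bar\sigma_X)^{n-1}$.

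The heart of the matter, and the step I expect to be the genuine obstacle, is the Fujiki relation $\int_X\alpha^{2n}=c_X\,q(\alpha)^n$ with $c_X\in\R_{>0}$. Writing $\alpha=\lambda\sigma_X+\bar\lambda\bar\sigma_X+\beta$ with $\beta\in H^{1,1}(X,\R)$, only the monomials $(\sigma_X\bar\sigma_X)^{a}\beta^{2(n-a)}$ survive integration for bidegree reasons, so the relation reduces to the claim that for $\beta\in H^{1,1}(X,\R)$ the degree-$2k$ form $\beta\mapsto\int_X(\sigma_X\bar\sigma_X)^{n-k}\beta^{2k}$ is a universal scalar multiple of the $k$-th power of the quadratic form $\beta\mapsto\int_X(\sigma_X\bar\sigma_X)^{n-1}\beta^{2}$; granting this, the multinomial coefficients match and the relation follows. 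This is the one point where input beyond formal manipulation is needed, and I would supply it either by deforming the complex structure --- the differential of the period map $[X']\mapsto[\sigma_{X'}]$ is injective (local Torelli, via the symplectic isomorphism $T_X\cong\Omega_X^1$ together with Bogomolov--Tian--Todorov unobstructedness), so $[\sigma_X]$ varies in a family of dimension $b_2(X)-2$ and the two sides, being holomorphic there, can be forced to agree --- or via the $\mathfrak{sp}(1)$-action of Fujiki--Verbitsky on $H^\ast(X,\R)$ attached to a hyper-K\"ahler metric, under which $\sigma_X\bar\sigma_X$ is governed by the twistor K\"ahler forms. Evaluating the relation at a K\"ahler class $\kappa$, where $q(\kappa)=\tfrac{n}{2}\int_X\kappa^2(\sigma_X\bar\sigma_X)^{n-1}>0$ and $\int_X\kappa^{2n}>0$, shows $c_X>0$.

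Granting the Fujiki relation, the rest is formal. Polarising $\int_X\alpha^{2n}=c_X\,q(\alpha)^n$ at $\kappa$ gives $q(\kappa,\beta)=0\iff\int_X\kappa^{2n-1}\beta=0$ and $q(\beta)=(\text{positive constant})\cdot\int_X\kappa^{2n-2}\beta^{2}$ whenever $q(\kappa,\beta)=0$, so the $q$-orthogonal complement of $\kappa$ in $H^2(X,\R)$ is precisely the Lefschetz-primitive subspace; the Hodge--Riemann bilinear relations for $\kappa$ then give $q>0$ on $\langle\operatorname{Re}\sigma_X,\operatorname{Im}\sigma_X\rangle$ and $q<0$ on the real primitive $(1,1)$-classes, which together with $q(\kappa)>0$ and the orthogonal splitting $H^2(X,\R)=\langle\operatorname{Re}\sigma_X,\operatorname{Im}\sigma_X\rangle\perp H^{1,1}(X,\R)$ yields signature $(3,b_2(X)-3)$ and in particular non-degeneracy. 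Since $\alpha\mapsto\int_X\alpha^{2n}$ is $\Z$-valued on $H^2(X,\Z)$ it is a polynomial with rational coefficients; as $\C[x_1,\dots,x_{b_2}]$ is a unique factorisation domain and $q$, being non-degenerate, is irreducible, $q$ is proportional to a rational quadratic form, and clearing denominators and dividing by the content produces a \emph{primitive integral} form $q_X$ on $H^2(X,\Z)$, with the sign fixed by requiring $q_X>0$ on the K\"ahler cone.

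It remains to address deformation invariance. All the data pinning down $q_X$ --- the cup-product pairing $H^2(X,\Z)^{\otimes 2n}\to\Z$, the integral structure, primitivity, and the sign normalisation via the (always non-empty) K\"ahler cone --- are topological, and the uniqueness just used ($q_1^{\,n}\propto q_2^{\,n}$ as polynomials forces $q_1\propto q_2$) makes $q_X$ the unique primitive integral form with these features. Hence for a smooth proper family $\pi\colon\mathcal X\to B$ of HK manifolds over a connected base, parallel transport in the local system $R^2\pi_*\Z$ preserves the cup-product structure and, chaining through the Kodaira--Spencer stability of K\"ahler classes, the sign normalisation, so it carries $q_{X_b}$ to $q_{X_{b'}}$ for all $b,b'\in B$. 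The only genuinely hard step is the Fujiki relation; once it is in hand, integrality, non-degeneracy, the signature and deformation invariance follow by the manipulations above.
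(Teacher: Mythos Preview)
The paper does not supply a proof of this theorem: it is stated as a classical result attributed to Beauville, Bogomolov and Fujiki and then used as background, with references to \cite{Beauville1983,Rapagnetta2007,Rapagnetta2008} for the explicit lattice structures in Table~\ref{Table:BBformknowndeformationtypes}. So there is no ``paper's own proof'' to compare against.

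That said, your sketch is a faithful and essentially correct outline of the standard argument in the literature, following Beauville's original treatment: the explicit quadratic form, the Fujiki relation $\int_X\alpha^{2n}=c_X\,q(\alpha)^n$ as the crux, the signature via Hodge--Riemann on the Lefschetz-primitive subspace, integrality from the UFD argument applied to the rational polynomial $\alpha\mapsto\int_X\alpha^{2n}$, and deformation invariance from the topological nature of the defining data. Your identification of the Fujiki relation as the one genuinely non-formal step, and of local Torelli (or the hyper-K\"ahler $\mathfrak{sp}(1)$-action) as the tool that supplies it, is accurate. One small caveat: the sign of $q_X$ is determined only up to $\pm 1$ by primitivity alone, and while your normalisation via positivity on the K\"ahler cone is the standard convention, strictly speaking this normalisation depends on the complex (K\"ahler) structure rather than only on topology; deformation invariance of this sign choice is then what your last paragraph actually establishes. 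Nothing is wrong here, but it is worth being explicit that the uniqueness argument gives $q_X$ up to sign and the sign is then pinned down compatibly across the family.
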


The quadratic form $q_X$ in Theorem \ref{Thm:BeauvilleBogomolovFujikiform} is known as the Beauville-Bogomolov form on $X$. The integrality of $q_X$ gives to $H^2(X,\Z)$ the structure of an integral lattice that depends only on the deformation type. This lattice structure has been determined  in \cite{Beauville1983,Rapagnetta2007,Rapagnetta2008} whose results are summarized in Table \ref{Table:BBformknowndeformationtypes}, cf. Appendix \ref{Sec:LatticeTheory} for the relevant notations.

\begin{table}[ht]
\centering
\begin{tabular}{|c|c|c|c|}
\hline
$X$ & ${\rm dim}(X)$ & $b_{2}(X)$ & $(H^{2}(X,\Z)$, $q_{X}$) \\
\hline
$\mathrm{K3}^{[n]}$ & $2n$ & $23$ & $\Lambda_{\mathrm{K3}^{[n]}} := U^{\oplus 3}\oplus E_{8}^{\oplus 2}\oplus\langle -2(n-1)\rangle$ \\
\hline
${\rm Kum}^{n}$ & $2n$ & $7$ & $\Lambda_{\mathrm{Kum}^{n}} := U^{\oplus 3}
\oplus \langle -2(n+1)\rangle$ \\
\hline
$\mathrm{OG6}$ & $6$ & $8$ & $\Lambda_{\mathrm{OG6}}:=U^{\oplus 3}
\oplus\langle-2\rangle^{\oplus 2}$ \\
\hline
$\mathrm{OG10}$ & $10$ & $24$ & $\Lambda_{\mathrm{OG10}}:=U^{\oplus 3}\oplus E_{8}^{\oplus 2}\oplus A_2$ \\
\hline
\end{tabular}
\caption{The Beauville-Bogomolov lattice for the known deformation types of HK manifolds.\label{Table:BBformknowndeformationtypes}}
\end{table}

Since $X$ is simply connected, its Néron-Severi group $\NS(X)$ is a free abelian group and therefore a lattice when is equipped with the quadratic form $q_X$.

\begin{Def}
Let $X$ be a HK manifold. The transcendental lattice of $X$ is the lattice $T(X)= \NS(X)^{\perp q_X}$, endowed with the Beauville-Bogomolov form $q_X$ and the weight-two Hodge structure induced by $H^2(X,\Z)$.
\end{Def}

The transcendental lattice is a \emph{primitive} sublattice of $H^2(X,\Z)$ (see Definition \ref{embedding}). Isomorphisms of transcendental lattices are \emph{Hodge isometries}, that is, isomorphisms of Hodge structures that are isometries with respect to the Beauville-Bogomolov form. Since the Neron-Severi group has trivial Hodge structure (it is identified with integral classes in $H^{1,1}(X)$ by Lefschetz's Theorem) one expects that \emph{most} of the geometric information of the HK manifold is already encoded in its transcendental lattice. Moreover, the transcendental lattice is a \emph{birational invariant}.

\begin{Lem}\cite[(1.6.2)]{O'Grady1997}\label{Lem:bimeromorphicisomorphiclattices}
If two HK manifolds $X$ and $Y$ are bimeromorphic, then there exists a Hodge isometry $H^2(X, \Z)\cong_{\Hdg} H^2(Y,\Z)$. In particular their transcendental lattices are Hodge isometric.
\end{Lem}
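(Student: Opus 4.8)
The plan is to reduce the statement to the surface case, where it is classical, by using the fact that a bimeromorphic map between HK manifolds induces an isomorphism on second cohomology respecting all the relevant structures. First I would recall the standard fact that a bimeromorphic (equivalently, birational, in the K\"ahler setting) map $f\colon X \dashrightarrow Y$ between HK manifolds is an isomorphism in codimension one: the indeterminacy locus of $f$ and of $f^{-1}$ has codimension at least two, because the canonical bundles are trivial and a bimeromorphic map contracting a divisor would force a discrepancy, contradicting triviality of $K_X$ (alternatively, this is \cite[(1.6.2)]{O'Grady1997} itself, or one may invoke that $X$ and $Y$ are $K$-equivalent). Hence there are Zariski-open subsets $U \subset X$ and $V \subset Y$ with complements of codimension $\geq 2$, and $f$ restricts to an isomorphism $U \xrightarrow{\sim} V$.

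Next I would use the codimension bound to transfer cohomology. Since $\mathrm{codim}(X \setminus U) \geq 2$ and likewise for $Y$, the restriction maps $H^2(X,\Z) \to H^2(U,\Z)$ and $H^2(Y,\Z) \to H^2(V,\Z)$ are isomorphisms (for the integral statement one uses that $X$, $Y$ are smooth and simply connected, so $H^2$ is torsion-free and the Gysin sequence, together with the absence of codimension-one contributions, gives the isomorphism). Composing, we obtain an isomorphism $f^*\colon H^2(Y,\Z) \xrightarrow{\sim} H^2(X,\Z)$. This map is a morphism of Hodge structures because it is induced by an honest isomorphism of complex manifolds $U \cong V$ (pullback of forms preserves the $(p,q)$-decomposition, and $H^{2,0}(X) = H^0(X,\Omega_X^2) \to H^0(U,\Omega_U^2)$ is an isomorphism by Hartogs, similarly for $Y$). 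In particular $f^*\sigma_Y$ is a nowhere-degenerate holomorphic two-form on $X$, hence a nonzero multiple of $\sigma_X$.

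It remains to check that $f^*$ is an isometry for the Beauville--Bogomolov forms $q_X$ and $q_Y$. Here I would invoke the Fujiki relation: there is a constant $c_X > 0$ with $\int_X \alpha^{2n} = c_X\, q_X(\alpha)^n$ for all $\alpha \in H^2(X,\R)$, and similarly for $Y$. Since $f$ restricts to an isomorphism between the big open sets $U$ and $V$, and top-degree cohomology is also computed on these open sets (again using the codimension-$\geq 2$ complement and, say, that the fundamental classes match), we get $\int_X (f^*\beta)^{2n} = \int_Y \beta^{2n}$ for all $\beta \in H^2(Y,\R)$; alternatively one argues via a common resolution $Z \to X$, $Z \to Y$ of the bimeromorphic map and compares the two pushforwards, noting that the exceptional contributions cancel because the map is an isomorphism in codimension one. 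Combining with Fujiki gives $c_X\, q_X(f^*\beta)^n = c_Y\, q_Y(\beta)^n$; since $f^*$ is a lattice isomorphism $q_X \circ f^*$ is a nonzero integral quadratic form proportional to $q_Y$, and comparing discriminants (or using that both are primitive by Theorem \ref{Thm:BeauvilleBogomolovFujikiform}, up to sign fixed by the signature $(3,b_2-3)$) forces $c_X = c_Y$ and $q_X(f^*\beta) = q_Y(\beta)$. Thus $f^*$ is the desired Hodge isometry $H^2(Y,\Z) \cong_{\Hdg} H^2(X,\Z)$.

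Finally, for the transcendental lattices: $f^*$ is a Hodge isometry, so it sends $H^{1,1}(Y,\Z) = \NS(Y)$ isomorphically onto $\NS(X)$, hence it sends the orthogonal complement $T(Y) = \NS(Y)^{\perp q_Y}$ isometrically onto $T(X) = \NS(X)^{\perp q_X}$, and it respects the induced weight-two Hodge structures; this is exactly a Hodge isometry $T(X) \cong_{\Hdg} T(Y)$. The main obstacle in making this fully rigorous is the integral comparison of $H^2$ and of top cohomology across the bimeromorphic map — i.e. controlling precisely why the codimension-$\geq 2$ indeterminacy loci contribute nothing to $H^2$ and to the intersection form — which on a first pass I would handle either by the common-resolution argument or by citing \cite[(1.6.2)]{O'Grady1997} directly, since that reference already packages exactly this statement.
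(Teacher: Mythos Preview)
The paper does not give its own proof of this lemma: it simply states the result and cites \cite[(1.6.2)]{O'Grady1997}. So there is no ``paper's own proof'' to compare against.

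Your argument is the standard one and is essentially correct. A couple of minor comments: your deduction that $c_X=c_Y$ from the proportionality $c_X\,q_X(f^*\beta)^n=c_Y\,q_Y(\beta)^n$ works, but it is cleaner to note that bimeromorphic HK manifolds are deformation equivalent (Huybrechts), whence the Fujiki constants agree immediately; alternatively, primitivity of both $q_X$ and $q_Y$ (Theorem~\ref{Thm:BeauvilleBogomolovFujikiform}) together with the positivity of the signature forces the ratio to be $1$, as you say. The opening sentence about ``reducing to the surface case'' is a bit misleading since nothing in the argument is specific to surfaces; you may want to drop it. Otherwise the sketch is sound and matches the content of the cited reference.
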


Following Charles \cite[Definition 4.1]{Charles2022} we will define an abstract Beauville-Bogomolov form on a Hodge structure of K3-type -- see also \cite[Chapter 4]{Huybrechts2016}.

\begin{Def}
A pure integral Hodge structure $V$ is said to be of \emph{K3-type} if $V$ has weight two and 
\[
\dim_\C(V^{2,0})=1 \qquad \text{and} \qquad V^{p,q} = 0 \quad \text{for} \quad |p-q|>2.
\]
A \emph{Beauville-Bogomolov form} on $V$ is a non-degenerate quadratic form $q$ on $V$ that induces a morphism of integral Hodge structures $q:V\otimes V\to\Z$ and is positive definite on the real part of $V^{2,0}\oplus V^{0,2}$.    
\end{Def}

Two Hodge structures of K3-type endowed with a Beauville-Bogomolov form are isomorphic if there exists an isomorphism of Hodge structures which is an isometry with respect to their Beauville-Bogomolov forms.

\begin{Ex}
If $X$ is a HK manifold, then $H^2(X,\Z)$ endowed with its Beauville-Bogomolov form (see Table \ref{Table:BBformknowndeformationtypes}) is a Hodge structure of K3-type. The transcendental lattice $T(X)$ with the Beauville-Bogomolov form is also a Hodge structure of K3-type.
\end{Ex}
\begin{Ex}
Let $A$ be an abelian surface, then $H^2(A,\Z)$ 
endowed with the intersection form is a
Hodge structure of K3-type whose lattice structure is $H^2(A,\Z)=U^{\oplus 3}$.
The transcendental lattice $T(A)= \NS(A)^\perp$  with the restriction of the intersection form is also a Hodge structure of K3-type.
\end{Ex}
\begin{Ex}\label{Ex:cubicfourfold}
Let $Y\subseteq\P^5$ be a cubic fourfold, $h\in H^2(Y,\mathbb Z)$ the hyperplane class. Then $H^4_{prim}(Y,\Z)={(h^2)}^{\perp_{H^4(Y,\Z)}}$ endowed with the cup product is a
Hodge structure of K3-type: its lattice structure is $H^4_{prim}(Y,\Z)=U^{\oplus 2}\oplus E_8^{\oplus 2}\oplus A_2$.
The transcendental lattice $H^4_{tr}(Y,\Z):= (H^{2,2}Y\cap H^4_{prim}(Y,\Z))^\perp$ with the restriction of the cup product is also a Hodge structure of K3-type.
\end{Ex}

The abstract definition of a Hodge structure of K3-type is sufficiently broad to include other Hodge structures which are not necessarily of geometric origin, as for example, non-commutative K3 surfaces in the sense of Kuznetsov \cite{Kuznetsov2010}.

\begin{Def}\label{Def:inducedHK}
Let $X$ be a projective HK manifold and $(T,q)$ be a Hodge structure of K3-type with a Beauville-Bogomolov form. We say that $X$ is \emph{induced} by $T$ if there exists a Hodge isometry $T(X)\cong_\Hdg T$.
\end{Def}

Projective HK manifolds of dimension $2$ are projective K3 surfaces. By results due to Orlov \cite[Theorem 3.3]{Orlov1997} a projective K3 surface which is induced by the Hodge structure of another K3 surface is derived equivalent to the latter (that is, their derived categories are isomorphic). Mukai showed  that a K3 surface which is induced by the Hodge structure of another K3 surface is a moduli space over the latter \cite{Mukai1987}. This gives a very precise interpretation of induced HK manifolds in dimension $2$.

\begin{Thm}[Orlov, Mukai]\label{Thm:FourierMukaiPartners}
Let $S$ and $S'$ be two projective K3 surfaces. The following are equivalent:
\begin{enumerate}
    \item $S$ is induced by $T(S')$.
    \item $S$ and $S'$ are derived equivalent.
    \item $S$ is isomorphic to a fine two-dimensional moduli space over $S'$.
\end{enumerate}
\end{Thm}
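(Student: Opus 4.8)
The plan is to close the cycle of implications around statement~$(1)$. Two of them are already available to us: $(1)\Rightarrow(2)$ is Orlov's theorem \cite[Theorem~3.3]{Orlov1997}, and $(1)\Rightarrow(3)$ is Mukai's theorem \cite{Mukai1987} (see also \cite{Huybrechts2016}), where one only has to note in addition that the moduli space occurring there can be taken \emph{fine} and \emph{two-dimensional} --- this is automatic, since the relevant Mukai vector $v$ is primitive with $v^{2}=0$, so that $M_{S'}(v)$ is a smooth K3 surface carrying a universal family. It therefore remains to prove $(2)\Rightarrow(1)$ and $(3)\Rightarrow(1)$, and both reduce to one principle: a Hodge isometry of \emph{Mukai lattices} restricts to a Hodge isometry of transcendental lattices, because for a K3 surface $S$ the transcendental lattice of the Mukai Hodge structure $\widetilde{H}(S,\Z)$ --- the orthogonal complement of the algebraic classes in it --- coincides with $T(S)$.

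To prove $(2)\Rightarrow(1)$, I would start from Orlov's representability theorem: an equivalence $\Phi\colon D^{b}(S)\xrightarrow{\sim}D^{b}(S')$ is a Fourier--Mukai transform with some kernel $\mathcal E\in D^{b}(S\times S')$, and its cohomological realization $\Phi^{H}_{\mathcal E}$ --- cup product with the Mukai vector $v(\mathcal E)$ followed by push-forward, with the customary $\sqrt{\mathrm{td}}$-normalization --- is, by Mukai and Orlov, a Hodge isometry of the full Mukai lattices $\widetilde{H}(S,\Z)\xrightarrow{\sim}\widetilde{H}(S',\Z)$. Since this isometry preserves the Hodge decomposition, it carries the algebraic sublattice of $\widetilde{H}(S,\Z)$ onto that of $\widetilde{H}(S',\Z)$, hence $T(S)$ onto $T(S')$; the restriction is a Hodge isometry $T(S)\cong_{\Hdg}T(S')$, so $S$ is induced by $T(S')$ in the sense of Definition~\ref{Def:inducedHK}.

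To prove $(3)\Rightarrow(1)$, I would write $S\cong M_{S'}(v)$ with $v\in\widetilde{H}(S',\Z)$ a primitive algebraic Mukai vector, $v^{2}=0$ (forced by $\dim S=2$); fineness of the moduli space is equivalent, by Mukai, to the existence of an algebraic class $v'$ with $\langle v,v'\rangle=1$, and after replacing $v'$ by $v'-\frac{1}{2}\langle v',v'\rangle\,v$ we may assume $\langle v',v'\rangle=0$, so that $U:=\langle v,v'\rangle$ is a hyperbolic plane sitting inside the algebraic part of $\widetilde{H}(S',\Z)$. As $U$ is unimodular, $\widetilde{H}(S',\Z)=U\oplus U^{\perp}$; the transcendental lattice $T(S')$ lies in $U^{\perp}$ (being orthogonal to all algebraic classes, in particular to $U$), and $v^{\perp}/v\cong U^{\perp}$ as Hodge structures, under which $T(S')\subseteq U^{\perp}$ is precisely the transcendental lattice of $v^{\perp}/v$. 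Mukai's Hodge isometry $H^{2}(S,\Z)\cong_{\Hdg}v^{\perp}/v$ then restricts to a Hodge isometry $T(S)\cong_{\Hdg}T(S')$, so $S$ is induced by $T(S')$.

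Together with the quoted implications $(1)\Rightarrow(2)$ and $(1)\Rightarrow(3)$, this gives the equivalence of the three statements. The genuine difficulty of the theorem, I expect, sits entirely inside the two results being quoted, and especially inside Mukai's $(1)\Rightarrow(3)$: its proof first extends an abstract Hodge isometry $T(S)\cong T(S')$ to a Hodge isometry of the Mukai lattices --- a lattice-theoretic step using Nikulin's existence and uniqueness results for primitive embeddings into the unimodular lattice $U^{\oplus 4}\oplus E_{8}^{\oplus 2}$, together with the presence of a hyperbolic plane in the relevant orthogonal complements (see Appendix~\ref{Sec:LatticeTheory}) --- and then realizes that isometry geometrically, which amounts to normalizing a Fourier--Mukai kernel into an honest algebraic family of Gieseker-stable sheaves on $S'$. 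This normalization is, to my mind, the real heart of the statement; everything around it is bookkeeping with Hodge structures and lattices.
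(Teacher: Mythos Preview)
Your outline is correct and follows the standard route found in the literature (e.g.\ \cite[Chapter~16]{Huybrechts2016}): quote Orlov and Mukai for $(1)\Rightarrow(2)$ and $(1)\Rightarrow(3)$, and close the loop by observing that both a derived equivalence and a fine two-dimensional moduli realization produce a Hodge isometry of Mukai lattices, which then restricts to the transcendental parts. The details you give for $(3)\Rightarrow(1)$---splitting off a hyperbolic plane $U=\langle v,v'\rangle$ inside the algebraic Mukai lattice and identifying $v^{\perp}/v$ with $U^{\perp}$---are the standard ones and are fine.

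That said, there is nothing to compare against: the paper does \emph{not} supply its own proof of this statement. Theorem~\ref{Thm:FourierMukaiPartners} is recorded as a known result, with the attribution made explicit in the text immediately preceding it (Orlov for the derived-category equivalence, Mukai for the moduli-space description) and no proof or sketch given. So your proposal is not an alternative to the paper's argument; it is simply a correct summary of the classical proof that the paper is citing.
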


\begin{Rem}
In the literature, two projectice K3 surfaces $S$ and $S'$ are said to be \emph{Fourier-Mukai partners} if any of the equivalent statements of Theorem \ref{Thm:FourierMukaiPartners} hold for $S$ and $S'$.  
\end{Rem}

Let $S$ be a K3 surface, then the set of K3 surfaces induced by $T(S)$ modulo isomorphism is finite \cite[Proposition 16.3.10]{Huybrechts2016}. We can generalize this result to the case of induced HK manifolds.

\begin{Prop}\label{Prop:finitenessinducedHKbir}
Let $(T,q)$ be a fixed Hodge structure of K3-type, and fix a lattice $\Lambda$; then the set 
\[
\{ \text{$X$ HK with $H^2(X,\Z)=\Lambda$, $X$ induced by $(T,q$} \}/\sim_{bir}
\]
is finite.
\end{Prop}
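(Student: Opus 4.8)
The plan is to combine Nikulin's finiteness for primitive lattice embeddings with the global Torelli theorem for hyper-K\"ahler manifolds, in four steps.

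\textbf{Step 1: only finitely many period points can occur.} If the set in the statement is nonempty, pick an $X$ in it. By Definition~\ref{Def:inducedHK} there is a Hodge isometry $\theta\colon (T,q)\xrightarrow{\sim}T(X)$, and $T(X)$ sits primitively inside $H^2(X,\Z)$; choosing a lattice isometry $H^2(X,\Z)\xrightarrow{\sim}\Lambda$ exhibits the lattice underlying $(T,q)$ as a primitive sublattice of $\Lambda$ (in particular it embeds, otherwise we are done). By Nikulin's theory of primitive embeddings (Appendix~\ref{Sec:LatticeTheory}) there are only finitely many primitive embeddings $T\hookrightarrow\Lambda$ up to the action of $O(\Lambda)$; fix representatives $\iota_1,\dots,\iota_m$. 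Each $\iota_a$ endows $\Lambda$ with a canonical Hodge structure of K3-type $H_a$ determined by $\Lambda^{2,0}_a=(\iota_a)_\C(T^{2,0})$, hence with a single period point $p_a$ in the period domain $\Omega_\Lambda$, and it also fixes the isometry class of $(\iota_a(T))^{\perp_\Lambda}$, the candidate N\'eron--Severi lattice.

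\textbf{Step 2: the marking can be made compatible.} After replacing the chosen isometry $H^2(X,\Z)\xrightarrow{\sim}\Lambda$ by its composite with a suitable element of $O(\Lambda)$, the primitive embedding $T(X)\hookrightarrow\Lambda$ becomes $\iota_a\circ\theta^{-1}$ for some index $a$. Since a K3-type Hodge structure on a lattice is determined by its $(2,0)$-line and that line lies in $(\iota_a(T))_\C$, the weight-two Hodge structure transported from $H^2(X,\Z)$ to $\Lambda$ is then exactly $H_a$. Thus every $X$ in our set admits a marking $\mu\colon H^2(X,\Z)\xrightarrow{\sim}\Lambda$ that is a Hodge isometry onto $(\Lambda,H_a)$ for one of the finitely many $a$; equivalently, $(X,\mu)$ is a point of the moduli space $\mathfrak M_\tau$ of $\Lambda$-marked hyper-K\"ahler manifolds of the deformation type $\tau$ of $X$, lying over one of the points $p_1,\dots,p_m$.

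\textbf{Step 3: Torelli collapses each fiber.} For a fixed deformation type $\tau$ whose Beauville--Bogomolov lattice is isometric to $\Lambda$, the moduli space $\mathfrak M_\tau$ has finitely many connected components, and by Verbitsky's global Torelli theorem together with Huybrechts' description of its non-separated points (see \cite{Huybrechts2012}) any two $\Lambda$-marked pairs lying in the same connected component and over the same period point define bimeromorphic manifolds. Hence $X\mapsto(\text{the component of }(X,\mu))$ induces an injection from the type-$\tau$ members of our set that lie over a fixed $p_a$ into $\pi_0(\mathfrak M_\tau)$; there are therefore at most $m\cdot\#\pi_0(\mathfrak M_\tau)$ such $X$ up to bimeromorphism, and since bimeromorphic projective hyper-K\"ahler manifolds are birational this bounds the contribution of $\tau$ to the set in the statement.

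\textbf{The main obstacle.} Summing over all deformation types with Beauville--Bogomolov lattice isometric to $\Lambda$ finishes the proof, provided there are only finitely many of them. This is the delicate point: for the lattices appearing in Table~\ref{Table:BBformknowndeformationtypes} it is automatic, since each one is realized by a single known deformation type (the situation relevant to the rest of the paper), but stated in full generality the argument would need a finiteness statement for deformation types of hyper-K\"ahler manifolds with fixed second Betti number. The genuine content of the argument is nonetheless Step~1: Nikulin's finiteness is what turns ``the transcendental Hodge structure is fixed'' into ``only finitely many period points arise,'' after which global Torelli does the rest.
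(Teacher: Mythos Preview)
Your argument is correct and follows essentially the same two-step strategy as the paper: (i) Nikulin's finiteness of primitive embeddings $T\hookrightarrow\Lambda$ up to $O(\Lambda)$, and (ii) a global Torelli statement to pass from finitely many period data to finitely many birational classes. The packaging differs slightly: the paper invokes Markman's Hodge-theoretic Torelli together with the fact that $\mathrm{Mon}(X)\subset O(\Lambda)$ has finite index (so finitely many $O(\Lambda)$-orbits become finitely many $\mathrm{Mon}(X)$-orbits, each determining $X$ up to birational equivalence), whereas you phrase the same content geometrically via Verbitsky's period map and the finiteness of $\pi_0(\mathfrak M_\tau)$. These are equivalent formulations. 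Your Step~1 is in fact slightly cleaner than the paper's, which talks about ``finitely many ways of extending the Hodge structure on $T(X)$ to $\Lambda_X$'' via the finite-index inclusion $T(X)\oplus\NS(X)\subset\Lambda_X$; as you observe, once the embedding is fixed the $(2,0)$-line already determines the Hodge structure uniquely, so there is exactly one extension.

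You are also right to flag the deformation-type issue: the paper's proof silently fixes $\mathrm{Mon}(X)$ and hence the deformation type, so it has the same gap you identify. In practice the statement is applied only to the known types in Table~\ref{Table:BBformknowndeformationtypes}, where the issue does not arise, but your explicit acknowledgement of the caveat is an improvement in precision, not a defect.
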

\begin{proof}
Let $X$ be a HK manifold induced by $(T,q)$ and let $\Lambda_X:=H^2(X,\Z)$ be the abstract lattice of $X$. Denote by $\mathrm{Mon}(X)$ the monodromy group of $X$. By \cite[Theorem 3.4]{Verbitsky2013} this is a finite index subgroup of $O(\Lambda_X)$. Consider the set $\mathrm{Emb}(T(X),\Lambda_X)$ of primitive embeddings of abstract lattices. The set of orbits of $\mathrm{Emb}(T(X),\Lambda_X)$ under the action of $\mathrm{Mon}(X)$ is finite. Under a fixed orbit, the lattice $T(X)\oplus \NS(X)$ is a sublattice of $H^2(X,\Z)$ of finite index: this implies that there exists finitely many ways of extending the Hodge structure on $T(X)$ to $\Lambda_X$, and any such determines $X$ up to birational isomorphisms by the Hodge theoretic Torelli theorem \cite[Theorem 1.3]{Markman2011}. 
\end{proof}
\begin{Cor}\label{Cor:finitenessinducedHKiso}
Let $(T,q)$ be a fixed Hodge structure of K3-type, and fix a lattice $\Lambda$ of rank at least 5. Then the set
\[
\{ \text{$X$ HK with $H^2(X, \Z)\cong \Lambda$ and induced by $(T,q)$} \}/\cong_{iso}
\]
is finite.
\end{Cor}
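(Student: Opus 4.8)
The plan is to upgrade Proposition~\ref{Prop:finitenessinducedHKbir} from bimeromorphic to isomorphism classes; the additional input is the known fact that a projective hyper-K\"ahler manifold has only \emph{finitely many bimeromorphic models up to isomorphism}, and this is the only place where the hypothesis $\rank(\Lambda)\ge 5$ is needed.

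First I would observe that it is harmless to replace ``$H^2(X,\Z)\cong\Lambda$'' by ``$H^2(X,\Z)=\Lambda$'': choosing for each such $X$ an isometry onto $\Lambda$ identifies the set of the corollary with the one of Proposition~\ref{Prop:finitenessinducedHKbir}, which is therefore finite modulo $\sim_{bir}$. I would then view this set as fibred over the finite quotient $\{X\}/{\sim_{bir}}$ by the tautological surjection from isomorphism classes. If $X$ and $X'$ are bimeromorphic HK manifolds in our set, Lemma~\ref{Lem:bimeromorphicisomorphiclattices} gives a Hodge isometry $H^2(X,\Z)\cong_{\Hdg}H^2(X',\Z)$; in particular $H^2(X',\Z)\cong\Lambda$ and $T(X')\cong_{\Hdg}T(X)\cong_{\Hdg}T$, so $X'$ is again induced by $(T,q)$, and conversely every HK manifold bimeromorphic to such an $X$ already lies in our set. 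Hence the fibre over the class of a fixed representative $X$ is precisely the set of bimeromorphic models of $X$, taken up to isomorphism.

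It then remains to bound this fibre. By Definition~\ref{Def:inducedHK} the manifold $X$, and hence every HK manifold bimeromorphic to it, is projective: a K\"ahler HK manifold is projective as soon as its N\'eron-Severi lattice contains a class of positive Beauville-Bogomolov square, and this is preserved by the Hodge isometry of Lemma~\ref{Lem:bimeromorphicisomorphiclattices}. When $b_2(X)=\rank(\Lambda)\ge 5$ we may invoke the global Torelli theorem \cite{Verbitsky2013}, the fact that $\operatorname{Mon}^2(X)$ has finite index in $O(H^2(X,\Z))$ (already used in the proof of Proposition~\ref{Prop:finitenessinducedHKbir}), and the decomposition of the movable cone of $X$ into K\"ahler-type chambers whose walls are governed by classes of bounded negative square: the bimeromorphic models of $X$ are then in bijection with the finitely many $\operatorname{Bir}(X)$-orbits of such chambers. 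Granting this finiteness of bimeromorphic models --- a by now standard consequence of the Torelli theorem and the structure of the movable cone, see \cite{Markman2011} and the references therein --- the corollary follows: finitely many classes mod $\sim_{bir}$ by Proposition~\ref{Prop:finitenessinducedHKbir}, each with a finite fibre.

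The crux is concentrated entirely in the last step: citing (or, for a self-contained treatment, reproving from the chamber structure of the movable cone) the finiteness of bimeromorphic models of a projective HK manifold, and recording that the ambient Torelli and cone-decomposition results are available exactly in the range $\rank(\Lambda)\ge 5$. Everything else is formal bookkeeping with Proposition~\ref{Prop:finitenessinducedHKbir} and Lemma~\ref{Lem:bimeromorphicisomorphiclattices}.
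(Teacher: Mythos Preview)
Your proof is correct and follows the same approach as the paper: reduce to finitely many birational classes via Proposition~\ref{Prop:finitenessinducedHKbir}, then use that each projective HK manifold has finitely many birational models up to isomorphism. The paper attributes this last step specifically to Amerik and Verbitsky's solution of the Kawamata--Morrison cone conjecture \cite{AmerikVerbitsky2020}, which is the precise reference for the finiteness of $\operatorname{Bir}(X)$-orbits of chambers that you describe (and is where the hypothesis $\rank(\Lambda)\ge 5$ enters).
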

\begin{proof}
By the assumption on the rank of $\Lambda$ it follows by Proposition \ref{Prop:finitenessinducedHKbir} and the positive solution by Amerik and Verbitsky to the Kawamata-Morrison Cone Conjecture for HK manifolds \cite{AmerikVerbitsky2020}.
\end{proof}

A result analogous to Theorem \ref{Thm:FourierMukaiPartners} for HK manifolds is at the moment out of reach: however, if we restrict to those belonging to the known deformation types, and which are moreover induced by a symplectic (abelian or K3) surface, some result can be indeed obtained. This will be done in the following sections.

\section{Induced HKs in Beauville's deformation families}

\subsection{Moduli of sheaves on symplectic surfaces}\label{SubSec:ModuliSheavesonSymplecticSurfaces}
In the following, $S$ will be a projective K3 or abelian surface. Let $\tH(S)$ denote the even cohomology ring, i.e.
\begin{equation}\label{eq:mukailattice}
\tH(S) := H^0(S,\Z)\oplus H^2(S,\Z)\oplus H^4(S,\Z).    
\end{equation}
We define a pure weight-two Hodge structure on $\tH(S)\otimes\C$ by requiring the degree 0 and 4 parts to be algebraic:
\[
\tH^{0,2}(S) := H^{0,2}(S), \quad \tH^{2,0}(S) := H^{2,0}(S), \quad \tH^{1,1}(S) := H^0(S)\oplus H^{1,1}(S)\oplus H^4(S).
\]
For any $v = (v_{0},v_{2},v_{4}) \in \tH(S)$, with degree $q$ component given by $v_q$, we set $v^\vee := (v_0, - v_2, v_4)$. On $\tH(S)$ we define \emph{Mukai's bilinear symmetric form} by
\begin{equation}\label{eq:mukaibilinearform}
\langle u,w \rangle := -\int_S u\wedge w^{\vee} = \int_S u_2\wedge w_2 - \int_S (u_0\wedge w_4 + u_4\wedge w_0).    
\end{equation}
\begin{Def}\label{Def:MukailatticeMukaivector}
The \emph{Mukai lattice} of $S$ is the free module $\tH(S)$ with Mukai's bilinear symmetric form $\langle\cdot,\cdot\rangle$. An element $v = (v_{0},v_{2},v_{4}) \in \tH(S)$ is called a \textit{Mukai vector} if $v_{0}\geq 0$ and $v_{2}\in \NS(S)$.
\end{Def}

As a lattice, $\tH(S)$ is isometric to the \emph{abstract Mukai lattice}

\begin{equation}
    \widetilde{\Lambda_S} :=
    \begin{cases}
    \widetilde{\Lambda_{\mathrm{K3}}} =\Lambda_{\mathrm{K3}}\oplus U & \text{if $S$ is a K3 surface}\\
    \widetilde{\Lambda_{\mathrm{Ab}}} = \Lambda_{\mathrm{Ab}}\oplus U & \text{if $S$ is an abelian surface}
    \end{cases}
\end{equation}
where $\Lambda_{\mathrm{K3}} = E_8^{\oplus 2}\oplus U^{\oplus 3}$ and $\Lambda_{\mathrm{Ab}} := U^{\oplus 3}$; notice that $\tH(S)$ is a Hodge structure of K3-type. 

Let $\mathscr{F}$ be a coherent sheaf on $S$. Define the \textit{Mukai vector} of $\mathscr{F}$ to be 
\begin{equation}
v(\mathscr{F}):=\mathrm{ch}(\mathscr{F})\sqrt{\mathrm{td}(S)}=(rk(\mathscr{F}), c_{1}(\mathscr{F}), \frac{1}{2}(c_1(\mathscr{F})^2-2c_2(\mathscr{F}))+\varepsilon rk(\mathscr{F})), 
\end{equation}
where the last equality is Hirzebruch-Riemann-Roch Theorem, with $\varepsilon=1$ if $S$ is K3, and $\varepsilon=0$ if $S$ is abelian. Notice that the Mukai vector of a coherent sheaf is indeed a Mukai vector in the sense of Definition \ref{Def:MukailatticeMukaivector}.

Let $H$ be a polarization and $v$ a Mukai vector on $S$. We write $M_{v}(S,H)$ (resp. $M_{v}^{s}(S,H)$) for the moduli space of $H$-semistable (resp. $H$-stable) sheaves on $S$ with Mukai vector $v$. If $S$ is abelian, a further construction is necessary: we define $K_{v}(S,H):=\mathrm{Alb}^{-1}(0)$, where $\mathrm{Alb}$ is the Albanese morphism, cf. \cite{Yoshioka01}.

\begin{Ex}\label{Ex:hilbertscheme}
Let $H$ be an ample divisor on $S$ and let $v:= (1,0,1-n)$. Then $S^{[n]}:= M_{v}(S,H)$ is the Hilbert scheme of length $n$ zero-dimensional subschemes on $S$. If $S$ is abelian then $K_{v}(S,H)$ is the generalized Kummer manifold associated with $S$.
\end{Ex}

The moduli spaces constructed above can be singular for two reasons: either the Mukai vector is non-primitive, or the polarization is not \emph{$v$-generic}. 
\begin{Def}\label{def:v-generic} Fix a Mukai vector $v\in \tH(S)$ and let $\mathrm{Amp}_\R(S)$ be the ample cone of $S$. A \emph{$v$-wall} is a hyperplane defined by $W_D:= D^{\perp}\cap\mathrm{Amp}_\R(S) $ where $D\in \NS(S)$ satisfies
\begin{equation}
    \frac{v_0^2}{4}(2v_0v_4 - (v_0-1)v_2\cdot v_2) < D\cdot D < 0.
\end{equation}
A polarization $H$ is called \emph{$v$-generic} if $H$ is not contained in any $v$-wall.\end{Def}
\begin{Rem}
For any chosen $v$ there exists a locally finite union of hyperplanes in $\NS(S)\otimes\mathbb R$, outside of which any polarization is $v$-generic.
\end{Rem}

\begin{Thm}\label{Thm:vprimitivo}[Mukai, O'Grady, Yoshioka] Let $S$ be an abelian or projective K3 surface, $v$ a primitive Mukai vector and $H$ a $v$-generic polarization. Then $M_{v}(S,H)=M_{v}^{s}(S,H)$, and we have the following results:
\begin{enumerate} 
\item if $S$ is a K3 surface and $\langle v,v \rangle\geq 2$, then $M_{v}(S,H)$ is an irreducible symplectic variety of dimension $2n=\langle v,v \rangle+2$, which is deformation equivalent to $S^{[n]}$, the Hilbert scheme of $n$ points on $S$. Moreover, there is a Hodge isometry between $v^{\perp}$ and $H^{2}(M_{v}(S,H),\mathbb{Z})$, where the latter has a lattice structure given by the Beauville-Bogomolov form;
\item if $S$ is abelian and $\langle v,v \rangle\geq 6$, then $K_{v}(S,H)$ is an irreducible symplectic variety of dimension $2n=\langle v,v \rangle-2$, which is deformation equivalent to $K^{n}(S)$, the generalized Kummer manifold on $S$, and there is a Hodge isometry between $v^{\perp}$ and $H^{2}(K_{v}(S,H),\mathbb{Z})$.
\end{enumerate}
In particular, $T((M_{v}(S,H)) \cong_\Hdg T(S)$ and $T((K_{v}(S,H)) \cong_\Hdg T(S)$ as pure Hodge structures of weight 2.
\end{Thm}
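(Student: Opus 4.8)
The plan is to deduce the final assertion formally from the Hodge isometry $v^{\perp}\cong_{\Hdg}H^{2}(M_v(S,H),\Z)$ (resp. $v^{\perp}\cong_{\Hdg}H^{2}(K_v(S,H),\Z)$) established in the body of the theorem, by identifying the transcendental part of the Hodge structure $v^{\perp}\subseteq\tH(S)$ with $T(S)$. Write $X:=M_v(S,H)$, the argument for $K_v(S,H)$ being word for word the same. First I would record that for any Hodge structure of K3-type $V$ with a Beauville--Bogomolov form, the transcendental lattice is intrinsically determined by the pair (lattice, Hodge structure): it is the orthogonal complement of the algebraic part $V^{1,1}\cap V$, equivalently the smallest primitive sublattice $T\subseteq V$ with $V^{2,0}\subseteq T\otimes\C$. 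In particular this notion is preserved by any Hodge isometry, so a Hodge isometry $v^{\perp}\cong_{\Hdg}H^{2}(X,\Z)$ carries the transcendental part of $v^{\perp}$ onto the transcendental part of $H^{2}(X,\Z)$, which by definition is $T(X)$. Hence it suffices to compute the transcendental part of $v^{\perp}$.

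By the definition of the weight-two Hodge structure on $\tH(S)$, the summands $H^{0}(S,\Z)$ and $H^{4}(S,\Z)$ are of type $(1,1)$ and $\tH^{2,0}(S)=H^{2,0}(S)$. Therefore the algebraic part of $\tH(S)$ is $H^{0}(S,\Z)\oplus\NS(S)\oplus H^{4}(S,\Z)$, and since $H^{0}(S,\Z)\oplus H^{4}(S,\Z)$ is a hyperbolic plane $U$ orthogonal to $H^{2}(S,\Z)$ for the Mukai form, its orthogonal complement in $\tH(S)$ is $\NS(S)^{\perp_{H^{2}(S,\Z)}}=T(S)$. Thus the transcendental lattice of $\tH(S)$ is exactly $T(S)$.

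Now, since $v=(v_{0},v_{2},v_{4})$ is a Mukai vector we have $v_{2}\in\NS(S)$, so $v$ lies in the algebraic part $H^{0}(S,\Z)\oplus\NS(S)\oplus H^{4}(S,\Z)$ of $\tH(S)$; consequently $T(S)\subseteq v^{\perp}$, and $T(S)$ remains primitive in $v^{\perp}$. Moreover $\langle v,v\rangle\neq 0$ under our hypotheses, so $v^{\perp}$ is a non-degenerate lattice. Because $v$ is algebraic, the induced Hodge structure on $v^{\perp}$ still has $(2,0)$-part equal to $H^{2,0}(S)\subseteq T(S)\otimes\C$; and the algebraic part of $v^{\perp}$ is $v^{\perp}\cap\big(H^{0}(S,\Z)\oplus\NS(S)\oplus H^{4}(S,\Z)\big)$, whose orthogonal complement inside $v^{\perp}$ is, by a direct computation using the orthogonal decomposition $\tH(S)\otimes\Q=\big(T(S)\otimes\Q\big)\oplus\big((H^{0}\oplus\NS(S)\oplus H^{4})\otimes\Q\big)$ together with $\langle v,v\rangle\neq 0$, precisely $T(S)$. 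Hence $T(S)$ is the transcendental lattice of $v^{\perp}$, and combining with the first paragraph we get $T(X)\cong_{\Hdg}T(S)$ as pure weight-two Hodge structures.

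As for the difficulty: there is essentially no analytic or geometric obstacle remaining, since the whole substance of the statement is the construction of the Hodge isometry $v^{\perp}\cong_{\Hdg}H^{2}(X,\Z)$ due to Mukai, O'Grady and Yoshioka, which we are allowed to invoke. The only genuine point requiring care is the lattice-theoretic bookkeeping above: verifying that the transcendental lattice is intrinsic to the pair (lattice, Hodge structure) and is therefore transported along any Hodge isometry, and checking that restricting from $\tH(S)$ to $v^{\perp}$ does not enlarge the algebraic part beyond $v^{\perp}\cap(H^{0}\oplus\NS(S)\oplus H^{4})$ — which is exactly the step where the hypothesis that $v$ is a Mukai vector (so $v_{2}\in\NS(S)$, i.e. $v$ is algebraic) enters.
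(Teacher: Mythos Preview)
The paper does not supply a proof of this theorem: it is stated as a result of Mukai, O'Grady and Yoshioka, with the final sentence (``In particular, $T(M_v(S,H))\cong_{\Hdg}T(S)$ \ldots'') recorded as an immediate consequence of the cited Hodge isometry $v^{\perp}\cong_{\Hdg}H^2$. Your proposal makes precisely this deduction explicit, and the argument is correct: since $v$ is algebraic in $\tH(S)$ and $\langle v,v\rangle\neq 0$, the transcendental part of $v^{\perp}$ coincides with $T(S)$, and a Hodge isometry transports transcendental lattices. This is exactly the reading the paper intends and uses later (e.g.\ in the proof of Theorem~\ref{Thm:InducedK3n}), so there is nothing to compare --- your write-up simply spells out what the paper leaves implicit.
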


\subsection{Induced HKs of \texorpdfstring{K3$^{[n]}$}{K3n}-type}\label{sec:inducedK3type}

Let $X$ be a projective HK manifold of K3$^{[n]}$-type and let $\Lambda_{\text{K3}^{[n]}}$ be the abstract K3$^{[n]}$ lattice. Markman constructed in \cite{Markman2010} a natural $O(\widetilde{\Lambda_{\mathrm{K3}}})$-orbit $i_X$ of primitive isometric embeddings of $H^2(X,\Z)$ in $\widetilde{\Lambda_{\mathrm{K3}}}$ (see the definitions \ref{embedding}, \ref{isometric_embeddings}). This allowed him to prove the following.

\begin{Thm}[\cite{Markman2011}, Corollary 9.9]\label{Thm:MarkmanTorelli}
Let $X$ and $Y$ be two manifolds of $K3^{[n]}$-type. Then $X$ and $Y$ are bimeromorphic if and only if there exists a Hodge-isometry $f:H^2(X,\Z)\to H^2(Y,\Z)$, satisfying $i_X = i_Y\circ f$.
\end{Thm}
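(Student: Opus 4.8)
The plan is to deduce the statement from two deep inputs: Verbitsky's global Torelli theorem for hyper-Kähler manifolds, in the form surveyed in \cite{Huybrechts2012}, and Markman's analysis \cite{Markman2010} of the monodromy group $\mathrm{Mon}^2(X)$ of a manifold of $K3^{[n]}$-type together with the canonical orbit $i_X$ of Mukai-lattice embeddings recalled above. The bridge between them is the assertion that a Hodge isometry $f\colon H^2(X,\Z)\to H^2(Y,\Z)$ satisfies $i_X=i_Y\circ f$ \emph{if and only if} $f$ is a parallel transport operator, i.e.\ an isomorphism induced by parallel transport in the local system $R^2\pi_*\Z$ along a path in the base of a smooth proper family $\pi$ of $K3^{[n]}$-type manifolds connecting $X$ to $Y$. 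Granting this, the theorem reads: $X$ and $Y$ are bimeromorphic $\iff$ there is a parallel transport operator $H^2(X,\Z)\to H^2(Y,\Z)$ which is moreover a Hodge isometry; unwinding periods and markings, this last equivalence is precisely Verbitsky's Torelli theorem for this deformation type. So the task splits into recalling why ``parallel transport $=$ compatible with $i_X$'' and matching the two implications.

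For ``$\Rightarrow$'': if $X$ and $Y$ are bimeromorphic, Lemma~\ref{Lem:bimeromorphicisomorphiclattices} already produces a Hodge isometry $f\colon H^2(X,\Z)\to H^2(Y,\Z)$, built from a common resolution of the graph of the birational map. One checks that $f$ is in fact a parallel transport operator: a birational map of hyper-Kähler manifolds of the same deformation type deforms together with the two manifolds over a disk, isomorphic away from $0$, so that $f$ factors as parallel transport $\circ$ isomorphism $\circ$ parallel transport (Huybrechts' argument, see \cite{Huybrechts2012}). Since $i_X$ is, by construction, obtained by transporting the tautological embedding $v^{\perp}\hookrightarrow\tH(S)$ attached to a moduli space $M_v(S,H)$ as in Theorem~\ref{Thm:vprimitivo} — the orbit being independent of the chosen identification $\tH(S)\cong\widetilde{\Lambda_{\mathrm{K3}}}$ — it is invariant under parallel transport operators, and hence $i_X=i_Y\circ f$.

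For ``$\Leftarrow$'': let $f$ be a Hodge isometry with $i_X=i_Y\circ f$. Fix any parallel transport operator $f_0\colon H^2(X,\Z)\to H^2(Y,\Z)$ (one exists since $X$ and $Y$ are of the same deformation type). By the previous paragraph $i_X=i_Y\circ f_0$, so $g:=f_0^{-1}\circ f$ is an isometry of $H^2(X,\Z)$ with $i_X\circ g=i_X$; hence by Markman's identification $\mathrm{Mon}^2(X)=\{h\in O(H^2(X,\Z)):i_X\circ h=i_X\}$ we get $g\in\mathrm{Mon}^2(X)$, and therefore $f=f_0\circ g$ is again a parallel transport operator (and still a Hodge isometry). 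Now pick a marking $\eta\colon H^2(X,\Z)\xrightarrow{\sim}\Lambda_{\mathrm{K3}^{[n]}}$: then $(X,\eta)$ and $(Y,\eta\circ f^{-1})$ lie in the same connected component of the marked moduli space — this is exactly what being a parallel transport operator records — and they have the same period point, since $f$ preserves $H^{2,0}$. By Verbitsky's global Torelli theorem the fibre of the period map over a point of a fixed component is a single bimeromorphic equivalence class, so $X$ and $Y$ are bimeromorphic.

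The part I would import rather than reprove — and the real obstacle — is precisely the two facts of Markman used above: the explicit determination of $\mathrm{Mon}^2(X)$ for $K3^{[n]}$-type (the orientation-preserving isometries of $\Lambda_{\mathrm{K3}^{[n]}}$ whose action on the discriminant group $\Z/2(n-1)$ is $\pm\mathrm{id}$) and its identification with the stabilizer of $i_X$, which also absorbs the orientation bookkeeping via the positive cone of the signature $(4,20)$ lattice $\widetilde{\Lambda_{\mathrm{K3}}}$. Both are obtained by deforming $X$ to moduli spaces $M_v(S,H)$ as in Theorem~\ref{Thm:vprimitivo}, exhibiting enough explicit monodromy operators there — reflections in $(-2)$-classes and in suitable further classes, and operators on $S^{[n]}$ induced by auto-equivalences of $D^b(S)$ and by diffeomorphisms of $S$ — to generate the asserted group, and checking that each generator lifts to an isometry of the Mukai lattice fixing the relevant embedding. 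Underneath everything lies Verbitsky's Torelli theorem itself, whose proof via twistor families and the ergodicity of the mapping class group action on Teichmüller space is the genuinely hard analytic ingredient; I would cite it through \cite{Huybrechts2012}.
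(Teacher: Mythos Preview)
The paper does not prove this statement; it is quoted verbatim as \cite[Corollary~9.9]{Markman2011} and used as a black box. Your sketch is essentially the argument Markman gives there: combine the Hodge-theoretic Torelli theorem \cite[Theorem~1.3]{Markman2011} (which packages Verbitsky's result as ``bimeromorphic $\iff$ there is a parallel-transport Hodge isometry'') with the monodromy computation of \cite{Markman2010} identifying $\mathrm{Mon}^2(X)$ for the $K3^{[n]}$ type, so that ``parallel transport'' becomes the lattice-theoretic condition $i_X=i_Y\circ f$.

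One small inaccuracy worth fixing: your claimed equality $\mathrm{Mon}^2(X)=\{h\in O(H^2(X,\Z)):i_X\circ h=i_X\}$ is off by orientation. Since $i_X$ is an $O(\widetilde{\Lambda_{\mathrm{K3}}})$-orbit, its stabiliser in $O(H^2(X,\Z))$ is the full $\widehat{O}(H^2(X,\Z))$ of isometries acting by $\pm1$ on the discriminant, whereas $\mathrm{Mon}^2(X)=\widehat{O}^+(H^2(X,\Z))$ is the orientation-preserving half. So your $g=f_0^{-1}\circ f$ lands a priori only in $\widehat{O}$, not in $\mathrm{Mon}^2(X)$. The repair is immediate and standard: since $b_2=23$ is odd, $-\mathrm{id}$ is orientation-reversing and lies in $\widehat{O}$, so $\widehat{O}/\widehat{O}^+=\{\pm\mathrm{id}\}$; if $f$ happens not to be orientation-preserving, replace it by $-f$, which is still a Hodge isometry and still satisfies $i_X=i_Y\circ(-f)$ because $-\mathrm{id}\in O(\widetilde{\Lambda_{\mathrm{K3}}})$. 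You allude to this in your final paragraph, but the body of the argument would be cleaner with it made explicit.
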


In particular, this gives a criterion to check if a HK manifold of K3$^{[n]}$-type is birational to a moduli space of sheaves on a K3 surface, which we will apply to the HKs of K3$^{[n]}$-type that are induced by a K3 surface.

Denote by $\mathrm{Emb}(\Lambda_{\text{K3}^{[n]}},\widetilde{\Lambda_{\mathrm{K3}}})$ the set of isometric embeddings of the lattice $\Lambda_{\text{K3}^{[n]}}$ in $\widetilde{\Lambda_{\mathrm{K3}}}$. In \cite[Lemma 4.3.(i)]{Markman2010} Markman establishes a bijective correspondence between the set
\[
P_n = \{ (r,s)\in\Z^2 \,\,\,\text{coprime such that}\,\,\, -s\geq r > 0\,\,\,\text{and}\,\,\, -rs = n-1\}
\]
and the set of $O(\widetilde{\Lambda_{\mathrm{K3}}})$-orbits in  $\mathrm{Emb}(\Lambda_{\text{K3}^{[n]}},\widetilde{\Lambda_{\mathrm{K3}}})$. This correspondence is given by assigning to each pair $(r,s)$ the embedding $(r,0,s)^\perp$ in $\widetilde{\Lambda_{\mathrm{K3}}}$. The following theorem is essentially proved in \cite{Markman2010}, for convenience we give the proof.

\begin{Thm}\label{Thm:InducedK3n}
Let $X$ be a projective HK of $K3^{[n]}$-type and $S$ a projective K3 surface, then $X$ is induced by $T(S)$ if and only if $X$ is birational to a moduli space $M_v(S,H)$ for some $v\in \tH(S)$ and a $v$-generic polarization $H$.
\end{Thm}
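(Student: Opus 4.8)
The plan is to prove the two implications separately. \emph{Birational to a moduli space $\Longrightarrow$ induced}: this direction is essentially formal. Suppose $X$ is birational to $M:=M_v(S,H)$ for some primitive Mukai vector $v$ and some $v$-generic polarization $H$. Since $X$, hence $M$, has dimension $2n\ge 4$, the Mukai vector satisfies $\langle v,v\rangle=2n-2\ge 2$, so part (1) of Theorem \ref{Thm:vprimitivo} yields a Hodge isometry $T(M)\cong_{\Hdg}T(S)$; combining it with the Hodge isometry $T(X)\cong_{\Hdg}T(M)$ of Lemma \ref{Lem:bimeromorphicisomorphiclattices} shows that $X$ is induced by $T(S)$. (The case $n=1$ is exactly Theorem \ref{Thm:FourierMukaiPartners}, so from here on we assume $n\ge 2$.)

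\emph{Induced $\Longrightarrow$ birational to a moduli space}: fix a Hodge isometry $\phi\colon T(S)\xrightarrow{\sim}T(X)$ and a representative $\iota\colon H^2(X,\Z)\hookrightarrow\widetilde{\Lambda_{\mathrm{K3}}}$ of Markman's orbit $i_X$, whose rank-one orthogonal complement is spanned by a primitive class $w$ with $w^2=2(n-1)$. The first step is to transport the weight-two Hodge structure of $X$ along $\iota$, i.e.\ to declare the line $\iota(H^{2,0}(X))$ to be the $(2,0)$-part of $\widetilde{\Lambda_{\mathrm{K3}}}$. Since this line lies inside $\iota(T(X))\otimes\C$ while $w$ is orthogonal to $\iota(H^2(X,\C))$, the result is a K3-type Hodge structure $\widetilde{\Lambda}$ on $\widetilde{\Lambda_{\mathrm{K3}}}$ in which $w$ has type $(1,1)$ and whose transcendental lattice is exactly $\iota(T(X))$. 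The second, decisive, step is to promote the isometry $\iota\circ\phi\colon T(S)\to\iota(T(X))$ to a \emph{Hodge} isometry $g\colon\tH(S)\to\widetilde{\Lambda}$ of the full Mukai lattices. Granting this, put $v:=g^{-1}(w)\in\tH(S)$; being a primitive class of type $(1,1)$, it lies in $H^0(S)\oplus\NS(S)\oplus H^4(S)$, and after replacing it by $-v$ if necessary (using Riemann--Roch on $S$ to handle the rank-zero case) it becomes a primitive Mukai vector with $\langle v,v\rangle=w^2=2(n-1)\ge 2$. Choose a $v$-generic polarization $H$, available by the Remark following Definition \ref{def:v-generic}, and set $M:=M_v(S,H)$.

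The promotion step is the one I expect to be the main obstacle, and it is exactly where unimodularity of the Mukai lattice is indispensable. The lattice $T(S)$ embeds primitively into $\widetilde{\Lambda_{\mathrm{K3}}}$ -- an even \emph{unimodular} lattice of signature $(4,20)$ -- in two ways: tautologically inside $\tH(S)$, and via $\iota\circ\phi$ inside $\widetilde{\Lambda}$. Since $S$ is projective, $T(S)$ has signature $(2,20-\rank\NS(S))$ with $\rank\NS(S)\ge 1$, so the hypotheses of Nikulin's uniqueness theorem for primitive embeddings into even unimodular lattices are satisfied (see Appendix \ref{Sec:LatticeTheory}); hence the two embeddings lie in a single $O(\widetilde{\Lambda_{\mathrm{K3}}})$-orbit, and any element realizing this equality gives a lattice isometry $g$ with $g|_{T(S)}=\iota\circ\phi$. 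That $g$ is automatically a Hodge isometry is then formal: a K3-type Hodge structure is determined by the position of its $(2,0)$-line inside the complexification of its transcendental lattice, and by construction $g$ carries $\tH^{2,0}(S)$ onto $\iota(H^{2,0}(X))=\widetilde{\Lambda}^{2,0}$. This is precisely the point that breaks down for the non-unimodular Beauville-Bogomolov lattices of the other deformation types, which is why those cases require the finer analysis of Section 4.

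Finally I would compare $X$ with $M$ through Markman's Torelli theorem. By part (1) of Theorem \ref{Thm:vprimitivo}, $M$ is of $K3^{[n]}$-type and comes with a Hodge isometry $\theta\colon v^{\perp}\xrightarrow{\sim}H^2(M,\Z)$, where $v^{\perp}$ is taken inside $\tH(S)$; moreover, by \cite{Markman2010}, using $g$ as the identification $\tH(S)\cong\widetilde{\Lambda_{\mathrm{K3}}}$, the orbit $i_M$ is represented by the embedding $g\circ\theta^{-1}\colon H^2(M,\Z)\to\widetilde{\Lambda_{\mathrm{K3}}}$. On the other hand $g$ restricts to a Hodge isometry $v^{\perp}\to w^{\perp}=\iota(H^2(X,\Z))$, so $f:=\iota^{-1}\circ g\circ\theta^{-1}\colon H^2(M,\Z)\to H^2(X,\Z)$ is a Hodge isometry, and $\iota\circ f=g\circ\theta^{-1}$ is exactly the representative of $i_M$ above; thus $i_M=i_X\circ f$. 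Theorem \ref{Thm:MarkmanTorelli} then produces a bimeromorphism between $X$ and $M$, which is a birational map since both manifolds are projective.
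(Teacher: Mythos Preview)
Your proof is correct and follows the same route as the paper: use Markman's embedding $i_X$, invoke uniqueness of the primitive embedding $T(S)\hookrightarrow\widetilde{\Lambda_{\mathrm{K3}}}$ to extend the Hodge isometry on transcendental lattices to the full Mukai lattice, read off a Mukai vector $v$ as the orthogonal complement, and conclude via Theorem \ref{Thm:MarkmanTorelli}. The one refinement worth noting is that the paper secures the Nikulin uniqueness step by observing that $T(S)^\perp\subset\tH(S)$ contains the hyperbolic plane $H^0(S)\oplus H^4(S)$; your direct appeal to the numerical hypotheses of Theorem \ref{Nik_1.14.4} can fail at the boundary case $\rho=\ell((A_{T(S)})_2)$ (e.g.\ $\NS(S)=\langle 2\rangle$, where $q_{T(S)}$ lives on $\Z/2\Z$ and cannot split off $u_2$ or $v_2$), whereas the presence of a $U$-summand in the complement forces uniqueness-in-genus and surjectivity onto $O(q)$ unconditionally.
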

\begin{proof}
To prove the implication from right to left notice that by Theorem \ref{Thm:vprimitivo} for any Mukai vector $v$ and $v$-generic polarization $H$ on $S$  we have that $T(M_v(S,H)) \cong_\Hdg T(S)$. By Lemma \ref{Lem:bimeromorphicisomorphiclattices}, if $X$ is birational to $M_v(S,H)$, then $T(X)\cong_\Hdg T(M_v(S,H))$ and we are done. 

For the other implication, suppose that $T(X)\cong T(S)$ and consider the embedding
\[
\iota: H^2(S^{[n]},\Z)\to \tH(S)
\]
given as $(1,0,1-n)^\perp$ as in Example \ref{Ex:hilbertscheme}. By construction we have that $T(S)^\perp$ in $\tH(S)$ contains the hyperbolic plane $H^0(S)\oplus H^4(S)$, hence by Theorem \ref{Nik_1.14.4} we know that this embedding is unique; therefore there exists an isometry $\phi: \widetilde{\Lambda} \to \tH(S)$ such that $\phi(\iota_X(T(X))) = T(S)$.

Set $\Lambda = \phi(H^2(X))$ and let $v := \Lambda^\perp$, this is a primitive vector that takes values in $\tH(S)_{\mathrm{alg}}$. Possibly by changing sign, we can assume that $v$ is a Mukai vector, hence by Theorem \ref{Thm:vprimitivo} for any $v$-generic ample class  $H$ in $S$  we have $v^\perp \cong H^2(M_v(S,H),\Z)$ as Hodge structures: this means that $\phi$ restricted to $H^2(X,\Z)$ is in fact an isometry of Hodge structures that, by construction, extends to the Mukai lattice. By Theorem \ref{Thm:MarkmanTorelli} we conclude that $X$ and $M_v(S,H)$ are birational.
\end{proof}

\begin{Rem}
In the previous section we claimed that $X$ \emph{is} a moduli space and not just birational to one. Using \cite[Theorem 1.2]{BayerMacri2014}, when $X$ is birational to a moduli space of semistable sheaves, then there exists a Bridgeland stability condition $\sigma$ such that $X$ is \emph{isomorphic} to the moduli space of $\sigma$-stable sheaves.
\end{Rem}

\begin{Cor}\label{Cor:UniquenessInducedK3}
Every HK manifold of K3$^{[n]}$-type with Picard rank $\geq 13$ is induced by a unique K3 surface.
\end{Cor}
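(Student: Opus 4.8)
The plan is to split the statement into an existence part and a uniqueness part, each of which reduces to Nikulin's lattice theory together with the Torelli theorem for K3 surfaces once one records the controlling numerical fact. Since $b_2(X)=23$, a Picard rank $\rho(X)\ge 13$ forces $\rank(T(X))=23-\rho(X)\le 10$, and $T(X)$ is then an even lattice of signature $(2,\,21-\rho(X))$ carrying a Hodge structure of K3-type, with $\ell(A_{T(X)})\le\rank(T(X))\le 10$.

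\emph{Existence.} I would first realize $T(X)$ as the transcendental lattice of a projective K3 surface. Because $22-\rank(T(X))\ge\ell(A_{T(X)})+2$, Nikulin's criterion (Theorem \ref{Nik_1.14.4}; see Appendix \ref{Sec:LatticeTheory}) yields a primitive embedding $T(X)\hookrightarrow\Lambda_{\mathrm{K3}}=U^{\oplus 3}\oplus E_8^{\oplus 2}$, unique up to $O(\Lambda_{\mathrm{K3}})$, whose orthogonal complement $N$ has signature $(1,\,21-\rank(T(X)))$. Transport the Hodge structure of $T(X)$ to $\Lambda_{\mathrm{K3}}$ by declaring $N$ to be of type $(1,1)$: the line $\C\sigma_X\subseteq\Lambda_{\mathrm{K3}}\otimes\C$ is isotropic with $(\sigma_X,\overline{\sigma_X})>0$, hence a valid K3 period, so by surjectivity of the period map there is a K3 surface $S$ and an isometry $H^2(S,\Z)\cong\Lambda_{\mathrm{K3}}$ identifying the Hodge structures. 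Since $T(X)$ contains no nonzero integral $(1,1)$-class and is irreducible as a $\Q$-Hodge structure, one checks $\NS(S)=N$, whence $T(S)\cong_\Hdg T(X)$; and as $N$ carries a class of positive square, $S$ is projective. Thus $X$ is induced by $S$ in the sense of Definition \ref{Def:inducedHK}, and Theorem \ref{Thm:InducedK3n} then even shows that $X$ is birational to a moduli space of sheaves on $S$.

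\emph{Uniqueness.} Let $S'$ be another K3 surface inducing $X$. Then $T(S)\cong_\Hdg T(S')$, so $S$ is induced by $T(S')$ and Theorem \ref{Thm:FourierMukaiPartners} (Mukai, Orlov) exhibits $S$ and $S'$ as Fourier-Mukai partners. But $\rho(S)=22-\rank(T(S))=22-\rank(T(X))\ge 12$, and a K3 surface of Picard rank at least $12$ has no Fourier-Mukai partner other than itself --- a consequence of the uniqueness part of Nikulin's theorem applied to the rank-$\le 10$ lattice $T(S)$, cf.\ \cite[\S 16]{Huybrechts2016}. Hence $S\cong S'$. (Equivalently and more self-containedly: the Hodge isometry $T(S)\xrightarrow{\sim}T(S')$ extends to a Hodge isometry $H^2(S,\Z)\xrightarrow{\sim}H^2(S',\Z)$, since a rank-$\le 10$ lattice embeds into $\Lambda_{\mathrm{K3}}$ in an essentially unique way, and one then invokes the weak Torelli theorem.)

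Almost everything here is a citation of standard material: Nikulin's embedding theory, surjectivity of the K3 period map, the Torelli theorem, and Theorems \ref{Thm:InducedK3n} and \ref{Thm:FourierMukaiPartners}. The one point that genuinely needs care is the verification of Nikulin's numerical hypotheses, and this is exactly what $\rank(T(X))\le 10$ provides through $\ell(A_{T(X)})\le\rank(T(X))$; it is also why the bound $\rho(X)\ge 13$ --- equivalently $\rho(S)\ge 12$ --- is the natural threshold, below which a K3 surface can acquire nontrivial Fourier-Mukai partners and the embedding of $T(X)$ into $\Lambda_{\mathrm{K3}}$ need no longer be unique. In a full write-up, the mildly delicate step is checking that the surface produced by the period map has N\'eron-Severi lattice exactly $N$, so that $T(S)$ is honestly $T(X)$ and nothing larger; this rests on $T(X)=\NS(X)^{\perp}$ being $\Q$-irreducible, which is immediate from the Lefschetz $(1,1)$-theorem.
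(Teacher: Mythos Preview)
Your argument is correct and follows the same route as the paper: both reduce to the fact that an even lattice of signature $(2,k)$ with $k\le 8$ (hence rank $\le 10\le 12$) admits a primitive embedding into $\Lambda_{\mathrm{K3}}$ unique up to $O(\Lambda_{\mathrm{K3}})$, then invoke surjectivity of the period map and Torelli to produce and pin down the K3 surface. The paper simply packages this as a citation of Morrison \cite[Corollary~2.10]{Morrison84}, whereas you unpack the Nikulin-theoretic content explicitly; the appeal to Theorem~\ref{Thm:InducedK3n} is in both cases a pleasant strengthening rather than a logical necessity for the corollary as stated.
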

\begin{proof}
A result of Morrison \cite[Corollary 2.10]{Morrison84} gives that for every Hodge Structure $T$ of K3-type with $rk(T)\leq 12$ there exists a unique K3 surface $S$ such that $T(S) \cong_\Hdg T$. The result follows by Theorem \ref{Thm:InducedK3n}.
\end{proof}

\subsection{Induced HKs of \texorpdfstring{Kum$^n$}{Kumn}-type}\label{sec:inducedKummertype}

The strategy of proof given in the previous section has an analogue for the Kum$^n$ deformation type. Let $X$ be a projective HK manifold of Kum$^n$-type and let $\Lambda_{\text{Kum}^{n}}$ be the abstract Kum$^n$ lattice.

\begin{Thm}\cite[Theorem 4.3]{Mongardi16}\label{Thm:monodromymongardi}
Let $\mathcal{W}(\Lambda_{\text{Kum}^{n}})$ denote the subgroup of $O^+(\Lambda_{\text{Kum}^{n}})$ consisting of orientation preserving isometries acting as $\pm 1$ on the discriminant $\Lambda_{\text{Kum}^{n}}^\vee/\Lambda_{\text{Kum}^{n}}$. Denote by
\[
\chi:\mathcal{W}(\Lambda_{\text{Kum}^{n}})\to \{1,-1\} 
\]
the associated character. Then $\mathrm{Mon}^2(X)$ consists precisely of orientation preserving isometries $g\in \mathcal{W}(\Lambda_{\text{Kum}^{n}})$ such that $\chi(g)\cdot\det(g) = 1$.
\end{Thm}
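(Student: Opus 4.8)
\medskip
\noindent\textbf{Proof proposal.}
Since $\mathrm{Mon}^2$ is a deformation invariant it suffices to compute it for one convenient model, say a generalized Kummer variety $X=K^n(A)$ over an abelian surface $A$, and then to transport the answer along a marking $H^2(X,\Z)\xrightarrow{\sim}\Lambda_{\text{Kum}^{n}}$. We prove the two inclusions separately. For ``$\subseteq$'': by Markman's general orientation result $\mathrm{Mon}^2(X)\subseteq O^+(\Lambda_{\text{Kum}^{n}})$, so it remains to control the action on the discriminant group and then to extract the determinant constraint. The discriminant is handled as in Markman's $\mathrm{K3}^{[n]}$ theory: realizing $X\simeq K_v(A,H)$ for a primitive Mukai vector $v$ with $\langle v,v\rangle=2(n+1)$ (Theorem \ref{Thm:vprimitivo}) gives a primitive embedding $H^2(X,\Z)\hookrightarrow\widetilde{\Lambda_{\mathrm{Ab}}}=U^{\oplus 4}$ with image $v^\perp$; one shows, exactly as in the K3 case, that every monodromy operator extends to an isometry of $\widetilde{\Lambda_{\mathrm{Ab}}}$ preserving $\Z v$ up to sign, so --- $\widetilde{\Lambda_{\mathrm{Ab}}}$ being unimodular --- it acts as $\pm\mathrm{id}$ on $\Lambda_{\text{Kum}^{n}}^\vee/\Lambda_{\text{Kum}^{n}}\cong\Z/2(n+1)\Z$. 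Hence $\mathrm{Mon}^2(X)\subseteq\mathcal W(\Lambda_{\text{Kum}^{n}})$.

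It remains to see that $\det(g)=\chi(g)$ for every $g\in\mathrm{Mon}^2(X)$; this is the feature genuinely special to the Kummer case, and I expect it to be the main obstacle. Unlike $S^{[n]}$, the manifold $K^n(A)$ has non-trivial odd cohomology: $H^3(X,\Z)$ is, up to isogeny, $H^1(A,\Z)\oplus H^3(A,\Z)$, carrying a weight-three Hodge structure. A monodromy operator is induced by a diffeomorphism of $X$ and hence acts on all of $H^\bullet(X,\Z)$ compatibly with the cup product and with the Looijenga--Lunts--Verbitsky structure. Chasing this compatibility through the identification $H^2(A)=\Lambda^2 H^1(A)$ and the orthogonal splitting $H^2(X,\Z)=H^2(A,\Z)\oplus\Z\delta$ (with $\delta^2=-2(n+1)$) should pin down the sign of $\det\!\bigl(g|_{H^2(X)}\bigr)$ in terms of the action of $g$ on the discriminant, which is exactly the relation $\det(g)=\chi(g)$.

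For ``$\supseteq$'' one must produce enough monodromy operators. Three sources should suffice: (i) every monodromy operator of $A$ --- these exhaust $O^+(H^2(A,\Z))=O^+(U^{\oplus 3})$ by the Torelli theorem for abelian surfaces --- extends, via the universal family of generalized Kummers, to a monodromy operator of $K^n(A)$ fixing the exceptional class $\delta$, which together with the involution acting as $-\mathrm{id}$ on $\Z\delta$ and as the identity on $H^2(A,\Z)$ (realized by a Fourier--Mukai/duality equivalence) gives a copy of $O^+(U^{\oplus 3})\times\{\pm\mathrm{id}_{\Z\delta}\}$; (ii) finitely many further operators coming from automorphisms of $K^n(A)$ for special $A$, e.g.\ those induced by torsion translations for the appropriate values of $n$; (iii) wall-crossing among the moduli spaces $K_v(A,H)$, which realizes birational --- hence monodromy --- reflections in distinguished classes mixing the two summands, in the spirit of Bayer--Macr\`i and Yoshioka. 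One checks that each such generator satisfies $\chi\det=1$; since the homomorphism $\chi\cdot\det\colon\mathcal W(\Lambda_{\text{Kum}^{n}})\to\{\pm1\}$ is surjective --- for instance the interchange of a hyperbolic basis in one summand $U$ has $\chi=1$ and $\det=-1$ --- the subgroup $\{g\in\mathcal W:\chi(g)\det(g)=1\}$ has index $2$ in $\mathcal W(\Lambda_{\text{Kum}^{n}})$, and comparing indices forces the group generated by (i)--(iii) to be all of it. Together with ``$\subseteq$'' this yields the asserted equality.
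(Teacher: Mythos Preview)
The paper does not prove this statement at all: it is quoted verbatim from \cite{Mongardi16} as Theorem~\ref{Thm:monodromymongardi} and used as a black box in the subsequent arguments about induced manifolds of $\mathrm{Kum}^n$-type. There is therefore no ``paper's own proof'' to compare your proposal against.

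That said, a few remarks on your sketch. The overall architecture (both inclusions, realizing $X$ as $K_v(A,H)$ to get the embedding into $\widetilde{\Lambda_{\mathrm{Ab}}}$, producing monodromy generators from the abelian surface side plus wall-crossing) is the right shape and is indeed close to how Mongardi proceeds. However, the crucial step --- establishing $\det(g)=\chi(g)$ for every $g\in\mathrm{Mon}^2(X)$ --- is not actually argued in your proposal: you write that chasing compatibilities through $H^3$ ``should pin down'' the sign, but this is precisely where the substantive work lies, and a genuine proof must make the passage from the action on odd cohomology to the determinant constraint explicit. Likewise, in the ``$\supseteq$'' direction you assert that the operators from sources (i)--(iii) generate the full kernel of $\chi\cdot\det$, but you neither verify that each listed operator satisfies $\chi\cdot\det=1$ nor show that together they generate an index-$2$ subgroup of $\mathcal W(\Lambda_{\mathrm{Kum}^n})$; an index comparison alone does not suffice without first knowing the index of the subgroup you have actually produced. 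These are real gaps rather than routine details. If you intend to include a proof here rather than simply cite Mongardi, both points need to be filled in.
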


We need also an analogue of the monodromy invariant embedding due to Markman. Recall the following result by Wieneck.

\begin{Thm}[\cite{Wieneck18}, Theorem 4.9]
Let $X$ be a HK manifold of Kum$^n$-type, $n\geq 2$. Then there exists a canonical monodromy invariant $O(\widetilde{\Lambda_{\mathrm{Ab}}})$-orbit $\iota_X$ of primitive isometric embeddings of $\Lambda = H^2(X,\Z)$ in the Mukai lattice $\widetilde{\Lambda_{\mathrm{Ab}}}$.
\end{Thm}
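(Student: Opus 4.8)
The plan is to transplant Markman's construction of the distinguished orbit $i_X$ for the $K3^{[n]}$-type \cite{Markman2010} to the Kum$^n$-type, using Mongardi's description of the monodromy group (Theorem \ref{Thm:monodromymongardi}) in place of Markman's. First I would fix the lattice picture and isolate the mechanism behind monodromy invariance. Since $\widetilde{\Lambda_{\mathrm{Ab}}}=\Lambda_{\mathrm{Ab}}\oplus U\cong U^{\oplus4}$ is even unimodular of signature $(4,4)$ while $\Lambda_{\text{Kum}^{n}}=U^{\oplus3}\oplus\langle-2(n+1)\rangle$ has signature $(3,4)$, every primitive isometric embedding $j\colon\Lambda_{\text{Kum}^{n}}\hookrightarrow\widetilde{\Lambda_{\mathrm{Ab}}}$ has rank-one positive definite orthogonal complement of discriminant $2(n+1)$, hence $\langle2(n+1)\rangle$, spanned by a primitive vector $v$; conversely each primitive $v$ with $v^{2}=2(n+1)$ gives such a $j$ once $\Lambda_{\text{Kum}^{n}}$ is identified with $v^{\perp}$. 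Because $\widetilde{\Lambda_{\mathrm{Ab}}}$ is unimodular the glue between $\Lambda_{\text{Kum}^{n}}$ and $\langle v\rangle$ is maximal, so by Nikulin's gluing lemma (see Appendix \ref{Sec:LatticeTheory}) an isometry $g\in O(\Lambda_{\text{Kum}^{n}})$ extends to $\widetilde g\in O(\widetilde{\Lambda_{\mathrm{Ab}}})$ (with $\widetilde g(v)=\pm v$) \emph{exactly when} $g$ acts as $\pm1$ on $\Lambda_{\text{Kum}^{n}}^{\vee}/\Lambda_{\text{Kum}^{n}}\cong\Z/2(n+1)\Z$. As already happens for $K3^{[n]}$, where the set of such orbits is $P_n$, there may be more than one $O(\widetilde{\Lambda_{\mathrm{Ab}}})$-orbit of embeddings, so a geometric input is needed to single one out.

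Next I would produce the distinguished orbit from moduli spaces of sheaves. Fix a projective abelian surface $A$, a primitive Mukai vector $v\in\tH(A)$ with $\langle v,v\rangle=2(n+1)\geq6$ (this is where the hypothesis $n\geq2$ enters) and a $v$-generic polarization $H$; by Theorem \ref{Thm:vprimitivo}(2) the variety $K_v(A,H)$ is of Kum$^n$-type and carries a Hodge isometry $H^2(K_v(A,H),\Z)\cong v^{\perp}\subset\tH(A)$. Composing with any isometry $\tH(A)\cong\widetilde{\Lambda_{\mathrm{Ab}}}$ gives a primitive isometric embedding $j_{v,H}\colon H^2(K_v(A,H),\Z)\hookrightarrow\widetilde{\Lambda_{\mathrm{Ab}}}$, and I would check that its $O(\widetilde{\Lambda_{\mathrm{Ab}}})$-orbit is independent of the choices involved: the abstract identification of Mukai lattices is immaterial by definition of an orbit; crossing a $v$-wall replaces $K_v(A,H)$ by a bimeromorphic model, and the induced Hodge isometry on $H^{2}$ is a parallel-transport operator (Lemma \ref{Lem:bimeromorphicisomorphiclattices}, \cite{Yoshioka01}); and moving $(A,v)$ keeps us within the single Kum$^n$ deformation type, where the orbit is locally constant in families.

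Then I would extend the construction to an arbitrary HK manifold $X$ of Kum$^n$-type and deduce monodromy invariance. Fixing a marking, one chooses a parallel-transport operator $f$ from some marked $K_v(A,H)$ to $X$ and sets $\iota_X:=f_{*}\iota_{K_v(A,H)}$, where $\iota_{K_v(A,H)}$ is the canonical orbit from the previous step; independence of $f$ is then the same statement as monodromy invariance, namely that $\mathrm{Mon}^2(X)$ stabilises the orbit. This follows from the first step: by Mongardi's Theorem \ref{Thm:monodromymongardi} every $g\in\mathrm{Mon}^2(X)$ lies in $\mathcal W(\Lambda_{\text{Kum}^{n}})$, hence is orientation preserving and acts as $\pm1$ on $\Lambda_{\text{Kum}^{n}}^{\vee}/\Lambda_{\text{Kum}^{n}}$, so it lifts to $\widetilde g\in O(\widetilde{\Lambda_{\mathrm{Ab}}})$ through $j$, and $j\circ g=\widetilde g\circ j$ shows that $j$ and $j\circ g$ define the same orbit. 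Hence $\iota_X$ is well defined, monodromy invariant by construction, and canonical since it is characterised by containing every geometric embedding $j_{v,H}$.

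The hard part will be this last step, just as in \cite{Markman2010}: one must verify that every connected component of the moduli space of marked Kum$^n$-type manifolds meets the locus of moduli spaces $K_v(A,H)$ — equivalently, that the geometric orbit built in the second step is insensitive to the choice of marking and component — so that $\iota_X$ is defined for \emph{all} $X$ and not only for those bimeromorphic to a moduli space. The cleanest way to bypass this, again following Markman, is to replace the deformation argument with an intrinsic construction of a rank-eight lattice $\widetilde H(X,\Z)\cong\widetilde{\Lambda_{\mathrm{Ab}}}$ attached to $X$ (through the Mukai vectors of suitable twisted sheaves on $X$) together with a natural embedding of $H^2(X,\Z)$; the monodromy-equivariance of this embedding is then the same lattice computation with Mongardi's theorem, the only additional delicacy being the orientation/discriminant bookkeeping recorded by the constraint $\chi(g)\det(g)=1$, which has no counterpart for the $K3^{[n]}$-type.
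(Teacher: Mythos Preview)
The paper does not prove this statement: it is quoted verbatim from \cite{Wieneck18} with attribution and no argument, so there is no ``paper's own proof'' to compare against. Your proposal is therefore not a reconstruction of anything in the present paper but an independent sketch of how Wieneck's result might be established.

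As a sketch it is broadly sound. The lattice step is correct: since $\widetilde{\Lambda_{\mathrm{Ab}}}$ is unimodular and the orthogonal complement of any primitive embedding is $\langle 2(n+1)\rangle$, Nikulin's gluing shows that $g\in O(\Lambda_{\text{Kum}^{n}})$ extends across a fixed embedding iff its action on the discriminant is $\pm1$, and Mongardi's Theorem \ref{Thm:monodromymongardi} then gives monodromy invariance of any single orbit for free. The genuine content, as you note, is producing a \emph{canonical} orbit and showing it is defined for every $X$, not only those deformation-equivalent through moduli spaces; your two suggested routes (density of $K_v(A,H)$ in every component of the marked moduli space, or an intrinsic Chern-class construction \`a la Markman) are exactly the options, and Wieneck's actual proof follows the second. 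One small inaccuracy: you invoke Lemma \ref{Lem:bimeromorphicisomorphiclattices} for wall-crossing, but that lemma only gives a Hodge isometry, not that it is a parallel-transport operator; for the orbit to be preserved under wall-crossing you need the stronger statement that birational HK manifolds lie in the same connected component of the marked moduli space (Huybrechts), which is what actually makes the birational Hodge isometry a monodromy operator.
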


\begin{Lem}\label{Lem:embeddings}
There exists a bijective correspondence between the set
\[
Q_n = \{ (r,s) \,\,\,\text{coprime such that}\,\,\, -s\geq r > 0\,\,\,\text{and}\,\,\, rs = n+1\}
\]
and the set of $O(\widetilde{\Lambda_{\mathrm{Ab}}})$ orbits in $O(\Lambda_{\text{Kum}^{n}},\widetilde{\Lambda_{\mathrm{Ab}}})$.
\end{Lem}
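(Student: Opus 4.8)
The plan is to mimic the correspondence of Markman \cite[Lemma 4.3]{Markman2010} recalled just above in the $\mathrm{K3}^{[n]}$ case, with $\Lambda_{\mathrm{K3}^{[n]}}$ replaced by $\Lambda_{\text{Kum}^{n}}$, $\widetilde{\Lambda_{\mathrm{K3}}}$ by $\widetilde{\Lambda_{\mathrm{Ab}}}$, and the defining equation $-rs = n-1$ by $rs = n+1$ (the sign flip coming from the shape $\langle -2(n+1)\rangle$ versus $\langle -2(n-1)\rangle$ in Table \ref{Table:BBformknowndeformationtypes}). First I would attach to a coprime pair $(r,s)\in Q_n$ the vector $v_{r,s} := (r,0,s)\in U\subseteq\widetilde{\Lambda_{\mathrm{Ab}}}$, where $U$ is a chosen hyperbolic summand; since $\langle v_{r,s},v_{r,s}\rangle = 2rs = 2(n+1)$, its orthogonal complement $v_{r,s}^{\perp}$ in $\widetilde{\Lambda_{\mathrm{Ab}}} = U^{\oplus 3}\oplus U$ is a lattice of rank $7$, and one checks by a direct discriminant-form computation (Appendix \ref{Sec:LatticeTheory}, namely the gluing/discriminant formulas) that $v_{r,s}^{\perp}\cong \Lambda_{\text{Kum}^{n}} = U^{\oplus 3}\oplus\langle -2(n+1)\rangle$: indeed the discriminant group of $v_{r,s}^{\perp}$ is $\Z/2(n+1)\Z$ with the appropriate quadratic form because $r,s$ are coprime, so $v_{r,s}$ is primitive and $\gcd$ considerations force the complement to have cyclic discriminant of the right order. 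This produces a well-defined map $Q_n\to \{O(\widetilde{\Lambda_{\mathrm{Ab}}})\text{-orbits in } O(\Lambda_{\text{Kum}^{n}},\widetilde{\Lambda_{\mathrm{Ab}}})\}$, $(r,s)\mapsto [v_{r,s}^{\perp}\hookrightarrow\widetilde{\Lambda_{\mathrm{Ab}}}]$, once one verifies that the orbit does not depend on the auxiliary choice of $U$ and embedding, which follows from Eichler's criterion / Nikulin's uniqueness results (Theorem \ref{Nik_1.14.4} and its relatives) since $\widetilde{\Lambda_{\mathrm{Ab}}}$ contains at least two hyperbolic planes.

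Next I would show the map is well-defined on $Q_n$ up to the sign symmetry already absorbed in the normalization $-s\geq r>0$ (so $(r,s)$ and $(-r,-s)$, or equivalently $v$ and $-v$, give the same orbit; the inequality $-s\ge r>0$ picks a representative and simultaneously rules out the swap $(r,s)\leftrightarrow(s,r)$ except in the excluded symmetric cases). For injectivity: given $v,w\in\widetilde{\Lambda_{\mathrm{Ab}}}$ primitive of square $2(n+1)$ with $v^{\perp}$ and $w^{\perp}$ in the same $O(\widetilde{\Lambda_{\mathrm{Ab}}})$-orbit, an isometry carrying one complement to the other carries $v$ to $\pm w$; so the data is really the $O(\widetilde{\Lambda_{\mathrm{Ab}}})$-orbit of a primitive vector of square $2(n+1)$, and by Eichler's theorem for lattices with two hyperbolic planes such an orbit is determined by the square together with the element of the discriminant group $\Z/2(n+1)\Z$ that $v$ represents (its "divisibility" type). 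Writing $v = (r,0,s)$ in one hyperbolic plane, this divisibility datum is exactly the unordered pair $\{r,s\}$ with $rs=n+1$, coprime; imposing the normalization makes it an honest element of $Q_n$, giving injectivity. For surjectivity I would argue that every $O(\widetilde{\Lambda_{\mathrm{Ab}}})$-orbit of primitive isometric embeddings $\Lambda_{\text{Kum}^{n}}\hookrightarrow\widetilde{\Lambda_{\mathrm{Ab}}}$ arises this way: the orthogonal complement of such an embedding is a rank-$1$ lattice $\langle 2(n+1)\rangle$ (forced by the rank count $8-7=1$ and by comparing discriminant forms, using that $\widetilde{\Lambda_{\mathrm{Ab}}}$ is unimodular so $\mathrm{disc}(\Lambda_{\text{Kum}^{n}})\cong -\mathrm{disc}(\text{complement})$), hence is generated by a primitive $v$ with $v\cdot v = 2(n+1)$, and we are back in the previous situation; factoring $n+1 = rs$ into the coprime parts dictated by the divisibility of $v$ produces the pair in $Q_n$.

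The main obstacle I expect is the bookkeeping in the discriminant-form computation showing $v_{r,s}^{\perp}\cong\Lambda_{\text{Kum}^{n}}$ for \emph{every} admissible coprime factorization $rs=n+1$ — a priori different factorizations could give non-isometric complements, and the point (exactly as in Markman's $\mathrm{K3}^{[n]}$ argument) is that they are all abstractly isometric to $\Lambda_{\text{Kum}^{n}}$ but sit \emph{inequivalently} inside $\widetilde{\Lambda_{\mathrm{Ab}}}$; disentangling "abstractly isometric" from "conjugate under $O(\widetilde{\Lambda_{\mathrm{Ab}}})$" is where one must invoke Eichler's criterion and keep careful track of which discriminant-group element $v$ represents. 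A secondary subtlety is that, unlike $\widetilde{\Lambda_{\mathrm{K3}}}$, here $\Lambda_{\mathrm{Ab}} = U^{\oplus 3}$ already has even rank with trivial discriminant, so I would double-check there is no parity obstruction and that the correspondence $P_n\leftrightarrow$ orbits in the K3 case really does transport verbatim after the substitution $-rs\rightsquigarrow rs$; modulo that, the argument is a routine transcription of \cite[Lemma 4.3]{Markman2010}.
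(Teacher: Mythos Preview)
Your approach has a genuine gap at the injectivity step. You propose to distinguish the orbits by applying Eichler's criterion to the primitive vector $v_{r,s}$ in $\widetilde{\Lambda_{\mathrm{Ab}}}$, claiming that the orbit is determined by ``the square together with the element of the discriminant group $\Z/2(n+1)\Z$ that $v$ represents.'' But $\widetilde{\Lambda_{\mathrm{Ab}}} = U^{\oplus 4}$ is \emph{unimodular}: its discriminant group is trivial, every primitive vector has divisibility $1$, and Eichler's criterion therefore says that \emph{all} primitive vectors of square $2(n+1)$ lie in a single $O(\widetilde{\Lambda_{\mathrm{Ab}}})$-orbit. Hence all the sublattices $v_{r,s}^{\perp}$ are mutually conjugate in $\widetilde{\Lambda_{\mathrm{Ab}}}$, and your proposed invariant collapses. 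The same confusion reappears in your surjectivity sketch: ``factoring $n+1=rs$ into the coprime parts dictated by the divisibility of $v$'' recovers nothing, since that divisibility is always $1$.

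What you are missing is that the lemma counts orbits of \emph{embeddings} $\iota:\Lambda_{\text{Kum}^{n}}\hookrightarrow\widetilde{\Lambda_{\mathrm{Ab}}}$ as maps, not orbits of sublattices. Two embeddings with the same image $S_0$ differ by an automorphism of $\Lambda_{\text{Kum}^{n}}$, and they lie in the same $O(\widetilde{\Lambda_{\mathrm{Ab}}})$-orbit if and only if that automorphism extends to the ambient lattice; since the orthogonal complement is rank one, extendability forces the induced map on the discriminant group $\Z/2(n+1)\Z$ of $\Lambda_{\text{Kum}^{n}}$ to be $\pm 1$, and the coset space $O(q_{\Lambda_{\text{Kum}^{n}}})/\{\pm 1\}$ is what is in bijection with $Q_n$. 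Your map $(r,s)\mapsto [v_{r,s}^{\perp}\hookrightarrow\widetilde{\Lambda_{\mathrm{Ab}}}]$ never fixes a specific isometry $\Lambda_{\text{Kum}^{n}}\cong v_{r,s}^{\perp}$, so it cannot see this finer data. The paper sidesteps all of this by writing down the embedding explicitly---identity on $U^{\oplus 3}$, generator of $\langle -2(n+1)\rangle$ sent to $(r,0,-s)$ in the remaining $U$---and then observing that any $g\in O(\widetilde{\Lambda_{\mathrm{Ab}}})$ intertwining two such embeddings must fix $U^{\oplus 3}\oplus 0$ pointwise, hence lies in $O(U)\cong(\Z/2\Z)^2$, which one checks directly cannot identify distinct normalized pairs in $Q_n$.
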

\begin{proof}
For every $(r,s)\in Q_n$ define the embedding $\iota_{r,s}:\Lambda_{\text{Kum}^{n}}\to \widetilde{\Lambda_{\mathrm{Ab}}}$ by fixing $U^{\oplus 3}\oplus 0$ and sending the generator of $\langle -2(n+1)\rangle$ to $(r,0,-s)$. Notice that the image is given by $(r,0,s)^\perp$. Suppose that there exists an automorphism $g\in O(\widetilde{\Lambda_{\mathrm{Ab}}})$ such that $\iota_{r,s} = \iota_{r',s'}\circ g$, then this morphism must fix $U^{\oplus 3}\oplus 0$ and send $(r,0,-s)$ to $(r',0,-s')$. Since both vectors are contained in $0\oplus U$, then $g$ has to be given by an automorphism of $U$, but $O(U)\cong (\Z/2\Z)^2$
, therefore $g$ must be the identity.

Let $\phi:\Lambda_{\text{Kum}^{n}}\to \widetilde{\Lambda_{\mathrm{Ab}}}$ be an embedding. By Theorem \ref{Nik_1.14.4} there is a unique $O(\widetilde{\Lambda_{\mathrm{Ab}}})$-orbit of embeddings of $U^{\oplus 3}$ into $\widetilde{\Lambda_{\mathrm{Ab}}}$, hence there exists an isometry $g\in O(\widetilde{\Lambda_{\mathrm{Ab}}})$ such that $\phi(U^{\oplus 3})$ is sent to $U^{\oplus 3}\oplus 0$. Let $v\in \Lambda_{\text{Kum}^{n}}$ be a vector generating the sublattice $\langle -2(n+1)\rangle$, since $\widetilde{\Lambda_{\mathrm{Ab}}} = U^{\oplus 3}\oplus U$, we reduce the problem to classify the possible embeddings of $v$ into $U$; since $O(U)\cong (\Z/2\Z)^2$ we have that this embeddings are classified as pairs in the set $Q_n$.
\end{proof}

\begin{Lem}\cite[Lemma 3]{Shioda78}\label{Lem:isomorphismHodgeAbelianDual}
Let $A$ be an abelian surface. There exists a canonical isomorphism of Hodge structures $g:H^2(A^\vee,\Z)\to H^2(A,\Z)$ such that $\det(g) = -1$.
\end{Lem}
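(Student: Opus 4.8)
The plan is to build $g$ as a composition of canonical identifications of Hodge structures and then to compute its determinant by a short exterior-algebra calculation.

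Recall first that for any complex torus $A$ the cohomology ring is the exterior algebra on $H^{1}$, so $H^{2}(A,\Z)=\bigwedge^{2}H^{1}(A,\Z)$ with $H^{1}(A,\Z)=\mathrm{Hom}(H_{1}(A,\Z),\Z)$. Recall also the canonical duality isomorphism of Hodge structures $H^{1}(A^{\vee},\Z)\cong H_{1}(A,\Z)(-1)$, valid for every abelian variety and coming from the Poincaré bundle; I would take this as known. Since $\dim A=2$, the complex orientation gives a canonical isomorphism $H^{4}(A,\Z)\cong\Z(-2)$, so the cup product is a perfect pairing $H^{2}(A,\Z)\otimes H^{2}(A,\Z)\to\Z(-2)$ and induces a canonical isomorphism of Hodge structures $H^{2}(A,\Z)\cong\mathrm{Hom}\bigl(\bigwedge^{2}H^{1}(A,\Z),\Z(-2)\bigr)\cong\bigl(\bigwedge^{2}H_{1}(A,\Z)\bigr)(-2)$. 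Composing this with the second exterior power of the duality isomorphism one obtains $\bigl(\bigwedge^{2}H_{1}(A,\Z)\bigr)(-2)\cong\bigwedge^{2}H^{1}(A^{\vee},\Z)=H^{2}(A^{\vee},\Z)$. Every arrow here is a morphism of integral Hodge structures and the Tate twists cancel, so the composite $g\colon H^{2}(A^{\vee},\Z)\to H^{2}(A,\Z)$ is an isomorphism of Hodge structures; it is canonical, and one checks directly from the definitions that it coincides with the map considered in \cite{Shioda78}.

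To compute $\det(g)$, fix a basis $l_{1},\dots,l_{4}$ of $H_{1}(A,\Z)$ with dual basis $m_{1},\dots,m_{4}$ of $H^{1}(A,\Z)$, inducing bases $\{l_{i}\wedge l_{j}\}_{i<j}$ and $\{m_{i}\wedge m_{j}\}_{i<j}$ of the two rank-six lattices above and a generator $m_{1}\wedge m_{2}\wedge m_{3}\wedge m_{4}$ of $\bigwedge^{4}H^{1}(A,\Z)$. Under the natural identification $\bigl(\bigwedge^{2}H^{1}\bigr)^{\vee}\cong\bigwedge^{2}H_{1}$ the basis $\{l_{i}\wedge l_{j}\}$ is dual to $\{m_{i}\wedge m_{j}\}$, and under $H_{1}(A,\Z)\cong H^{1}(A^{\vee},\Z)(1)$ it maps to the chosen basis of $H^{2}(A^{\vee},\Z)$; hence the matrix of $g^{-1}$ in these bases is exactly the Gram matrix $G$ of the wedge pairing $\bigwedge^{2}H^{1}\times\bigwedge^{2}H^{1}\to\bigwedge^{4}H^{1}\cong\Z$. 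That pairing is the standard split form on the exterior square of a rank-four lattice: it pairs $m_{1}\wedge m_{2}$ with $m_{3}\wedge m_{4}$, $m_{1}\wedge m_{3}$ with $m_{2}\wedge m_{4}$, and $m_{1}\wedge m_{4}$ with $m_{2}\wedge m_{3}$ (with signs $+,-,+$), so $G$ is an orthogonal direct sum of three hyperbolic planes (recovering $H^{2}(A,\Z)\cong U^{\oplus 3}$) and $\det G=(-1)^{3}=-1$. Choosing the opposite generator of $\bigwedge^{4}H^{1}$ multiplies $G$ by $-1$, hence $\det G$ by $(-1)^{6}=1$, and a sign ambiguity in the duality isomorphism disappears after applying $\bigwedge^{2}$; therefore $\det(g)=(\det G)^{-1}=-1$ unconditionally.

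The main obstacle is precisely this sign: naturality alone only gives $\det(g)=\pm 1$, so one really must identify $g$ explicitly and run the exterior-power computation to see that the value is $-1$ and not $+1$. A reassuring cross-check is dimension one, where $E^{\vee}\cong E$ via the canonical principal polarization and the induced isomorphism on $H^{2}$ preserves the complex orientation, so there $\det=+1$; the sign change in dimension two is exactly the reordering sign that appears when one combines the one-dimensional statement with the Künneth decomposition for a product of two elliptic curves, which gives a second, more hands-on, way to carry out the computation.
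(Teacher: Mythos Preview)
The paper does not supply its own proof of this lemma; it is quoted directly from \cite[Lemma~3]{Shioda78} and then used as input for Lemma~\ref{Lem:adjust-1}. Your argument is correct and is essentially the classical one: build $g$ from the canonical identification $H^{1}(A^{\vee},\Z)\cong H_{1}(A,\Z)(-1)$ together with Poincar\'e duality on $A$, and then observe that in the adapted bases the matrix of $g^{-1}$ is the Gram matrix of the wedge pairing on $\bigwedge^{2}$ of a rank-four lattice, i.e.\ the Gram matrix of $U^{\oplus 3}$, whose determinant is $(-1)^{3}=-1$. Your verification that this sign is independent of the auxiliary choices (the generator of $\bigwedge^{4}H^{1}$ and the sign in the duality map) is precisely what is needed to make the statement ``$\det(g)=-1$'' unambiguous for a map between two a priori different lattices.

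One point worth making explicit, since it is how the lemma is actually applied in the paper: the same computation shows that $g$ is a Hodge \emph{isometry} for the cup-product forms, not merely a Hodge isomorphism. Indeed, with $G$ the Gram matrix of $U^{\oplus 3}$ one has $G^{t}=G$ and $G^{2}=\mathrm{id}$, so $G^{t}G'G=G$ with $G'=G$ the Gram matrix on the $A^{\vee}$ side. This is implicit in your set-up but is what Lemma~\ref{Lem:adjust-1} uses when it speaks of $g$ as a Hodge isometry with $\det(\phi)=-1$.
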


By definition of the Mukai lattice, Lemma \ref{Lem:isomorphismHodgeAbelianDual} implies in particular that there is an isomorphism $\tH(A)\cong_\Hdg \tH(A^\vee)$.

\begin{Lem}\label{Lem:adjust-1}
Let $A$ be an abelian surface and fix a Mukai vector $v\in \tH(A)\cong \tH(A^\vee)$; then there exists a Hodge isometry $\phi: H^2(K_v(A^\vee),\Z)\to H^2(K_v(A),\Z)$ for generic polarizations. Moreover, with respect to the Mukai embedding we have $\det(\phi) = -1$ and $\chi(\phi) = 1$.
\end{Lem}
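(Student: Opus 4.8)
The plan is to leverage the canonical Hodge isomorphism $g:H^2(A^\vee,\Z)\to H^2(A,\Z)$ from Lemma \ref{Lem:isomorphismHodgeAbelianDual} and upgrade it to the level of moduli spaces. First I would observe that, for a Mukai vector $v=(v_0,v_2,v_4)$, the duality $g$ on $H^2$ together with the identities $H^0(A^\vee)=H^0(A)$, $H^4(A^\vee)=H^4(A)$ induces a Hodge isometry $\tilde g:\tH(A^\vee)\to\tH(A)$ carrying $v$ to the corresponding Mukai vector (after possibly absorbing a sign in $v_2$, which is harmless since $g$ reverses orientation on $H^2$; one checks that $\tilde g$ still sends Mukai vectors to Mukai vectors because $NS$ is preserved and $v_0$ is unchanged). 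By Theorem \ref{Thm:vprimitivo} there is a Hodge isometry $v^\perp\cong H^2(K_v(A),\Z)$ and likewise $v^\perp\cong H^2(K_v(A^\vee),\Z)$ (for $v$-generic polarizations, which exist by the Remark after Definition \ref{def:v-generic}); restricting $\tilde g$ to $v^\perp\subset\tH(A^\vee)$ and composing, I obtain the desired Hodge isometry $\phi:H^2(K_v(A^\vee),\Z)\to H^2(K_v(A),\Z)$.

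Next I would compute the two invariants. For $\det(\phi)$: the embeddings $\tH(A)=H^2(A,\Z)\oplus U$ and $\tH(A^\vee)=H^2(A^\vee,\Z)\oplus U$ (where $U=H^0\oplus H^4$) are orthogonal direct sums, so $\det(\tilde g)=\det(g)\cdot\det(\mathrm{id}_U)=-1$. Since $v^\perp$ and $H^2(K_v)$ are related to this picture in a sign-neutral way — passing from $v^\perp$ inside $\tH$ to $H^2(K_v)$ via the Yoshioka isometry of Theorem \ref{Thm:vprimitivo} is an honest isometry of lattices and contributes $\pm1$ consistently on both sides — the determinant of $\phi$ equals $\det(\tilde g)$ restricted appropriately, which is $-1$; here I would use that a determinant-$(-1)$ isometry of the full Mukai lattice that fixes the rank-one piece spanned near $v$ restricts to a determinant-$(-1)$ isometry of $v^\perp$. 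For $\chi(\phi)$: by definition $\chi$ records the sign of the action on the discriminant group $\Lambda_{\mathrm{Kum}^n}^\vee/\Lambda_{\mathrm{Kum}^n}\cong\Z/2(n+1)\Z$. Since $\tilde g$ is induced from $g$ on an orthogonal summand and the identity on $U$, and $g$ itself acts on the discriminant of $H^2(A,\Z)=U^{\oplus3}$ — which is trivial — the induced action on the discriminant of $v^\perp$ is the same as for any orientation-reversing reflection-type isometry. I would argue that $\chi(\phi)=1$ by noting $\phi$ acts as $+1$ on the discriminant: indeed the discriminant of $v^\perp$ inside $\tH$ is generated (up to the sign ambiguity) by the image of $v/\langle v,v\rangle$, which $\tilde g$ sends to $\pm v/\langle v,v\rangle$, and the global sign is pinned down to $+1$ by compatibility with the (trivial) discriminant of $U^{\oplus 3}$.

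The main obstacle I anticipate is precisely the bookkeeping of signs: keeping straight whether the Yoshioka isometry $v^\perp\cong H^2(K_v(A),\Z)$ is canonical up to orientation, and ensuring that the sign introduced by switching $v_2\mapsto -v_2$ (needed so that $\tilde g(v)$ is a genuine Mukai vector with nonnegative rank) does not secretly flip $\chi$ or contribute an extra $-1$ to $\det$. The cleanest way around this is to do the computation after fixing the monodromy-invariant embedding $\iota_X$ of Wieneck's theorem, so that "$\det$" and "$\chi$" are computed with respect to a single, canonical copy of $\widetilde{\Lambda_{\mathrm{Ab}}}$ on both sides; then $\phi$ is literally (conjugate to) $g\oplus\mathrm{id}_U$ restricted to the orthogonal complement of a fixed rank-one sublattice, and both invariants can be read off from $\det(g)=-1$ and the triviality of the discriminant of $U^{\oplus 3}$. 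Concretely, the fact that $\chi(g\oplus\mathrm{id}_U)=1$ follows because the induced map on $\mathrm{disc}(v^\perp)$ is multiplication by a unit square, hence $+1$; I would verify this last point by the standard discriminant-form computation for $v^\perp$ when $v$ is primitive.
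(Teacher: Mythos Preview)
Your overall strategy---use Shioda's Hodge isometry $g$ of Lemma \ref{Lem:isomorphismHodgeAbelianDual} and extend it to the Mukai lattices---is exactly what the paper does. The difference is that the paper first invokes Lemma \ref{Lem:embeddings} to assume without loss of generality that $v=(r,0,s)$, i.e.\ $v_2=0$. This single reduction dissolves all the sign bookkeeping you worry about: with $v_2=0$ the extended map $\tilde g=g\oplus\mathrm{id}_{H^0\oplus H^4}$ genuinely fixes $v$, the orthogonal complement $v^\perp$ splits as $H^2(A,\Z)\oplus\langle(r,0,-s)\rangle$, and $\phi$ is literally $g\oplus\mathrm{id}$ on that decomposition. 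Then $\det(\phi)=\det(g)\cdot 1=-1$ is immediate, and $\chi(\phi)=1$ follows because $\phi$ is the identity on the rank-one summand $\langle(r,0,-s)\rangle$, which already generates the discriminant group. No ``unit square'' argument and no tracking of Yoshioka isometries on both sides is needed.

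Without this reduction your construction has a genuine gap: for a general $v=(v_0,v_2,v_4)$ with $v_2\neq 0$, the map $\tilde g$ does \emph{not} send $v$ to $\pm v$ (the isometry $g$ has $\det(g)=-1$ but is not $-\mathrm{id}$ on $H^2$), so $\tilde g|_{v^\perp}$ lands in $(\tilde g(v))^\perp$, not in $v^\perp$. To get back to $v^\perp$ you would have to compose with a further isometry of $\tH(A)$ carrying $\tilde g(v)$ to $v$, and the determinant and discriminant action of that auxiliary isometry are precisely what you cannot control. Your final paragraph essentially rediscovers the paper's fix (work with $g\oplus\mathrm{id}_U$ and a fixed rank-one complement), but you never say why you may place $v$ in the extra $U$-summand; that is exactly the content of Lemma \ref{Lem:embeddings}, and citing it up front is what makes the proof two lines long.
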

\begin{proof}
By Lemma \ref{Lem:embeddings} we can assume that $v = (r,0,s)$ for some $r,s$. Then, by Theorem \ref{Thm:vprimitivo} we know that
\[
H^2(K_v(A^\vee),\Z) \cong_\Hdg v^\perp \cong_\Hdg H^2(K_v(A),\Z)
\]
as Hodge structures; observe that due to the specific choice of $v$ we can assume the previous isomorphism to be given as $g$ on $H^2(A,\Z)$ and fixing $(r,0,-s)$. This is a Hodge isometry by construction and $\det(\phi) = -1$; observe that since this restricts to the identity on $(r,0,-s)$, then $\chi(g) = 1$ as claimed.
\end{proof}

\begin{Thm}
Let $X$ be a HK manifold of Kum$^n$-type, then $X$ is induced by $T(A)$ if and only if $X$ is birational to $K_v(A,H)$ or $K_v(A^\vee,H)$ for some $v\in \tH(A)$ and some $v$-generic polarization $H$.
\end{Thm}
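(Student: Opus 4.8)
The plan is to follow the strategy of Theorem \ref{Thm:InducedK3n}, replacing Markman's embedding by the monodromy-invariant embedding $\iota_X$ of \cite{Wieneck18} and the $K3^{[n]}$ Torelli theorem by its $\mathrm{Kum}^{n}$ analogue. The genuinely new feature, which is what forces the alternative ``$A$ or $A^\vee$'' in the statement, is that $\mathrm{Mon}^2$ for $\mathrm{Kum}^{n}$-type is strictly smaller than the group of isometries acting by $\pm 1$ on the discriminant: by Theorem \ref{Thm:monodromymongardi} it is cut out by the additional relation $\chi(g)\det(g)=1$.

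For the implication $(\Leftarrow)$, if $X$ is birational to $K_v(A,H)$ (resp. to $K_v(A^\vee,H)$), then Theorem \ref{Thm:vprimitivo} gives $T(K_v(A,H))\cong_\Hdg T(A)$ (resp. $T(K_v(A^\vee,H))\cong_\Hdg T(A^\vee)$), Lemma \ref{Lem:isomorphismHodgeAbelianDual} gives $T(A^\vee)\cong_\Hdg T(A)$, and Lemma \ref{Lem:bimeromorphicisomorphiclattices} gives $T(X)\cong_\Hdg T(K_v(\cdot\,,H))$. Composing these Hodge isometries shows $T(X)\cong_\Hdg T(A)$, so $X$ is induced by $T(A)$.

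For $(\Rightarrow)$, fix a Hodge isometry $\alpha\colon T(X)\xrightarrow{\sim}T(A)$. Since $T(A)^{\perp}$ inside $\tH(A)$ contains the hyperbolic plane $H^0(A)\oplus H^4(A)$, Theorem \ref{Nik_1.14.4} shows that the primitive embedding $T(A)\hookrightarrow\tH(A)$ is unique up to isometry; hence, via $\iota_X$ and the isometry $\widetilde{\Lambda_{\mathrm{Ab}}}\cong\tH(A)$, I can extend $\alpha$ to an isometry $\phi\colon\widetilde{\Lambda_{\mathrm{Ab}}}\xrightarrow{\sim}\tH(A)$ carrying $\iota_X(T(X))$ onto $T(A)$. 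Set $\Lambda:=\phi(\iota_X(H^2(X,\Z)))$ and $v:=\Lambda^{\perp}\subset\tH(A)$. Because $b_2(X)=7$ while $\mathrm{rk}\,\widetilde{\Lambda_{\mathrm{Ab}}}=8$, this $v$ is a single primitive vector; since $\Lambda\supseteq T(A)$ it lies in $T(A)^{\perp}=\tH(A)_{\mathrm{alg}}$, so after possibly replacing $v$ by $-v$ it is a Mukai vector; and $\Lambda\cong\Lambda_{\mathrm{Kum}^{n}}$ forces $\langle v,v\rangle=2(n+1)\geq 6$. By Theorem \ref{Thm:vprimitivo}, for any $v$-generic polarization $H$ on $A$ the variety $K_v(A,H)$ is of $\mathrm{Kum}^{n}$-type with $H^2(K_v(A,H),\Z)\cong_\Hdg v^{\perp}=\Lambda$, and this Hodge isometry can be taken compatible with $\iota_{K_v(A,H)}$ and the inclusion $\Lambda\hookrightarrow\widetilde{\Lambda_{\mathrm{Ab}}}$. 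Composing, $\phi$ restricts to a Hodge isometry $f\colon H^2(X,\Z)\xrightarrow{\sim}H^2(K_v(A,H),\Z)$ with $\iota_X=\iota_{K_v(A,H)}\circ f$.

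The final step is to feed $f$ into the Hodge-theoretic Torelli theorem for $\mathrm{Kum}^{n}$-type (combining \cite{Markman2011} with Theorem \ref{Thm:monodromymongardi}), and here is the main obstacle: compatibility with the monodromy-invariant embeddings does not by itself guarantee that $f$ is realized by a bimeromorphic map, one needs in addition $\chi(f)\det(f)=1$. If $f$ satisfies this, we conclude at once that $X$ and $K_v(A,H)$ are bimeromorphic. If instead $\chi(f)\det(f)=-1$, I would post-compose with the Hodge isometry $H^2(K_v(A^\vee,H),\Z)\to H^2(K_v(A,H),\Z)$ of Lemma \ref{Lem:adjust-1}, which is compatible with the Mukai embeddings and has determinant $-1$ and $\chi=1$; by multiplicativity of $\det$ and of the character $\chi$ the composite $f'\colon H^2(X,\Z)\to H^2(K_v(A^\vee,H),\Z)$ then satisfies $\chi(f')\det(f')=1$ while remaining compatible with $\iota_X$ and $\iota_{K_v(A^\vee,H)}$, so Torelli yields that $X$ is bimeromorphic to $K_v(A^\vee,H)$. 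In either case $X$ is birational to a $K_v$ over $A$ or $A^\vee$, as desired. The points that require care are: keeping the discriminant and orientation bookkeeping consistent across $\iota_X$, $\iota_{K_v}$ and $\phi$, so that $\chi(f)\det(f)\in\{\pm1\}$ is a well-defined obstruction; checking that $\langle v,v\rangle$ and $\mathrm{disc}(\Lambda)$ match so that Theorem \ref{Thm:vprimitivo} genuinely applies; and verifying that the passage from $A$ to $A^\vee$ does flip $\chi\det$ rather than preserve it.
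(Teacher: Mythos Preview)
Your proposal is correct and follows essentially the same route as the paper: extend the Hodge isometry on transcendental lattices to the Mukai lattice via Wieneck's monodromy-invariant embedding $\iota_X$ and Nikulin's uniqueness (Theorem \ref{Nik_1.14.4}), produce the Mukai vector $v$ as the orthogonal complement, and then invoke Torelli together with Mongardi's monodromy computation, using Lemma \ref{Lem:adjust-1} to pass to $A^\vee$ when $\chi(f)\det(f)=-1$. The only cosmetic slip is directional: to obtain $f'\colon H^2(X,\Z)\to H^2(K_v(A^\vee,H),\Z)$ you should post-compose $f$ with the \emph{inverse} of the isometry in Lemma \ref{Lem:adjust-1}, but since $\det$ and $\chi$ are unchanged under inversion this does not affect the argument.
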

\begin{proof}
By Lemma \ref{Lem:bimeromorphicisomorphiclattices} we just need to prove the implication from left to right. Suppose that $T(X)\cong T(A)$ and consider the standard embedding $\iota: H^2(Kum_n(A),\Z)\to \tH(A)$ given as $(1,0,1-n)^\perp$. By construction, we have that $T(A)^\perp$ in $\tH(A)$ contains the hyperbolic plane $H^0(A)\oplus H^4(A)$, by Theorem \ref{Nik_1.14.4} we know that this embedding is unique, hence there exists an isometry $\varphi: \widetilde{\Lambda} \to \tH(A)$ such that $\varphi(\iota_X(T(X))) = T(A)$.

Set $\Lambda = \varphi(H^2(X))$ and $v = \Lambda^\perp$: this is a primitive vector that takes values in $\tH(S)_{\mathrm{alg}}$. Possibly by changing sign, we can assume that $v$ is a Mukai vector; by Theorem \ref{Thm:vprimitivo} for any $v$-generic ample class  $H$ in $S$  we have $v^\perp \cong_\Hdg H^2(M_v(S,H),\Z)$. This means that $\varphi$ restricted to $H^2(X,\Z)$ is in fact an isometry of Hodge structures that, by construction, extends to the Mukai lattice. We already know that $\varphi\in \mathcal{W}(\Lambda)$ (cf. the previous Remark), by Theorem \ref{Thm:monodromymongardi} what we are left to show is that $\chi(\varphi)\det(\varphi) = 1$. If not, by Lemma \ref{Lem:adjust-1} there exists a Hodge isometry $\psi: H^2(K_v(A^\vee),\Z)\to H^2(K_v(A),\Z)$ such that $\det(\psi) = -1$ and $\chi(\psi) = 1$, and composing with $\psi$ we obtain the required equality.
\end{proof}

We provide a result analogous to Corollary \ref{Cor:UniquenessInducedK3} for this deformation type.
\begin{Cor}
Every HK manifold of Kum$^{n}$-type with Picard rank $\geq 4$ is induced by a unique abelian surface or its dual.
\end{Cor}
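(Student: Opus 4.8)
The plan is to run the argument of Corollary~\ref{Cor:UniquenessInducedK3} \emph{mutatis mutandis}, with the preceding theorem on Kum$^n$-type manifolds playing the role of Markman's criterion and an abelian-surface analogue of \cite[Corollary~2.10]{Morrison84} playing the role of Morrison's realization result.

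First I would record the numerical reduction. For a projective HK manifold $X$ of Kum$^n$-type the Beauville--Bogomolov form has signature $(3,4)$, and since $X$ is projective $\NS(X)$ has signature $(1,\rho(X)-1)$; hence $T(X)=\NS(X)^{\perp q_X}$ has signature $(2,5-\rho(X))$. Thus $\rho(X)\ge 4$ forces $\rho(X)\in\{4,5\}$ and $\mathrm{rk}\,T(X)\in\{3,2\}$, and $T(X)$ is an even lattice, being a sublattice of $U^{\oplus 3}\oplus\langle -2(n+1)\rangle$.

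The key input is the following abelian analogue of Morrison's theorem: every Hodge structure of K3-type of rank at most $3$ is Hodge isometric to the transcendental lattice $T(A)$ of an abelian surface $A$, unique up to isomorphism and up to replacing $A$ by its dual $A^\vee$. For existence, one first notes that such low-rank Hodge structures are automatically \emph{of abelian type}: a rank-$2$ K3-type structure has imaginary quadratic Hodge endomorphism field, hence is of CM type and realized by a singular abelian surface (classically, Shioda--Mitani); a rank-$3$ K3-type structure $V$ is irreducible and cannot carry totally real cubic multiplication (a real embedding cannot separate $V^{2,0}$ from $V^{0,2}$), so $\mathrm{End}_\Hdg(V_\Q)=\Q$ and its Kuga--Satake variety is an abelian surface into whose $H^2$ $V_\Q$ embeds. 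Combined with the surjectivity of the period map for abelian surfaces and the primitive embeddability of $T(X)$ into $U^{\oplus 3}$ (Nikulin, cf.\ Theorem~\ref{Nik_1.14.4}), this produces an abelian surface $A$ with $T(A)\cong_\Hdg T(X)$. Uniqueness up to duality I would obtain as in Morrison's argument: the relevant lattices form a single genus, so $T(X)$ determines its orthogonal complement and the finitely many extensions of its Hodge structure, the residual ambiguity being exactly that of Lemma~\ref{Lem:isomorphismHodgeAbelianDual}.

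Granting this, the Corollary follows: by the preceding theorem, $X$ is induced by $T(A)$ if and only if $X$ is birational to $K_v(A,H)$ or $K_v(A^\vee,H)$ for some Mukai vector $v$ and $v$-generic polarization $H$; the realization result applied to the rank-$\le 3$ Hodge structure $T(X)$ yields such an $A$, and any other abelian surface inducing $X$ is isomorphic to $A$ or $A^\vee$. I expect the main obstacle to be the abelian-surface realization/uniqueness input itself: the rank-$2$ case is classical, but realizing the \emph{integral lattice} in the rank-$3$ case and excluding Fourier--Mukai partners of $A$ beyond $A$ and $A^\vee$ is the delicate point, and is precisely what makes $\rho(X)\ge 4$, i.e.\ $\mathrm{rk}\,T(X)\le 3$, the natural hypothesis.
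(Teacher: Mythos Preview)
Your overall strategy---reduce to $\mathrm{rk}\,T(X)\le 3$ and realize $T(X)$ as $T(A)$ for an essentially unique abelian surface---matches the paper's. The paper's execution, however, is much shorter: it simply cites \cite[Corollary~2.6]{Morrison84} (stated here as Theorem~\ref{AbTrans}) for the existence of an abelian surface $A$ with $T(A)\cong_{\Hdg}T(X)$ whenever $T(X)$ has signature $(2,k)$ with $k\le 1$; it then uses that the primitive embedding $T(A)\hookrightarrow U^{\oplus 3}$ is unique (again Morrison/Nikulin), so the Hodge structure on $H^2(A,\Z)$ is determined, and finishes with Shioda's Lemma~\ref{Lem:isomorphismHodgeAbelianDual}. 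No Kuga--Satake, no period-map surjectivity, and no appeal to the preceding theorem on $K_v(A,H)$ are needed: being ``induced'' is purely a statement about $T(X)$, so once $A$ exists the corollary is proved.

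Your longer route has a genuine soft spot at exactly the place you flag. The Kuga--Satake step only produces an abelian surface with the correct \emph{rational} transcendental Hodge structure, and your passage to the integral lattice via ``surjectivity of the period map for abelian surfaces'' is not valid as stated: unlike for K3 surfaces, an arbitrary K3-type Hodge structure on $U^{\oplus 3}$ need not be of the form $\wedge^2 H^1(A,\Z)$, so embedding $T(X)$ primitively and transporting the period does not automatically yield an abelian surface. What closes this gap is precisely Morrison's Corollary~2.6, which proves directly that every even lattice of signature $(2,0)$ or $(2,1)$ is the transcendental lattice of some abelian surface. Once you invoke that, the Kuga--Satake and endomorphism-field discussion become unnecessary, your uniqueness sketch via the single genus/unique embedding coincides with the paper's, and the residual $A$ versus $A^\vee$ ambiguity is handled by Lemma~\ref{Lem:isomorphismHodgeAbelianDual} as you say.
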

\begin{proof}
Let $X$ be a HK manifold of Kum$^{n}$-type and of Picard rank at least $4$. Then its transcendental lattice $T(X)$ is of rank at most $3$, therefore it occurs as the transcendental lattice of an abelian surface $A$ by \cite[Corollary 2.6]{Morrison84}; moreover, the embedding $T(A)\subseteq U^{\oplus 3}$ is unique, therefore the Hodge structure of $T(A)$ is unique. By \ref{Lem:isomorphismHodgeAbelianDual} we get that $X$ is induced by either $A$ or $A^\vee$.
\end{proof}

\begin{Rem}
An abelian surface $A$ is isomorphic to its dual if and only if admits a principal polarization, which is to say there exists a class $H\in \NS(A)$ such that $H^2=2$. This happens when $A$ is isomorphic either to the Jacobian of a genus $2$ curve or the product of two elliptic curves. 
\end{Rem}

\section{Induced HKs in O'Grady's deformation families}\label{Sec:OgradyExamples}

\subsection{Hodge Structure for singular symplectic varieties}

Let $X$ be a normal complex variety with rational singularities and $\pi:\widetilde{X}\to X$ be a resolution of singularities. It is known (see eg. \cite[Lemma 3.1]{PeregoRapagnetta2013}) that the pull-back morphism 
\begin{equation}
\pi^*:H^2(X,\Z)\to H^2(\widetilde{X},\Z)
\end{equation}
is injective and $H^2(X,\Z)$ carries a pure Hodge structure of weight two induced by $H^2(\widetilde{X},\Z)$. In this paper we are interested in \emph{singular irreducible symplectic varieties} as used in \cite{BakkerLehn2021}.
\begin{Def}
A symplectic variety $X$ is a normal complex algebraic variety  with a holomorphic symplectic form that extends to any resolution of singularities. We say $X$ is singular irreducible symplectic if it admits a symplectic resolution $\pi:\widetilde{X}\to X$, where $\widetilde{X}$  is a HK manifold.
\end{Def}

Singular irreducible symplectic varieties are very similar to HK manifolds. In particular, they admit a non-degenerate quadratic form on their second cohomology, that is compatible with the Hodge structure and the Beauville-Bogomolov pairing on their resolutions.

\begin{Lem}\cite[Lemma 3.5]{BakkerLehn2021}\label{Lem:BBonISV}
 Let $\pi:\widetilde{X}\to X$ be a symplectic resolution. Then the pull-back map $\pi^*:H^2(X,\Z)\to H^2(\widetilde{X},\Z)$ is injective and this injection is an equality on the transcendental part $T(X) = T(\widetilde{X})$. The restriction of $q_{\widetilde{X}}$ to $H^2(X,\Z)$ is non-degenerate.
\end{Lem}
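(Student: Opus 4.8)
The plan is to follow the cited source closely, since the statement is a direct citation of \cite[Lemma 3.5]{BakkerLehn2021}, and to reconstruct the three assertions from the general facts recalled just before the Definition.

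\textbf{Injectivity of $\pi^*$.} First I would invoke the general fact already stated in the excerpt for normal varieties with rational singularities: since a symplectic variety has (by a theorem of Beauville) rational — even canonical Gorenstein — singularities, the pull-back $\pi^*:H^2(X,\Z)\to H^2(\widetilde X,\Z)$ is injective and endows $H^2(X,\Z)$ with a pure weight-two Hodge substructure of $H^2(\widetilde X,\Z)$. So this part requires nothing new.

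\textbf{Equality on the transcendental part.} The key point is to identify the orthogonal complement (inside $H^2(\widetilde X,\Z)$, with respect to $q_{\widetilde X}$) of the image of $\pi^*$. A symplectic resolution is crepant, hence the exceptional locus $E$ is covered by rational curves and its components give classes in $\NS(\widetilde X)$; dually, the cokernel of $\pi^*$ (rationally) is generated by such exceptional divisor classes, which are algebraic. More precisely: $\sigma_{\widetilde X}=\pi^*\sigma_X$, so $H^{2,0}(\widetilde X)\subseteq \mathrm{im}(\pi^*\otimes\C)$, and the Hodge structure on the cokernel is purely of type $(1,1)$; thus the saturation of $\mathrm{im}(\pi^*)$ differs from $\mathrm{im}(\pi^*)$ only by algebraic classes, and the transcendental lattice — being the orthogonal complement of \emph{all} algebraic classes — lies entirely in $\mathrm{im}(\pi^*)$. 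Conversely $\pi^*$ sends transcendental classes of $X$ to classes orthogonal to every algebraic class of $\widetilde X$ (the exceptional classes pull back from, or pair trivially with, $X$), hence into $T(\widetilde X)$. Combining the two inclusions gives $\pi^*:T(X)\xrightarrow{\ \sim\ }T(\widetilde X)$. I expect the main obstacle to be making precise the claim that $\mathrm{coker}(\pi^*)$ is spanned by algebraic classes and that these are exactly captured by $\NS(\widetilde X)$ — this uses the crepancy of $\pi$ and properties of the exceptional divisors, and is the one genuinely geometric input.

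\textbf{Non-degeneracy of $q_{\widetilde X}|_{H^2(X,\Z)}$.} Finally, write $H^2(\widetilde X,\Q)=\mathrm{im}(\pi^*)_\Q\oplus K$, where $K$ is spanned by exceptional divisor classes. Since $q_{\widetilde X}$ is non-degenerate on all of $H^2(\widetilde X,\Q)$, it suffices to show $\mathrm{im}(\pi^*)_\Q$ and $K$ are $q_{\widetilde X}$-orthogonal, for then $q_{\widetilde X}$ restricts non-degenerately to each summand. Orthogonality follows because a class of the form $\pi^*\alpha$ pairs with an exceptional class $e$ via the projection formula: $q_{\widetilde X}(\pi^*\alpha,e)$ is computed against the Fujiki relation / push-pull, and $\pi_*e=0$ since $e$ is contracted, giving $0$. (Alternatively one uses that $T(X)$ has signature $(2,\ast)$ and the polarized part is positive, so the form is anisotropic-enough on the relevant pieces.) Hence $q_{\widetilde X}|_{H^2(X,\Z)}$ is non-degenerate, which is the last claim.
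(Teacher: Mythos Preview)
The paper does not supply its own proof of this lemma: it is stated as a direct citation of \cite[Lemma~3.5]{BakkerLehn2021} and no argument is given. So there is nothing in the paper to compare your proposal against.

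That said, your reconstruction is essentially the argument one finds in the cited source, and the three steps are sound. One point deserves a bit more care: in the non-degeneracy step you invoke a ``projection formula'' for $q_{\widetilde X}$, but the Beauville--Bogomolov form is not the cup product, so the usual projection formula does not apply directly. What makes the argument work is the Fujiki relation: for $\alpha\in H^2(X,\Q)$ with $q_{\widetilde X}(\pi^*\alpha)\neq 0$ one has, up to a nonzero constant,
\[
q_{\widetilde X}(\pi^*\alpha)^{n-1}\,q_{\widetilde X}(\pi^*\alpha,e)\;=\;c\int_{\widetilde X}(\pi^*\alpha)^{2n-1}\cdot e\;=\;c\int_X \alpha^{2n-1}\cdot\pi_*e\;=\;0,
\]
the middle equality being the genuine projection formula for cup product and $\pi_*e=0$ because $e$ is exceptional. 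A Zariski-density argument then gives $q_{\widetilde X}(\pi^*\alpha,e)=0$ for all $\alpha$. With this clarification your sketch is correct.
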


\subsection{Singular moduli of sheaves on symplectic surfaces}

Let $S$ be a projective K3 or abelian surface. We will be using the same notation as in Section \ref{SubSec:ModuliSheavesonSymplecticSurfaces}. Moduli spaces of stable sheaves with \emph{non-primitive} Mukai vector are always singular. However, through the works of O'Grady \cite{O'Grady1997,O'Grady2006} Lehn and Sorger \cite{LehnSorger2006} and its final form by Perego and Rapagnetta in \cite{PeregoRapagnetta2013,PeregoRapagnetta2014}, we have a clear understanding on when these spaces admit a symplectic resolution.

\begin{Thm}\label{Thm:vnonprimitivo}[O'Grady, Lehn-Sorger, Perego-Rapagnetta] Let $S$ be an abelian or projective K3 surface, $v$ a primitive Mukai vector such that $\langle v, v\rangle=2$ and $H$ a $v$-generic polarization. Then $M_{2v}(S,H)$ (resp. $K_{2v}(S,H)$ if $S$ is abelian) is a singular irreducible symplectic variety and we have the following results:
\begin{enumerate}
\item if $S$ is a K3 surface, then 
its symplectic resolution $\pi:\widetilde{M_{2v}}(S,H)\to M_{2v}(S,H)$ is a HK manifold of OG10-type. There is a Hodge isometry between $v^{\perp}$ and $H^{2}(M_{v}(S,H),\mathbb{Z})$, where the latter has a lattice structure given by Lemma \ref{Lem:BBonISV}. Moreover, if $\alpha\in v^\perp$ has divisibility $2$, then $\frac{\alpha \pm \sigma}{2}\in H^2(\widetilde{M_{2v}}(S,H),\Z)$, where $\sigma$ is the exceptional divisor of $\pi$. 
\item if $S$ is abelian, then its symplectic resolution $\pi:\widetilde{K_{2v}}(S,H)\to K_{2v}(S,H)$ is a HK manifold of OG6-type, and there is a Hodge isometry between $v^{\perp}$ and $H^{2}(K_{2v}(S,H),\mathbb{Z})$. Moreover, $H^2(\widetilde{K}_{2v}(S,H),\Z)\cong_\Hdg H^2(K_{2v}(S,H),\Z) \oplus_\perp \Z\sigma$, where $\sigma$ is the exceptional divisor of $\pi$.
\end{enumerate}
In particular, there are isomorphisms $T(\widetilde{M_{2v}}(S,H)) \cong_\Hdg T(S)$ and $T(\widetilde{K_{2v}}(S,H)) \cong_\Hdg T(S)$.
\end{Thm}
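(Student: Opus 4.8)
The plan is to reduce the statement of Theorem \ref{Thm:vnonprimitivo} to the structural results already assembled in the excerpt: the identification of the second cohomology of the singular moduli space with $v^\perp$ (due to O'Grady, Lehn--Sorger, Perego--Rapagnetta), and Lemma \ref{Lem:BBonISV}, which records that the transcendental part is unchanged under a symplectic resolution. The final "in particular" sentence is really a corollary of parts (1) and (2), so I would organize the argument so that the transcendental identification falls out at the very end with essentially no extra work.

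First I would set up the Mukai lattice $\tH(S)$ as in Section \ref{SubSec:ModuliSheavesonSymplecticSurfaces}, with its weight-two Hodge structure of K3-type, and observe that the primitive vector $v$ with $\langle v,v\rangle = 2$ lies in $\tH(S)_{\mathrm{alg}}$ (since $v_0, v_4$ are in the algebraic part by definition and $v_2\in\NS(S)$ for a Mukai vector). Hence the orthogonal complement $v^\perp\subseteq\tH(S)$ is a sub-Hodge structure of K3-type whose transcendental part coincides with that of $\tH(S)$, which in turn equals $T(S)$: indeed $T(S) = \NS(S)^{\perp}\subseteq H^2(S,\Z)$ sits inside $\tH(S)$ orthogonally to $H^0(S)\oplus H^4(S)$ and to $\NS(S)$, and is therefore orthogonal to $v$. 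So $T(v^\perp) = T(\tH(S)) = T(S)$. This is the only genuinely "Hodge-theoretic" input and it is a short lattice computation.

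The bulk of the theorem — that $M_{2v}(S,H)$ (resp. $K_{2v}(S,H)$) is a singular irreducible symplectic variety admitting a symplectic resolution which is a HK manifold of OG10-type (resp. OG6-type), together with the Hodge isometry $H^2 \cong v^\perp$ — I would simply attribute to the cited work: O'Grady \cite{O'Grady1997,O'Grady2006} for the original constructions and the deformation type, Lehn--Sorger \cite{LehnSorger2006} for the symplectic resolution in the relevant cases, and Perego--Rapagnetta \cite{PeregoRapagnetta2013,PeregoRapagnetta2014} for the identification of the BBF lattice structure on $H^2$ with $v^\perp$ and for the final statements on the singular variety itself. The statements about the exceptional divisor $\sigma$ — that $\frac{\alpha\pm\sigma}{2}$ is integral when $\alpha\in v^\perp$ has divisibility $2$ in the OG10 case, and that $H^2(\widetilde X,\Z) = H^2(X,\Z)\oplus_\perp\Z\sigma$ in the OG6 case — are also taken from Perego--Rapagnetta (and Rapagnetta \cite{Rapagnetta2007,Rapagnetta2008} for the lattice $\Lambda_{\mathrm{OG6}}$, $\Lambda_{\mathrm{OG10}}$); I would cite these precisely rather than reprove them.

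Finally, to get the "in particular" isomorphisms $T(\widetilde{M_{2v}}(S,H))\cong_\Hdg T(S)$ and $T(\widetilde{K_{2v}}(S,H))\cong_\Hdg T(S)$: by Lemma \ref{Lem:BBonISV} the pull-back $\pi^*$ is an isomorphism on transcendental parts, so $T(\widetilde{M_{2v}}(S,H)) = T(M_{2v}(S,H))$; by part (1) there is a Hodge isometry $H^2(M_{2v}(S,H),\Z)\cong_\Hdg v^\perp$, which restricts to a Hodge isometry on transcendental parts $T(M_{2v}(S,H))\cong_\Hdg T(v^\perp)$; and by the first step $T(v^\perp) = T(S)$. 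Chaining these gives the claim, and the abelian case is identical using part (2). The main obstacle, such as it is, is purely expository: making sure the divisibility-$2$ and orthogonal-splitting statements about $\sigma$ are quoted in exactly the form in which they are used later in Section \ref{Sec:OgradyExamples}; the rest is a routine assembly of cited results plus one easy observation about $v^\perp$ inside the Mukai lattice.
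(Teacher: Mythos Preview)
Your proposal is essentially correct, but note that the paper itself provides \emph{no proof} of this theorem: it is stated with the attribution ``[O'Grady, Lehn--Sorger, Perego--Rapagnetta]'' and immediately followed by the next subsection, so it functions purely as a citation of results from \cite{O'Grady1997,O'Grady2006,LehnSorger2006,PeregoRapagnetta2013,PeregoRapagnetta2014,Rapagnetta2007,Rapagnetta2008}. Your write-up --- citing the structural results for the existence of the symplectic resolution, its deformation type, the isometry $H^2\cong v^\perp$, and the behaviour of $\sigma$, then deriving the ``in particular'' clause via $T(S)\subseteq v^\perp$ and Lemma~\ref{Lem:BBonISV} --- is exactly the right way to unpack such a statement, and goes somewhat beyond what the paper does by spelling out why the transcendental identification holds.
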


\subsection{Induced HK's of O'Grady 6 type}

Let $X$ be a projective HK manifold of OG6-type. In this case the monodromy group is \emph{maximal} \cite{MongardiRapagnetta2021}. Hence the birational class of a HK manifold of OG6-type is fully determined by the Hodge structure of its Beauville-Bogomolov lattice.

\begin{Thm}\cite[Theorem 1.1]{MongardiRapagnetta2021}\label{Thm:TorelliOG6}
Let $X, Y$ be two HK manifolds of OG6-type. Then $X$ and $Y$ are bimeromorphic if and only if $H^2(X,\Z) \cong_\Hdg H^2(Y,\Z)$.
\end{Thm}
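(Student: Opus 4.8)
The plan is to handle the two implications separately. The forward implication, that bimeromorphic OG6-type manifolds have Hodge-isometric second cohomology, is nothing but Lemma~\ref{Lem:bimeromorphicisomorphiclattices}, which applies to all HK manifolds. So the real content is the converse, and the strategy is to combine two ingredients: the Hodge-theoretic Torelli theorem of Verbitsky--Markman \cite[Theorem~1.3]{Markman2011}, stating that two HK manifolds of the same deformation type are bimeromorphic if and only if there exists a \emph{parallel transport operator} $H^2(X,\Z)\to H^2(Y,\Z)$ that is a Hodge isometry; and the maximality of the monodromy group for OG6-type \cite[Theorem~1.1]{MongardiRapagnetta2021}, which says $\mathrm{Mon}^2(X)$ is the full group $O^+(H^2(X,\Z))$ of isometries preserving the orientation of the positive-definite $3$-dimensional subspaces. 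The first ingredient reduces the problem to recognizing a parallel transport operator; the second lets us produce one from an arbitrary Hodge isometry.

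Concretely, given a Hodge isometry $f\colon H^2(X,\Z)\to H^2(Y,\Z)$ I would proceed as follows. First, \textbf{normalize orientations}: since $b_2=8$ for OG6-type, $-\mathrm{id}$ is an integral isometry of $H^2(Y,\Z)$, it is still a Hodge isometry (it merely sends $\sigma_Y$ to $-\sigma_Y$, spanning the same line), and it reverses the orientation of every positive $3$-space, acting there by a map of determinant $(-1)^3=-1$; hence, replacing $f$ by $-f$ if necessary, we may assume $f$ is orientation-preserving. Second, because $X$ and $Y$ are of the same deformation type there exists at least one parallel transport operator $g_0\colon H^2(X,\Z)\to H^2(Y,\Z)$, obtained by connecting $X$ to $Y$ inside a smooth family, and every parallel transport operator is orientation-preserving. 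Then $g_0^{-1}\circ f$ lies in $O^+(H^2(X,\Z))$, hence in $\mathrm{Mon}^2(X)$ by maximality, so it is itself a monodromy operator of $X$; consequently $f=g_0\circ(g_0^{-1}\circ f)$ is again a parallel transport operator (parallel transport $X\to Y$ composed with parallel transport $X\to X$). Since $f$ is also a Hodge isometry by assumption, the Hodge-theoretic Torelli theorem yields that $X$ and $Y$ are bimeromorphic.

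The single nontrivial input is the maximality $\mathrm{Mon}^2(X)=O^+(\Lambda_{\mathrm{OG6}})$, which is \cite[Theorem~1.1]{MongardiRapagnetta2021} and is the genuinely geometric part of the argument (it rests on an analysis of OG6-type manifolds as symplectic resolutions of moduli spaces on abelian surfaces, their birational models, and the monodromy operators these produce); I would quote it rather than reprove it. Everything else is formal bookkeeping, but two points warrant a careful check: that composing a parallel transport operator with a monodromy operator again yields a parallel transport operator between the right pair of manifolds, and that the orientation conventions are consistent between the Torelli statement and the description of $O^+$ — in particular that the discrete invariant obstructing a Hodge isometry from being parallel transport is precisely orientation of positive $3$-spaces, and that it is exactly this that is absorbed by passing from $f$ to $-f$ when $b_2$ is even. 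For OG6-type neither is problematic, but this is where a careless version of the argument would break down.
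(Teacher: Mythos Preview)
The paper does not give a proof of this theorem at all: it is simply quoted from \cite[Theorem~1.1]{MongardiRapagnetta2021} and used as a black box. Your proposal is the standard and correct derivation of the birational Torelli statement from the monodromy maximality $\mathrm{Mon}^2(X)=O^+(\Lambda_{\mathrm{OG6}})$ via Markman's Hodge-theoretic Torelli theorem, which is exactly how the result is obtained in the cited reference; there is nothing to compare on the paper's side.
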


We want to study whether induced OG6-type manifolds are symplectic resolution of a singular moduli space over an abelian surface or not, similarly to Beauville's examples in the previous sections. 
The following theorem follows a similar strategy as in \cite[Theorem 1.1]{Grossi22}.

\begin{Thm}\label{Thm:InducedOG6}
Let $X$ be a projective HK manifold of OG6-type induced by $T(A)$, where $A$ an abelian surface. Then the following are equivalent:
\begin{enumerate}
    \item There exists an algebraic class $\sigma\in \NS(X)$ of square $-2$ and divisibility $2$.
    \item $X$ is birational to a $\widetilde{K_{2v}}(S,H)$ for some $v\in \tH(A)$ and some $v$-generic polarization $H$.
\end{enumerate}
\end{Thm}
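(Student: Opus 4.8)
The plan is to follow the template of Theorems \ref{Thm:InducedK3n} and the Kum$^n$-analogue, but now carrying along the extra \emph{algebraic} datum --- the class $\sigma$ of square $-2$ and divisibility $2$ --- because, unlike in the smooth cases, the transcendental Hodge structure alone does not pin down the birational type. The implication $(2)\Rightarrow(1)$ should be immediate: if $X$ is birational to $\widetilde{K_{2v}}(S,H)$ then by Lemma \ref{Lem:bimeromorphicisomorphiclattices} there is a Hodge isometry $H^2(X,\Z)\cong_\Hdg H^2(\widetilde{K_{2v}}(S,H),\Z)$, and by Theorem \ref{Thm:vnonprimitivo}(2) the latter lattice splits off $\Z\sigma$ orthogonally, with $\sigma$ the exceptional divisor of the symplectic resolution; this $\sigma$ is algebraic (it is a divisor class), has square $q_X(\sigma)=-2$, and one checks from the orthogonal splitting $H^2\cong H^2(K_{2v})\oplus_\perp \Z\sigma$ together with the description of the discriminant of $\Lambda_{\mathrm{OG6}}=U^{\oplus 3}\oplus\langle-2\rangle^{\oplus 2}$ that $\sigma$ has divisibility $2$. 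So the real content is $(1)\Rightarrow(2)$.

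For $(1)\Rightarrow(2)$: assume $T(X)\cong_\Hdg T(A)$ and that such a $\sigma\in\NS(X)$ exists. The strategy is to exhibit a Mukai vector $v\in\tH(A)$ with $\langle v,v\rangle=2$, $v$ primitive, such that $v^\perp\cong_\Hdg H^2(K_{2v}(A,H),\Z)$ matches $\sigma^{\perp_{H^2(X,\Z)}}$, and then to use the divisibility-$2$ class $\sigma$ to reconstruct all of $H^2(X,\Z)$ as $H^2(\widetilde{K_{2v}}(A,H),\Z)$ via the formula $H^2(\widetilde{K_{2v}})\cong H^2(K_{2v})\oplus_\perp\Z\sigma$, finally invoking the Torelli theorem \ref{Thm:TorelliOG6} (monodromy is maximal for OG6) to conclude birationality. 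Concretely: set $\Lambda_0:=\sigma^{\perp}$ inside $H^2(X,\Z)$; this is a lattice isometric to $U^{\oplus 3}\oplus\langle-2\rangle$ (the orthogonal complement of a divisibility-$2$ square-$(-2)$ vector in $\Lambda_{\mathrm{OG6}}$ --- a lattice-theoretic computation using Appendix \ref{Sec:LatticeTheory}), it carries the Hodge structure of $X$, and $T(X)=T(A)$ is a primitive sublattice of it. Now embed $\Lambda_0\hookrightarrow\widetilde{\Lambda_{\mathrm{Ab}}}$: since $T(A)^\perp$ in $\tH(A)$ contains the hyperbolic plane $H^0(A)\oplus H^4(A)$, Theorem \ref{Nik_1.14.4} gives uniqueness of the embedding of $T(A)$, so there is an isometry $\varphi:\widetilde{\Lambda_{\mathrm{Ab}}}\to\tH(A)$ identifying the Hodge structures; set $v:=\varphi(\Lambda_0)^\perp\in\tH(A)_{\mathrm{alg}}$, which is primitive with $\langle v,v\rangle=-q(\sigma)=2$ after adjusting sign so that $v$ is a Mukai vector. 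By Theorem \ref{Thm:vnonprimitivo}(2), for $v$-generic $H$ we get $v^\perp\cong_\Hdg H^2(K_{2v}(A,H),\Z)$, so $\varphi$ restricts to a Hodge isometry $\Lambda_0\cong_\Hdg H^2(K_{2v}(A,H),\Z)$. Adding back $\Z\sigma$ on the $X$-side and the exceptional divisor on the moduli side, and checking these orthogonal extensions agree as Hodge structures (both are $\Lambda_0\oplus_\perp\langle-2\rangle$ with the $\langle-2\rangle$ part algebraic, and by uniqueness of such overlattices --- again Appendix \ref{Sec:LatticeTheory}, noting divisibility $2$ forces the gluing), we obtain $H^2(X,\Z)\cong_\Hdg H^2(\widetilde{K_{2v}}(A,H),\Z)$. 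Theorem \ref{Thm:TorelliOG6} then yields that $X$ and $\widetilde{K_{2v}}(A,H)$ are bimeromorphic.

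The main obstacle I anticipate is the lattice-theoretic bookkeeping around the divisibility-$2$ class $\sigma$: one must check both that $\sigma^\perp$ in $\Lambda_{\mathrm{OG6}}$ is the expected lattice $U^{\oplus 3}\oplus\langle-2\rangle$ (equivalently $\Lambda_{\mathrm{Kum}^2}$-like up to the relevant identification) and that the way $\Z\sigma$ glues back --- i.e.\ the isometry class of the overlattice $H^2(X,\Z)$ of $\sigma^\perp\oplus\Z\sigma$ --- is uniquely determined, so that the extension on the moduli-space side is forced to coincide with it as a Hodge structure. A subtlety is that there could in principle be a second $O(\widetilde{\Lambda_{\mathrm{Ab}}})$-orbit issue or a sign/orientation mismatch analogous to the $A$ versus $A^\vee$ phenomenon in the Kum$^n$ case; however, since the OG6 monodromy group is the full $O^+$ \cite{MongardiRapagnetta2021}, no such correction is needed --- any Hodge isometry of $H^2$ is realized, which is precisely why the statement only asks for birationality to $\widetilde{K_{2v}}(A,H)$ and not also to $\widetilde{K_{2v}}(A^\vee,H)$. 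One should also verify that $\langle v,v\rangle=2$ exactly (not some larger even number): this follows because $\sigma$ has divisibility $2$ and square $-2$, so $v$ is itself primitive of square $2$ rather than $v=2w$ with $w^2=2$; getting this primitivity right is where the divisibility hypothesis in $(1)$ is essential.
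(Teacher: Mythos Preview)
Your proposal is correct and takes essentially the same approach as the paper: identify $\sigma^\perp\cong U^{\oplus 3}\oplus\langle-2\rangle$ with the second cohomology of a singular moduli space $K_{2v}(A,H)$, match Hodge structures via $T(X)\cong_\Hdg T(A)$, extend orthogonally by $\Z\sigma$ to recover $H^2(\widetilde{K_{2v}})$, and conclude by the maximal-monodromy Torelli theorem \ref{Thm:TorelliOG6}. The only cosmetic difference is that you construct $v$ explicitly by embedding $\Lambda_0$ into the Mukai lattice $\widetilde{\Lambda_{\mathrm{Ab}}}$ (mirroring the Section~3 proofs), whereas the paper proceeds more directly by invoking Theorem \ref{Nik_1.15.1} to classify the two orbits of $\langle-2\rangle\hookrightarrow\Lambda_{\mathrm{OG6}}$ and then simply identifying $\sigma^\perp$ with $H^2(K_v(A,H),\Z)$ as abstract lattices; your remark that maximal monodromy obviates any $A$ versus $A^\vee$ correction is accurate and worth keeping.
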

\begin{proof}
The second item implies the first by Theorem \ref{Thm:vnonprimitivo}. For the other implication, notice that by Theorem \ref{Nik_1.15.1} there exist exactly two non-isomorphic primitive embeddings of the lattice $\langle-2\rangle$ in $ \Lambda_{\mathrm{OG6}}$: up to isometries of the latter, to realize the first embedding we can choose as generating class $\sigma=u_1-u_2$ (where $\langle u_1,u_2\rangle=U$), which has divisibility 1, while in the other case, $\sigma$ is the generator of one of the orthogonal components $\langle-2\rangle$, so that it has divisibility $2$. 

In the latter case $\NS(X)$ has $\sigma$ as an orthogonal summand. Let $\Lambda':= \sigma^\perp$ be the orthogonal complement of $\sigma$ in $\Lambda_{\mathrm{OG6}}$, then we have
\begin{equation}
    \Lambda' = \Lambda_{\mathrm{Ab}}\oplus \langle -2 \rangle \cong H^2(K_v(A,H),\Z).
\end{equation}
where the last isomorphism is as abstract lattices. If we let $\pi: \Lambda_{\mathrm{OG6}} \to \Lambda'$ be the projection map, then $T(X)$  is isomorphic to its image under $\pi$. Since $T(X)\cong T(A)$, this defines a level $2$ Hodge structure on $H^2(K_v(A,H),\Z)$ that by construction lifts to an isomorphism of level $2$ Hodge structures
\begin{equation}
   H^2(X,\Z)\cong H^2(\widetilde{K}_v(A,H),\Z).
\end{equation}
By Theorem \ref{Thm:TorelliOG6} we have that $X$ is birational to $\widetilde{K}_v(A,H)$.
\end{proof}

In the rest of this section we will study several examples of induced HK of OG6-type which are not desingularizations of moduli spaces. We first notice that there are strong conditions a lattice has to satisfy to be the transcendental lattice of an abelian surface.

\begin{Thm}[\cite{Morrison84}, Theorem 1.6 and Corollary 2.6]\label{AbTrans}
Let $T$ be an even lattice of signature $(2,k)$.
\begin{enumerate}
    \item If $k=0,1$ there is a primitive embedding $T$ in $\Lambda_{\mathrm{Ab}}$.
    \item If $k=2$ there is a primitive embedding $T$ in $\Lambda_{\mathrm{Ab}}$ if and only if $T\cong U\oplus T'$.
    \item If $k=3$ there is a primitive embedding $T$ in $\Lambda_{\mathrm{Ab}}$ if and only if $T\cong U^{\oplus 2}\oplus T'$.
\end{enumerate}
Moreover there exists a (not necessarily unique) abelian surface $A$ such that $T(A)\cong_\Hdg T$.
\end{Thm}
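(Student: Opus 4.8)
We sketch how one would prove Theorem~\ref{AbTrans}; the engine is Nikulin's calculus of primitive embeddings through discriminant forms (Theorem~\ref{Nik_1.14.4} and Appendix~\ref{Sec:LatticeTheory}), together with the fact that $\Lambda_{\mathrm{Ab}}=U^{\oplus 3}$ is the unique even unimodular lattice of signature $(3,3)$. The dictionary we will use repeatedly: a primitive embedding $S\hookrightarrow U^{\oplus n}$ of an even lattice of signature $(s_+,s_-)$ is the datum of an even lattice of signature $(n-s_+,n-s_-)$ with discriminant form $-q_S$ (the orthogonal complement), and it is unique up to $O(U^{\oplus n})$ as soon as $n>s_{\pm}$ and $\ell(A_S)+2\le 2n-\rank(S)$. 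Since $U^{\oplus 3}$ carries only three negative directions, a primitive embedding of $T$ forces $k\le 3$, so only the cases $k=0,1,2,3$ occur.

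For part~(1) we would note that $T(-1)$ has signature $(k,2)$ and discriminant form $-q_T$, so gluing $T\oplus T(-1)$ along the diagonal of $A_T\oplus A_{T(-1)}$ — an isotropic Lagrangian subgroup, since $q_T\oplus(-q_T)$ kills it — produces an even unimodular overlattice $L$ of signature $(2+k,2+k)$ in which $T$ is primitive (as usual in this doubling). Because $2+k\le 3$ we get $L\cong U^{\oplus(2+k)}$, and the latter embeds into $U^{\oplus 3}$ as an orthogonal direct summand; composing gives the primitive embedding of $T$ into $\Lambda_{\mathrm{Ab}}$.

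For parts~(2) and~(3) set $m=k-1\in\{1,2\}$. The implication $(\Leftarrow)$ is immediate: if $T\cong U^{\oplus m}\oplus T'$ then $T'$ has signature $(3-k,1)$, so $T'(-1)$ has signature $(1,3-k)$ and discriminant form $-q_T$, which is exactly the predicted orthogonal complement, hence a primitive embedding $T\hookrightarrow U^{\oplus 3}$ exists. For $(\Rightarrow)$, suppose $T\hookrightarrow U^{\oplus 3}$ is primitive and put $K:=T^{\perp}$, of rank $4-k\le 2$ and signature $(1,3-k)$ with $q_K\cong -q_T$; then $\ell(A_K)=\ell(A_T)\le\rank(K)\le 2$, so Nikulin's uniqueness applies to $K$ and there is a single $O(U^{\oplus 3})$-orbit of primitive embeddings of $K$ into $U^{\oplus 3}$. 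One such embedding has complement $T$; a second is obtained by gluing $K$ to $M:=U^{\oplus m}\oplus K(-1)$ — which has signature $(2,k)$ and discriminant form $-q_K\cong q_T$ — along the diagonal, producing once more an even unimodular lattice of signature $(3,3)$, hence $\cong U^{\oplus 3}$, with $K$ primitive and orthogonal complement $M$. By uniqueness an isometry of $U^{\oplus 3}$ carries one embedding to the other, hence $T\cong M=U^{\oplus m}\oplus K(-1)$, of the asserted shape.

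For the ``Moreover'', given $T$ with its weight-two Hodge structure of K3-type and a primitive embedding $\iota\colon T\hookrightarrow U^{\oplus 3}$ (supplied by the first part in all listed cases), we extend the Hodge structure to $U^{\oplus 3}$ by declaring $\iota(T)^{\perp}$ of type $(1,1)$. Identifying $U^{\oplus 3}$ with $\wedge^2 W_0$ for a rank-$4$ lattice $W_0$ via the wedge-product form, the generator $\omega$ of the $(2,0)$-part satisfies $\langle\omega,\omega\rangle=0$, hence is a decomposable bivector $v_1\wedge v_2$; the plane $W=\langle v_1,v_2\rangle$ defines a weight-one Hodge structure on $W_0$ (from $\langle\omega,\bar\omega\rangle>0$ one gets $W\cap\bar W=0$), hence a complex $2$-torus $A$ with $H^2(A,\Z)\cong U^{\oplus 3}$ carrying the chosen Hodge structure; since $\iota(T)^{\perp}$ has signature $(1,3-k)$ it contains a class of positive square, which up to sign polarizes $A$, so $A$ is an abelian surface, and for a very general Hodge structure on $T$ one has $\NS(A)=\iota(T)^{\perp}$, whence $T(A)\cong_{\Hdg}\iota(T)\cong_{\Hdg}T$. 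We expect the genuinely non-formal point to be precisely this last step — the surjectivity of the period map for complex $2$-tori, repackaged through the decomposable-bivector picture, plus the observation that $T$ is realized as a transcendental lattice only after a genericity choice; the lattice statements~(1)--(3) should fall out mechanically from the embedding dictionary, the one slightly clever move being the choice of the comparison lattice $M=U^{\oplus m}\oplus K(-1)$ in $(\Rightarrow)$, which lets us invoke uniqueness of embeddings rather than uniqueness in a genus.
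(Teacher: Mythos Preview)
The paper does not prove this statement itself; it is quoted from \cite{Morrison84} without argument, so there is no in-paper proof to compare against. Your sketch is largely correct and follows the expected Nikulin route, but there is one genuine gap in the $(\Rightarrow)$ direction of part~(2).

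You assert that Theorem~\ref{Nik_1.14.4} gives a unique $O(U^{\oplus 3})$-orbit of primitive embeddings of $K=T^{\perp}$ into $U^{\oplus 3}$, justified only by $\ell(A_K)\le\rank(K)\le 2$. This is fine for $k=3$ (then $\rank K=1$ and the rank gap $2+k=5$ strictly dominates $2+\ell$ at every prime), but for $k=2$ it can happen that $\ell\bigl((A_K)_2\bigr)=2$, so that $\rank(U^{\oplus 3})-\rank(K)=4=2+\ell\bigl((A_K)_2\bigr)$; condition~(3) of Theorem~\ref{Nik_1.14.4} then demands that $q_K$ split off $u_2$ or $v_2$. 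This need not hold: for $K=\langle 2\rangle\oplus\langle -2\rangle$ the $2$-part of $q_K$ takes the values $0,\,1/2,\,3/2,\,0$, which is neither $u_2$ nor $v_2$. So your appeal to embedding-uniqueness is not justified as written.

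The repair is immediate and in fact simpler than the detour through embeddings of $K$: you only need $T\cong M:=U^{\oplus(k-1)}\oplus K(-1)$, and both lie in the same genus (signature $(2,k)$, discriminant form $-q_K$). Since $T$ is indefinite of rank $2+k\ge 4$ with $\ell(A_T)=\ell(A_K)\le 4-k$, one has $\rank(T)-\ell(A_T)\ge 2k-2\ge 2$, and Proposition~\ref{MM_VIII.4.2} gives uniqueness in the genus, hence $T\cong M$ directly. Your ``Moreover'' via decomposable bivectors and surjectivity of the period map for $2$-tori is correct; to get $T(A)=T$ rather than a proper primitive sublattice you need the given Hodge structure on $T$ to be irreducible, which is automatic when $T$ arises as a transcendental lattice --- the only way the theorem is actually used in the paper.
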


If an induced HK of OG6-type does not satisfy the orthogonality condition in Theorem \ref{Thm:vnonprimitivo}, then it cannot arise from a moduli space. We are going to give examples of this occurrence for each of the cases allowed by Theorem \ref{AbTrans}.

\begin{Ex}\label{controes}
Fix generators $u_1,u_2, v_1,v_2, w_1,w_2,a,b$ of the OG6-lattice $\Lambda_{\mathrm{OG6}}$ where $u_1,...,w_2$ are generators of the hyperbolic planes and $a,b$ are the two classes of square $-2$ and divisibility $2$ (see Table \ref{Table:BBformknowndeformationtypes}).
\begin{enumerate}
\item $\rank(T)=5$: Consider the lattice $T=U\oplus U\oplus\langle-4\rangle$: embed it in $\Lambda_{\mathrm{OG6}}$ as $\langle u_1,u_2,v_1,v_2,\\a+b\rangle$. Then $T^{\perp}=U\oplus\langle-4\rangle$, which does not satisfy the condition of Theorem \ref{Thm:vnonprimitivo}.
Indeed, a primitive class of even divisibility is of the type $(2k+1)(a-b)+2(hw_1+jw_2)$ with $k,h,j$ integers, but none of these classes has square $-2$ (to this end, odd multiples of $w_1,w_2$ are needed).
\item $\rank(T)=4$: Consider $T=U\oplus\langle6\rangle\oplus\langle-10\rangle$: embed it in $\Lambda_{\mathrm{OG6}}$ as $\langle u_1,u_2,2(v_1+v_2)+a,2(w_1-w_2)+b\rangle$; then $T^{\perp}=A_2\oplus\begin{bmatrix}-2 & 3\\ 3 &-2\end{bmatrix}$, which does not admit any primitive embedding of $\langle-2\rangle$ with divisibility 2.
\item $\rank(T)=3$: Consider $T=U\oplus\langle4\rangle$: embed it in $\Lambda_{\mathrm{OG6}}$ as $\langle u_1,u_2,2(v_1+v_2)+a+b\rangle$; then $T^{\perp}=U\oplus A_3$, which does not admit any primitive embedding of $\langle-2\rangle$ with divisibility 2.
\item $\rank(T)=2$: Consider $T=\langle 6\rangle^{\oplus 2}$: embed it in $\Lambda_{\mathrm{OG6}}$ as $\langle 2(u_1+u_2)+a,2(v_1+v_2)+b\rangle$: then $T^{\perp}=U\oplus A_2^{\oplus 2}$, which does not admit any primitive embedding of $\langle-2\rangle$ with divisibility 2.
\end{enumerate} 
\end{Ex}

The following theorem provides a characterization of induced HKs with transcendental $T_d= U\oplus U\oplus \langle-2d\rangle$, $d\in \mathbb N_{\neq 0}$, as in case (3) of Theorem \ref{AbTrans}.

\begin{Thm}
Let $X$ be an induced HK of OG6-type such that $\rank(NS(X))=3$. Then $X$ does not arise from a moduli space of sheaves on an abelian surface if and only if it holds \begin{equation}\label{controes.general1}
d=4k+2, \qquad
T(X)\simeq U\oplus U\oplus \langle-2d\rangle, \qquad \NS(X)\simeq \begin{bmatrix} 2k& 1 &1\\ 1 &-2&0\\1 &0 &-2\end{bmatrix}.
\end{equation}
\end{Thm}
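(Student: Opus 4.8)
The plan is to combine the OG6 Torelli theorem (Theorem~\ref{Thm:TorelliOG6}) with Theorem~\ref{Thm:InducedOG6}: since a HK manifold of OG6-type is determined up to birational equivalence by the Hodge structure on $H^2(X,\Z)$, and arising from a moduli space over an abelian surface is (by Theorem~\ref{Thm:InducedOG6}) equivalent to the existence of a class $\sigma\in\NS(X)$ of square $-2$ and divisibility $2$, the statement reduces to a purely lattice-theoretic computation: classify, among induced OG6 manifolds with $\rank(\NS(X))=3$, those for which $\NS(X)$ admits \emph{no} primitive vector of square $-2$ and divisibility $2$ in $\Lambda_{\mathrm{OG6}}$. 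First I would fix notation as in Example~\ref{controes}, writing $\Lambda_{\mathrm{OG6}}=U^{\oplus 3}\oplus\langle a\rangle\oplus\langle b\rangle$ with $a^2=b^2=-2$, and recall that since $X$ is induced, $T(X)\cong_\Hdg T(A)$ for an abelian surface $A$; as $T(X)$ has signature $(2,1)$, Theorem~\ref{AbTrans}(3) forces $T(X)\cong U^{\oplus 2}\oplus\langle -2d\rangle$ for some $d\geq 1$, which pins down the transcendental lattice. So the whole problem is about the finitely many possibilities for $\NS(X)=T(X)^{\perp}$ inside $\Lambda_{\mathrm{OG6}}$.

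Next I would enumerate the primitive embeddings $T_d=U^{\oplus 2}\oplus\langle -2d\rangle\hookrightarrow\Lambda_{\mathrm{OG6}}$ up to $O(\Lambda_{\mathrm{OG6}})$ using Nikulin's theory (Theorem~\ref{Nik_1.15.1}, Theorem~\ref{Nik_1.14.4}), or equivalently classify the possible orthogonal complements. One hyperbolic plane of $T_d$ can always be split off against one copy of $U$ in $\Lambda_{\mathrm{OG6}}$; after removing it, I am left classifying primitive embeddings of $U\oplus\langle -2d\rangle$ into $U^{\oplus 2}\oplus\langle a\rangle\oplus\langle b\rangle$, and then a second hyperbolic plane can be absorbed, reducing to embedding $\langle -2d\rangle$ primitively into $U\oplus\langle a\rangle\oplus\langle b\rangle$ (a rank-$3$ lattice). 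Writing a generator as $\alpha u_1+\beta u_2+\gamma a+\delta b$ with $\gcd=1$ and square $-2d$, I would compute the orthogonal complement $\NS(X)$ as a rank-$3$ lattice and, crucially, determine when it contains a primitive class of square $-2$ and divisibility $2$ (divisibility computed in $\Lambda_{\mathrm{OG6}}$, not in $\NS(X)$; here Lemma~\ref{Lem:BBonISV}-style bookkeeping of the discriminant group matters). The congruence $d\equiv 2\pmod 4$ should emerge from analyzing when the discriminant form of $\NS(X)$ admits an isotropic-plus-norm-$(-2)$ element of the right order; the displayed Gram matrix $\begin{bmatrix} 2k& 1 &1\\ 1 &-2&0\\1 &0 &-2\end{bmatrix}$ with $d=4k+2$ is exactly the complement of the embedding $\langle -2d\rangle\ni \text{gen}\mapsto$ (a $+$ b $+$ something in $U$), generalizing case (3) of Example~\ref{controes} (which is $k=1$, $d=6$).

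For the forward direction I would argue that if $\NS(X)$ is \emph{not} isometric to that matrix (equivalently, the embedding is of any of the other types, or $d\not\equiv 2\pmod 4$), then one can always exhibit a $(-2)$-class of divisibility $2$: concretely, if $d$ is odd one uses a class involving $a-b$ or $a+b$ together with a suitable combination from $U^{\oplus 2}$, and if $d\equiv 0\pmod 4$ a parallel construction works, so that by Theorem~\ref{Thm:InducedOG6} $X$ does arise from a moduli space. For the converse, when $\NS(X)$ is that specific matrix, I must show \emph{no} primitive class of square $-2$ has divisibility $2$ in $\Lambda_{\mathrm{OG6}}$: this is the same style of argument as in Example~\ref{controes}(1), writing the general divisibility-$2$ primitive class as an odd multiple of $(a-b)$ plus twice something and checking its square is never $-2$ — the parity obstruction $d=4k+2$ is precisely what blocks it. The main obstacle I anticipate is the careful classification of primitive embeddings of $T_d$ up to $O(\Lambda_{\mathrm{OG6}})$ and the correct tracking of \emph{divisibility in the ambient lattice} through the discriminant-group/glue-vector formalism — the existence statements are easy once the right candidate class is guessed, but ruling out \emph{all} such classes in the exceptional case, uniformly in $k$, and showing the congruence condition is both necessary and sufficient, is the delicate bookkeeping at the heart of the proof.
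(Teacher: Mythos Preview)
Your strategy is correct and matches the paper's: reduce via Theorem~\ref{Thm:InducedOG6} to classifying primitive embeddings $T_d=U^{\oplus 2}\oplus\langle -2d\rangle\hookrightarrow\Lambda_{\mathrm{OG6}}$ using Nikulin's Proposition~\ref{Nik_1.15.1}, and then check which orthogonal complements lack a $(-2)$-class of divisibility~$2$. The paper organises the enumeration slightly more cleanly than your coordinate plan: rather than splitting off the two copies of $U$ and parametrising a generator in $U\oplus\langle a\rangle\oplus\langle b\rangle$, it runs Nikulin's classification directly on discriminant groups ($A_{T_d}\cong\Z/2d\Z$ against $A_{\Lambda_{\mathrm{OG6}}}\cong(\Z/2\Z)^2$), which immediately splits into the four residues of $d$ modulo~$4$ and shows that only $d\equiv 2\ (\mathrm{mod}\ 4)$ yields a second embedding whose complement is the displayed matrix; it also verifies each candidate $\NS(X)$ is unique in its genus (via Proposition~\ref{MM_VIII.4.2}), a point your sketch leaves implicit but which is needed to conclude the isometry type is determined.

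One small slip: the rank-$5$ transcendental case you want to generalise is case~(1) of Example~\ref{controes} (namely $T=U^{\oplus 2}\oplus\langle -4\rangle$, i.e.\ $d=2$, $k=0$), not case~(3), which has $\rank(T)=3$; the paper's Remark immediately after the theorem confirms the $k=0$ specialisation recovers Example~\ref{controes}(1).
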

\begin{proof}
The proof relies heavily on Theorem \ref{Nik_1.15.1}. If $d=0,3\ (\mathrm{mod\ }4)$ there exists only one possible embedding of $T_d$ in $\Lambda_{\mathrm{OG6}}$ up to isometries of the latter, given by $T_d\hookrightarrow U^{\oplus 3}\hookrightarrow \Lambda_{\mathrm{OG6}}$: the corresponding $\NS(X)$ is $S_d := \langle 2d\rangle\oplus\langle -2\rangle^{\oplus 2}$, which obviously contains $\sigma$ as in the statement. Moreover, $S_d$ is unique in its genus (see for instance Proposition \ref{MM_VIII.4.2}).\\
If $d=1\ (\mathrm{mod\ }4)$, there are two possible embeddings of $T_d$ in $\Lambda_{\mathrm{OG6}}$: one as above with orthogonal complement $S_d$, the other described by $\langle u_1,u_2,v_1,v_2,\frac{d-1}{2}w_1+\frac{d-1}{2}w_2+\frac{d+1}{2}a\rangle$, which gives as $\NS(X)$ a lattice of the form
$S'\oplus\langle-2\rangle$, with $sign(S')=(1,1)$, so it satisfies the statement. The lattice $\NS(X)$ is again unique in its genus, since its discriminant form is $q_S=[-1/2d]$ so $rk\NS(X)=2+\ell(\NS(X))$ (see Proposition \ref{MM_VIII.4.2}).\\
If $d=2\ (\mathrm{mod\ }4)$ there are again two possible embeddings, one of which gives $\NS(X)=S_d$, and the other described by $\langle u_1,u_2,v_1,v_2,2kw_1-2kw_2+a+b\rangle$, where $k=(d-2)/4$:
in this case it holds
\begin{equation}\label{controes.general2}
\NS(X)=\begin{bmatrix} 2k& 1 &1\\ 1 &-2&0\\1 &0 &-2\end{bmatrix}
\end{equation}
which is unique in its genus (its discriminant form is $[(1-d)/2d]$). This lattice does not contain any classes $\sigma$ that satisfy the requirements: indeed, calling  $\{b_1,b_2,b_3\}$ a basis of $\NS(X)$, to have even divisibility it has to be $\sigma=\sum x_ib_i$ with $x_1$ even, and $x_2=_2x_3$: none of these classes however have square $-2$. 
\end{proof}

\begin{Rem}
Taking $k=0$, choosing the basis $\{b_1,b_2+b_1,b_3-b_2-2b_1\}$ we get $\NS(X)=U\oplus\langle-4\rangle$ as in Example \ref{controes}.
\end{Rem}

\begin{Rem}
Let $d=2$: if $X$ does not arise from a moduli space, then $\NS(X)=U\oplus\langle-4\rangle=\langle u_1,u_2,x\rangle$; the movable cone coincides with the positive cone, because there are no classes of divisibility 2; the walls of the K\"ahler cone are $\alpha^\perp$, for any $\alpha$ of the form $\alpha=au_1+bu_2+cx$ such that $c^2-ab=1$; on the other hand, if $\NS(X)=\langle4\rangle\oplus\langle-2\rangle^{\oplus 2}=\langle y,\alpha_1,\alpha_2\rangle$, then the walls are of the form $ay+b\alpha_1+c\alpha_2$ such that either $b^2+c^2=2a^2+1$, or the following conditions hold: $b,c$ are odd, and $b^2+c^2=2a^2+2$. The movable cone coincides with the ample cone (all classes of square $-2$ have divisibility 2).
\end{Rem}
\begin{Prop}
If $X$ is of OG6-type, $T(X)=U\oplus U\oplus\langle-4\rangle$
 and $\NS(X)=U\oplus\langle-4\rangle$, then $Bir(X)$ is infinite.
\end{Prop}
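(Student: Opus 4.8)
The plan is to show that the Beauville--Bogomolov lattice $H^2(X,\Z)$ admits an isometry group that is large (in fact, arithmetic of rank $\geq 1$), and then feed this into the Torelli theorem for OG6-type manifolds. More precisely, I would argue as follows. Since $X$ is of OG6-type, $H^2(X,\Z)\cong \Lambda_{\mathrm{OG6}} = U^{\oplus 3}\oplus\langle-2\rangle^{\oplus 2}$, with $\NS(X)=U\oplus\langle-4\rangle$ and $T(X)=U\oplus U\oplus\langle-4\rangle$ the orthogonal splitting realizing this particular embedding. By Theorem \ref{Thm:TorelliOG6}, two OG6-type manifolds are bimeromorphic iff their second cohomologies are Hodge isometric, so to produce infinitely many birational automorphisms of $X$ it suffices to produce an infinite group of Hodge isometries of $H^2(X,\Z)$ that are realized by monodromy (using that the monodromy group of OG6-type is maximal, \cite{MongardiRapagnetta2021}), and then invoke the standard fact that $\mathrm{Bir}(X)$ maps to $O(H^2(X,\Z))$ with kernel a finite group (indeed trivial, as $b_2$ is large enough).

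The key step is exhibiting infinitely many Hodge isometries of $H^2(X,\Z)$. A Hodge isometry must preserve the Hodge decomposition, hence preserve $T(X)=U\oplus U\oplus\langle-4\rangle$ and act on it; since $T(X)$ is itself an indefinite lattice of signature $(2,1)$, its orthogonal group $O(T(X))$ is infinite, but a Hodge isometry of $T(X)$ must moreover fix the period line, i.e. commute with the Hodge structure. So instead I would work on the algebraic side: any isometry $g\in O(\NS(X))$ extends (after possibly adjusting by a glueing along the discriminant) to an isometry of $\Lambda_{\mathrm{OG6}}$ acting trivially on $T(X)$, hence acting as a Hodge isometry; and $O(\NS(X))=O(U\oplus\langle-4\rangle)$ is infinite. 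The point is that $\NS(X)$ has signature $(1,2)$ and is not of the special "rank $2$ with no isometries" type: $O(U\oplus\langle-4\rangle)$ contains a free product / infinite dihedral part coming from the two rulings of $U$ together with reflections, so it is infinite. One must then check (a) that enough such $g$ have $O(\NS(X))\to O(q_{\NS(X)})$ landing in the subgroup that glues compatibly with $\mathrm{id}_{T(X)}$ (or more simply, that the subgroup of $O(\NS(X))$ acting as $\pm1$ on the discriminant $\Z/4$ is still infinite, which is clear since it has finite index), and (b) that such extended isometries lie in the monodromy group, which by the maximality result of Mongardi--Rapagnetta is automatic as long as they are orientation-preserving, and we can pass to a finite-index subgroup to ensure this.

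The remaining step is to descend from Hodge isometries to genuine birational self-maps. By Theorem \ref{Thm:TorelliOG6}, each monodromy Hodge isometry fixing the period is induced by a bimeromorphic self-map of $X$, and the assignment $\mathrm{Bir}(X)\to O(H^2(X,\Z))$ has finite (conjecturally trivial) kernel; since the image contains the infinite group constructed above, $\mathrm{Bir}(X)$ is infinite. The main obstacle I anticipate is purely lattice-theoretic bookkeeping in the key step: verifying that the infinite group $O(\NS(X))$ (or a finite-index subgroup of it) genuinely lifts to orientation-preserving monodromy isometries of $\Lambda_{\mathrm{OG6}}$ that act as the identity on $T(X)$ — one must control the glue map on the discriminant group $A_{\NS(X)}\cong A_{T(X)}\cong\Z/4$ and ensure the two pieces match, using the discriminant-form machinery of Nikulin recalled in Appendix \ref{Sec:LatticeTheory} (in particular Theorem \ref{Nik_1.15.1} and the gluing description). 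An alternative, perhaps cleaner, route avoiding orientation issues is to directly exhibit an explicit infinite-order isometry of $U\oplus\langle-4\rangle$ — e.g. the composition of the swap of the two generators of $U$ with a suitable Eichler transvection — check it acts trivially on the discriminant, extend by the identity on $T(X)$, verify orientation-preservation and membership in $\mathrm{Mon}^2(X)$, and conclude it generates an infinite cyclic subgroup of $\mathrm{Bir}(X)$.
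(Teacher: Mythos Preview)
Your route is genuinely different from the paper's. The paper argues via the Kawamata--Morrison cone conjecture (Amerik--Verbitsky): it shows by an explicit Diophantine computation that infinitely many $(-2)$-walls actually meet the movable cone (which equals the positive cone here), so there are infinitely many K\"ahler chambers and hence $\mathrm{Bir}(X)$ must be infinite. You instead construct an infinite subgroup of $O(\NS(X))$, lift it to monodromy Hodge isometries of $H^2(X,\Z)$, and try to realise these by birational maps. This is more structural and avoids the wall computation; conversely, the paper's argument gives slightly more, namely a concrete picture of the chamber structure.

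There is, however, a genuine gap in your Torelli step. Theorem~\ref{Thm:TorelliOG6} only says that $X$ and $Y$ are bimeromorphic if and only if \emph{some} Hodge isometry $H^2(X,\Z)\cong_\Hdg H^2(Y,\Z)$ exists; it does \emph{not} assert that a given monodromy Hodge isometry $g$ of $H^2(X,\Z)$ is of the form $f_*$ for some $f\in\mathrm{Bir}(X)$. In general this fails: reflections in stably prime exceptional classes are monodromy Hodge isometries that are not induced by birational self-maps. So from infinitely many Hodge isometries you cannot directly conclude that $\mathrm{Bir}(X)$ is infinite---a priori they might all differ from one another by elements of $W_{\mathrm{Exc}}$.

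The fix is already in the paper, in the Remark immediately preceding the Proposition: for this $X$ there are no classes of divisibility $2$ in $\NS(X)$, so the movable cone coincides with the positive cone and hence $W_{\mathrm{Exc}}$ is trivial. By Markman's theory (see e.g.\ \cite[Lemma~6.23]{Markman2011}), the fundamental exceptional chamber is then the whole positive cone, so every orientation-preserving monodromy Hodge isometry lies in $\mathrm{Mon}^2_{\mathrm{Bir}}(X)$ and is induced by a birational map. With this additional input your argument goes through: $O^+(U\oplus\langle-4\rangle)$ is infinite (your explicit Eichler transvection has infinite order and acts trivially on the discriminant $\Z/4\Z$), the glueing and orientation constraints cut out a finite-index subgroup, and the kernel of $\mathrm{Bir}(X)\to O(H^2(X,\Z))$ is finite. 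Note that both your corrected argument and the paper's proof ultimately depend on that same Remark.
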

\begin{proof}
By the solution of the Kawamata-Morrison Cone Conjecture \cite{AmerikVerbitsky2020} we need to prove that there exist an infinite number of walls inside the movable cone. By the previous remark it is therefore sufficient to show that for any class $\alpha$ of the form $\alpha=au_1+bu_2+cx$ such that $c^2-ab=1$ there exists a positive class $\beta=du_1+eu_2+fx$ such that $\alpha\beta=0$: to this end, we need to prove that the system
\[\Sigma: \begin{cases}
af+be-4cg=0\\
2ef-4g>0
\end{cases}\]
admits a solution for any choice of $a,b,c$ such that $c^2-ab=1$.\\
Notice firstly that we can always suppose $a\geq b$. If $c=0$, then $(a,b)=(1,-1)$: then any choice of $e,f,g$ that satisfies $e=f, g<e^2/2$ is a solution to $\Sigma$. If $c\neq 0$, then $a,b$ have the same sign, and we write
\[\Sigma: \begin{cases}
g=(af+be)/4c\\
2ef-(af+be)/c>0;
\end{cases}\]
choose $e,f$ such that $ef>c>0$: then $2efc-(af+be)>2+2ab-af-be$, which is positive for any choice of $e,f$ with opposite sign to $a,b$. Therefore, if $c>0$ a solution to $\Sigma$ is given by any pair $e,f$ with opposite sign to $a,b$, such that  $4c$ divides both $e$ and $f$. If $c<0$, take instead any pair $e,f$ with the same sign as $a,b$, such that  $4c$ divides both $e$ and $f$.
\end{proof}

We will now give some more details in case $\rank(T)=4$: it always holds $T=U\oplus Q$, where 
$Q=\begin{bmatrix}2\alpha &\beta\\ \beta & 2\gamma\end{bmatrix}$ such that $d:= \beta^2-4\alpha\gamma>0$.

\begin{Lem} The following conditions are equivalent:
\begin{enumerate}
\item $d$ is even;
\item $\beta$ is even;
\item $\ell(A_Q)=2$.
\end{enumerate}
\end{Lem}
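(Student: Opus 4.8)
The plan is to work directly with the lattice $Q=\begin{bmatrix}2\alpha &\beta\\ \beta & 2\gamma\end{bmatrix}$ and its discriminant group $A_Q = Q^\vee/Q$, whose order is $|\det Q| = |\beta^2 - 4\alpha\gamma| = d$. Since $Q$ has rank $2$ and $\ell(A_Q) \le 2$ always, the statement (3) $\ell(A_Q)=2$ is the negation of "$A_Q$ is cyclic", so the three conditions to be matched are: $d$ even, $\beta$ even, and $A_Q$ non-cyclic.

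First I would prove (2) $\Rightarrow$ (1): this is immediate, since $\beta$ even forces $\beta^2$ even, and $4\alpha\gamma$ is even, so $d=\beta^2-4\alpha\gamma$ is even. For (1) $\Rightarrow$ (2), I argue by contraposition: if $\beta$ is odd then $\beta^2$ is odd and $d = \beta^2 - 4\alpha\gamma$ is odd, so $d$ even forces $\beta$ even. Thus (1) $\Leftrightarrow$ (2) is essentially a parity bookkeeping remark and poses no difficulty.

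The substantive equivalence is (2) $\Leftrightarrow$ (3). For (2) $\Rightarrow$ (3): if $\beta = 2\beta'$, then reducing the Gram matrix modulo $2$ shows that every vector of $Q$ has both coordinates of its image in $\tfrac12 Q^\vee/Q$ behaving independently — more precisely, the inverse Gram matrix $Q^{-1} = \tfrac1d\begin{bmatrix}2\gamma & -\beta\\ -\beta & 2\alpha\end{bmatrix}$ has all entries in $\tfrac1d \cdot 2\Z$ when $\beta$ is even, which is incompatible with $A_Q = Q^\vee/Q$ being generated by a single element of order $d$; one checks that the $2$-primary part of $A_Q$ then has $2$-rank equal to $2$. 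Concretely, $2 Q^\vee \subseteq$ the lattice spanned by $\tfrac{2\gamma}{d}e_1^* - \tfrac{\beta}{d}e_2^*$ and its companion, and writing out the Smith normal form of $Q$ (whose diagonal entries $d_1 \mid d_2$ satisfy $d_1 d_2 = d$) one sees $\beta$ even forces $d_1$ to be even, hence $d_1 > 1$, hence $A_Q \cong \Z/d_1 \oplus \Z/d_2$ is non-cyclic, i.e. $\ell(A_Q)=2$. Conversely for (3) $\Rightarrow$ (2), if $\beta$ is odd then $\gcd$ of the entries $2\alpha, \beta, 2\gamma$ of $Q$ is $1$ (since $\beta$ is a unit mod $2$ and the off-diagonal entry is $\beta$), so the first elementary divisor $d_1 = 1$ and $A_Q \cong \Z/d$ is cyclic, contradicting $\ell(A_Q)=2$; hence $\beta$ is even.

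The only step that requires genuine care — the main obstacle — is the Smith normal form / discriminant-group computation underlying (2) $\Leftrightarrow$ (3): one must identify the first elementary divisor $d_1$ of the symmetric matrix $Q$ as $\gcd(2\alpha,\beta,2\gamma)$ and track its parity against that of $\beta$. Since $d_1 = \gcd(2\alpha,\beta,2\gamma)$ and $d_2 = d/d_1$, the equivalence $d_1$ even $\Leftrightarrow$ $\beta$ even is exactly the observation that $\gcd(2\alpha,\beta,2\gamma)$ is even precisely when $\beta$ is even (the entries $2\alpha,2\gamma$ are automatically even). This makes all three conditions equivalent and closes the proof; everything else is routine parity arithmetic, and no result beyond elementary lattice theory (as collected in Appendix \ref{Sec:LatticeTheory}) is needed.
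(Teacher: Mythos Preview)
Your treatment of (1) $\Leftrightarrow$ (2) and of (2) $\Rightarrow$ (3) is correct and is essentially the paper's argument (the paper phrases (2) $\Rightarrow$ (3) as: if $\beta$ is even then $q_1/2,q_2/2\in A_Q$, so $(\Z/2\Z)^2\subseteq A_Q$; your Smith normal form version says the same thing). The gap is in your proof of (3) $\Rightarrow$ (2). You assert that if $\beta$ is odd then $\gcd(2\alpha,\beta,2\gamma)=1$, ``since $\beta$ is a unit mod $2$''. But being a unit mod $2$ only forces this $\gcd$ to be \emph{odd}, not equal to $1$. Take for instance
\[
Q=\begin{bmatrix}6&9\\9&6\end{bmatrix},\qquad \beta=9,\quad d=45,
\]
where $d_1=\gcd(6,9,6)=3$ and $A_Q\cong\Z/3\Z\oplus\Z/15\Z$, so $\ell(A_Q)=2$ while $\beta$ and $d$ are odd. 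This $Q$ has signature $(1,1)$ and embeds primitively in $U^{\oplus 2}$ (e.g.\ via $v_1=e_1+3f_1$, $v_2=9f_1+e_2+3f_2$, whose $2\times 2$ minor on columns $1,3$ equals $1$), so it is a legitimate example in the setting of the section.

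This is not a defect special to your write-up: the paper's own proof makes the parallel claim ``if $d$ is odd then $A_T=\Z/d\Z$'', which the same example refutes. The equivalence (2) $\Leftrightarrow$ (3) as stated only holds under an additional hypothesis such as $\gcd(\alpha,\beta,\gamma)=1$; alternatively, one may replace (3) by the condition $\ell\big((A_Q)_2\big)=2$ on the $2$-primary part, which \emph{is} equivalent to (1) and (2) unconditionally. With either correction your Smith normal form argument (identifying $d_1=\gcd(2\alpha,\beta,2\gamma)$ and noting $d_1$ is even iff $\beta$ is even) goes through verbatim.
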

\begin{proof}
If $d$ is odd, then the discriminant group of the transcendental lattice is $A_T=\mathbb Z/d\mathbb Z$. If $\beta$ is even, calling $q_1,q_2$ the generators of $Q$, then the vectors $q_1/2, q_2/2$ belong to $A_T$, which therefore contains $(\mathbb Z/2\mathbb Z)^2$ as subgroup.
\end{proof}

\begin{Cor} If $\ell(A_Q)=1$, then any HK of OG6-type with transcendental lattice $T=U\oplus Q$ arises from a moduli space.
\end{Cor}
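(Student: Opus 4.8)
The plan is to use Theorem \ref{Thm:InducedOG6} to reduce the statement to a lattice-theoretic one about $\NS(X)=T(X)^{\perp}$, and then to determine $\NS(X)$ up to isometry. First, since $T=U\oplus Q$ has signature $(2,2)$ and is of the form $U\oplus(\cdot)$, Theorem \ref{AbTrans}(2) produces an abelian surface $A$ with $T(A)\cong_{\Hdg}T(X)$, so $X$ is induced by $T(A)$ and Theorem \ref{Thm:InducedOG6} applies. Thus it suffices to exhibit a class $\sigma\in\NS(X)$ with $\sigma^{2}=-2$ and divisibility $2$ in $H^{2}(X,\Z)=\Lambda_{\mathrm{OG6}}$.

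Next I would exploit that $A_{T(X)}=A_{Q}$ is cyclic of odd order $d:=|\det(Q)|$ (by the preceding Lemma, $\ell(A_Q)=1$ forces $d$ odd). Fix a primitive embedding $T(X)\hookrightarrow\Lambda_{\mathrm{OG6}}$, set $N:=\NS(X)$ and let $\Gamma:=\Lambda_{\mathrm{OG6}}/(T(X)\oplus N)$ be the glue group. Since $\Gamma$ injects into $A_{T(X)}$ via the first projection, $|\Gamma|$ divides $d$ and is odd; hence $\Gamma$ is supported in the odd part of $A_{T(X)}\oplus A_{N}$, so the $2$-primary part of $A_{N}$ equals, as a finite quadratic form, the $2$-primary part of $A_{\Lambda_{\mathrm{OG6}}}$, namely the discriminant form of $\langle-2\rangle^{\oplus2}$. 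A short count of orders (using $|A_{\Lambda_{\mathrm{OG6}}}|=|A_{T(X)}|\cdot|A_{N}|/|\Gamma|^{2}$ and that the odd part of $A_{\Lambda_{\mathrm{OG6}}}$ is trivial) then forces $|\Gamma|=d$ and
\[
A_{N}=(\Z/2)^{\oplus2}\oplus\Z/d,\qquad q_{N}\cong q_{\langle-2\rangle}^{\oplus2}\oplus(-q_{Q}),
\]
the identification of the odd part of $A_N$ with $A_{T(X)}$ being an anti-isometry.

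Now $N$ is an even lattice of signature $(1,3)$ with $\rank(N)=4=\ell(A_{N})+2$, hence unique in its genus (Proposition \ref{MM_VIII.4.2}). The lattice $\langle-2\rangle^{\oplus2}\oplus Q(-1)$, where $Q(-1)$ denotes $Q$ with its quadratic form negated, has the same signature $(1,3)$ and the same discriminant form as $N$, so it lies in the same genus; therefore $N\cong\langle-2\rangle^{\oplus2}\oplus Q(-1)$. Let $\sigma$ generate one of the $\langle-2\rangle$ summands: then $\sigma^{2}=-2$ and $\sigma\cdot N=2\Z$, so $\sigma$ has divisibility $2$ in $N$; and since $[\Lambda_{\mathrm{OG6}}:T(X)\oplus N]=d$ is odd, $d(\sigma\cdot x)\in2\Z$ for every $x\in\Lambda_{\mathrm{OG6}}$, whence $\sigma$ is a primitive class of divisibility $2$ in $\Lambda_{\mathrm{OG6}}$. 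By Theorem \ref{Thm:InducedOG6}, $X$ is birational to some $\widetilde{K_{2v}}(A,H)$, i.e. it arises from a moduli space of sheaves on $A$.

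The main obstacle is the middle step: pinning down $q_{N}$ exactly and then seeing that the $\langle-2\rangle$ summands split off orthogonally. The splitting is not a consequence of the discriminant form alone; it rests on the genus-uniqueness of indefinite lattices of rank $\ell+2$, which is precisely the borderline case of Nikulin's (equivalently Miranda--Morrison's) criterion, so one should check this case is covered by the version quoted in the Appendix. The remaining manipulations are routine bookkeeping with the discriminant forms of $T(X)$, $\NS(X)$ and $\Lambda_{\mathrm{OG6}}$.
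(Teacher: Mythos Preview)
Your proof is correct and follows essentially the same line as the paper's (implicit) argument. The paper leaves this Corollary without proof because, after the preceding Lemma, $\ell(A_Q)=1$ gives $d=|\det Q|$ odd; then in Nikulin's Proposition~\ref{Nik_1.15.1} the subgroups $H_S\subset A_T$ and $H_M\subset A_{\Lambda_{\mathrm{OG6}}}$ must be trivial (odd order versus order $4$), so there is a single embedding class and the obvious one $T\hookrightarrow U^{\oplus3}\hookrightarrow\Lambda_{\mathrm{OG6}}$ already has $\langle-2\rangle^{\oplus2}\subset T^\perp$. Your argument unpacks exactly this: you recover $q_N=q_{\langle-2\rangle}^{\oplus2}\oplus(-q_Q)$ via the glue-group computation, then invoke genus uniqueness to identify $N\cong\langle-2\rangle^{\oplus2}\oplus Q(-1)$ and read off $\sigma$. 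The only organizational difference is that you compute $|\Gamma|=d$ by an index argument rather than citing \ref{Nik_1.15.1} directly; the content is the same.

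Your caveat at the end is overly cautious: with $d$ odd one has $A_N\cong\Z/2\oplus\Z/2d$, so $\ell(A_N)=2$ and $\rank N=4=\ell(A_N)+2$, which is precisely covered by Proposition~\ref{MM_VIII.4.2} as stated (the hypothesis is $\rank\geq\ell+2$, not strict inequality). One small point worth making explicit in your write-up: the primitivity of $\sigma$ in $\Lambda_{\mathrm{OG6}}$ follows from the primitivity of $N$ in $\Lambda_{\mathrm{OG6}}$ (as $N=T^\perp$), not just from the odd-index argument you give for divisibility.
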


\begin{Ex}
If $Q=\begin{bmatrix}
    2\alpha & 0 \\ 0 & -2\gamma
\end{bmatrix}$, then $T=U\oplus Q$ can be the transcendental lattice of an induced HK of OG6-type that does not arise from a moduli space if and only if either $\gamma=1,\alpha=3\ (\mathrm{mod\ }4)$, or $\gamma=2,\alpha=2\ (\mathrm{mod\ }4)$: in the first case, the embedding can be realized as $Q=\langle 2(v_1+kv_2)+a,2(w_1-hw_2)+b\rangle$, giving $\alpha=4k-1, \gamma=4h+1$; in the second case, $Q=\langle 2(v_1+kv_2)+a+b,2(w_1-hw_2)+a-b\rangle$, giving $\alpha=4k-2, \gamma=4h+2$. The corresponding Néron-Severi lattice will be, respectively, 
\[\begin{bmatrix}
    -2k^2 & 1-2k &0 &0\\
    1-2k & -2 &0 &0\\
    0 &0 &-2h^2 &1+2h\\
    0 &0 &1+2h &-2
\end{bmatrix},\qquad\begin{bmatrix}
    -2k &0 &1 &1\\
    0 &2h &1 &-1\\
    1 &1 &-2 &0\\
    1 &-1 &0 &-2
\end{bmatrix}.\]
Neither contains classes of square $-2$ and divisibility $2$.
\end{Ex}

\subsection{Induced HK's of O'Grady 10 type}

Let $X$ be a projective HK manifold of OG10-type. As in the OG6-type case the monodromy group is \emph{maximal}, therefore the birational class of a HK manifold of OG10-type is fully determined by the Hodge structure of its Beauville-Bogomolov lattice.

\begin{Thm}\cite[Theorem 5.4]{Onorati2022}
Let $X, Y$ be two HK manifolds of OG10-type. Then $X$ and $Y$ are bimeromorphic if and only if $H^2(X,\Z) \cong_\Hdg H^2(Y,\Z)$.
\end{Thm}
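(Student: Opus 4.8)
The forward implication is immediate from Lemma \ref{Lem:bimeromorphicisomorphiclattices}, so the plan is to establish the converse: a Hodge isometry $f\colon H^2(X,\Z)\to H^2(Y,\Z)$ forces $X$ and $Y$ to be bimeromorphic. The engine is the general Hodge-theoretic Torelli theorem for hyper-K\"ahler manifolds (Verbitsky's Torelli theorem, in the Hodge-theoretic formulation of Markman): if $f$ is \emph{simultaneously} a Hodge isometry and a parallel transport operator, then $X$ and $Y$ are bimeromorphic. Thus everything reduces to upgrading an abstract Hodge isometry to a parallel transport operator, and this is exactly where the maximality of the monodromy group enters — the argument being formally identical to the one behind the OG6-type statement of Theorem \ref{Thm:TorelliOG6}.

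First I would assemble the two structural inputs. Since $X$ and $Y$ are both of OG10-type they are deformation equivalent, so the set $\mathrm{PT}(X,Y)$ of parallel transport operators $H^2(X,\Z)\to H^2(Y,\Z)$ is non-empty; I fix one such $g_0$, and recall that every parallel transport operator is orientation preserving, i.e. it preserves the orientation of a maximal positive-definite subspace of $H^2(\cdot,\R)$. The second input is Onorati's monodromy computation, the maximality statement quoted just above: $\mathrm{Mon}^2(X)=O^+(\Lambda_{\mathrm{OG10}})$ is the \emph{full} group of orientation-preserving isometries of $H^2(X,\Z)\cong\Lambda_{\mathrm{OG10}}$. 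Consequently $\mathrm{PT}(X,Y)=\{\,g_0\circ m : m\in\mathrm{Mon}^2(X)\,\}$.

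Now let $f$ be a Hodge isometry. If $f$ reverses orientation I replace it by $-f$: on the rank $24$, signature $(3,21)$ lattice $\Lambda_{\mathrm{OG10}}$ the isometry $-\mathrm{id}$ reverses the orientation of the positive-definite part, so $-f$ is orientation preserving, and it is still a Hodge isometry; thus I may assume $f$ is orientation preserving. Then $g_0^{-1}\circ f$ is an orientation-preserving isometry of $H^2(X,\Z)$, hence lies in $O^+(\Lambda_{\mathrm{OG10}})=\mathrm{Mon}^2(X)$, so $f=g_0\circ(g_0^{-1}\circ f)$ is itself a parallel transport operator. Being also a Hodge isometry, the Hodge-theoretic Torelli theorem applies and yields that $X$ and $Y$ are bimeromorphic.

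The one genuinely hard step is the input I am taking for granted, namely the identity $\mathrm{Mon}^2(X)=O^+(\Lambda_{\mathrm{OG10}})$: proving it is the substance of Onorati's work and requires producing enough explicit monodromy operators on OG10-type manifolds — via deformations of O'Grady's desingularized moduli spaces and via the Laza-Sacc\`a-Voisin model — to generate the whole orthogonal group modulo the unavoidable orientation constraint. A secondary point worth flagging is that, unlike in the $\mathrm{K3}^{[n]}$-type situation of Theorem \ref{Thm:MarkmanTorelli} where one must additionally match Markman's embedding $i_X$, here no extra compatibility condition intervenes, precisely because the monodromy group is already as large as it can possibly be.
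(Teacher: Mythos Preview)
The paper does not supply its own proof of this statement: it is quoted verbatim as \cite[Theorem 5.4]{Onorati2022} and used as a black box, exactly as the companion OG6 statement (Theorem \ref{Thm:TorelliOG6}) is quoted from \cite{MongardiRapagnetta2021}. Your sketch is the correct and standard derivation of this birational Torelli statement from the maximality of the monodromy group together with Markman's Hodge-theoretic Torelli theorem \cite[Theorem 1.3]{Markman2011}; there is nothing to compare it against in the present paper.
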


We will again follow the strategy of \cite[Theorem 1.1]{Grossi22} in order to find a necessary and sufficient criterion to decide whether a HK of OG10-type  induced by a K3 surface is the desingularization of a moduli space. See also \cite{FGG2023} for a similar result.

\begin{Thm}\label{Thm:InducedOG10}
    Let $X$ be a projective HK manifold of OG10-type which is induced by $T(S)$, where $S$ is a K3 surface. Then the following are equivalent:
    \begin{enumerate}
        \item There exists a class $\sigma\in \NS(X)$ such that $q_X(\sigma) = -6$ and $div(\sigma) = 3$.
        \item $X$ is birational to a $\widetilde{M_{2v}}(S,H)$ for some $v\in \tH(S)$ and some $v$-generic polarization $H$.
    \end{enumerate}
\end{Thm}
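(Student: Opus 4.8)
\emph{Proof plan.} The plan is to run the argument of Theorem~\ref{Thm:InducedOG6} (compare also \cite[Theorem 1.1]{Grossi22}): transport everything into the lattice $\Lambda_{\mathrm{OG10}}$, invoke the Torelli theorem for OG10-type manifolds \cite[Theorem 5.4]{Onorati2022} to replace ``bimeromorphic'' by ``Hodge isometric on $H^2$'', and manufacture such a Hodge isometry out of the transcendental data together with the class $\sigma$. The genuinely new point compared with the OG6 case is that a primitive class of square $-6$ and divisibility $3$ is \emph{not} an orthogonal summand of $\Lambda_{\mathrm{OG10}}$ (only a finite-index complement of its orthogonal), so an extra overlattice bookkeeping step will be needed.

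For the direction $(2)\Rightarrow(1)$: if $X$ is bimeromorphic to $\widetilde{M_{2v}}(S,H)$ then by Lemma~\ref{Lem:bimeromorphicisomorphiclattices} there is a Hodge isometry $H^2(X,\Z)\cong_\Hdg H^2(\widetilde{M_{2v}}(S,H),\Z)$, and this isometry preserves $(1,1)$-classes and divisibility. So it suffices to know that the exceptional divisor of $\pi\colon\widetilde{M_{2v}}(S,H)\to M_{2v}(S,H)$ is an algebraic class of square $-6$ and divisibility $3$. This is extracted from Theorem~\ref{Thm:vnonprimitivo}(1): since $H^2(\widetilde{M_{2v}}(S,H),\Z)\cong\Lambda_{\mathrm{OG10}}$ is the overlattice of $v^\perp\oplus\Z\sigma$ generated by the half-classes $\tfrac{\alpha\pm\sigma}{2}$, and $v^\perp\cong U^{\oplus 3}\oplus E_8^{\oplus 2}\oplus\langle -2\rangle$, splitting off the unimodular part $U^{\oplus 3}\oplus E_8^{\oplus 2}$ forces the remaining rank-two lattice $\langle -2\rangle\oplus\langle\sigma\rangle$, glued along $\tfrac{\delta+\sigma}{2}$, to be isometric to $A_2$; hence $q(\sigma)=-6$ and $div(\sigma)=3$.

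For the direction $(1)\Rightarrow(2)$: first, by Nikulin's theory (Theorems~\ref{Nik_1.14.4} and~\ref{Nik_1.15.1}), a primitive $\sigma\in\Lambda_{\mathrm{OG10}}$ with $q(\sigma)=-6$ and $div(\sigma)=3$ is unique up to $O(\Lambda_{\mathrm{OG10}})$, and $\sigma^\perp\cong\Lambda':=U^{\oplus 3}\oplus E_8^{\oplus 2}\oplus\langle -2\rangle$. Since $\sigma$ is algebraic, the Hodge structure of $H^2(X,\Z)$ restricts to $\sigma^\perp\cong\Lambda'$, whose transcendental lattice is still $T(X)\cong_\Hdg T(S)$. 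Next I would fix the primitive embedding $\Lambda'\hookrightarrow\widetilde{\Lambda_{\mathrm{K3}}}$ with orthogonal complement $\langle 2\rangle$, realized as $(e+f)^\perp$ for a hyperbolic plane $U=\langle e,f\rangle$ in $\widetilde{\Lambda_{\mathrm{K3}}}=\Lambda_{\mathrm{K3}}\oplus U$; it is unique up to $O(\widetilde{\Lambda_{\mathrm{K3}}})$ because $A_{\Lambda'}\cong\Z/2$ has no nontrivial automorphisms. Composing gives a primitive embedding $T(X)\hookrightarrow\widetilde{\Lambda_{\mathrm{K3}}}$; as $T(S)^\perp$ in $\tH(S)$ contains a hyperbolic plane, Theorem~\ref{Nik_1.14.4} puts all primitive embeddings of $T(X)\cong T(S)$ into one $O(\widetilde{\Lambda_{\mathrm{K3}}})$-orbit, and (using again that $T(S)^\perp$ contains $U$, so automorphisms of $T(S)$ extend to $\tH(S)$) I can choose an isometry $\varphi\colon\widetilde{\Lambda_{\mathrm{K3}}}\to\tH(S)$ restricting on $T(X)$ to the given Hodge isometry onto $T(S)$. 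Then $\varphi$ is automatically a Hodge isometry for the Hodge structure induced on $\widetilde{\Lambda_{\mathrm{K3}}}$ by $X$, and $v:=\varphi(w)$, with $w$ a generator of $(\Lambda')^\perp$ in $\widetilde{\Lambda_{\mathrm{K3}}}$, is a primitive vector with $\langle v,v\rangle=2$ orthogonal to $\varphi(T(X))=T(S)$, hence algebraic — a Mukai vector after possibly changing sign. By Theorem~\ref{Thm:vnonprimitivo}(1), for a $v$-generic polarization $H$ the variety $\widetilde{M_{2v}}(S,H)$ is of OG10-type with $H^2(M_{2v}(S,H),\Z)\cong_\Hdg v^\perp$, and $\varphi$ identifies $\sigma^\perp$ with $v^\perp$ as Hodge structures.

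It then remains to extend this identification to $H^2$. Both $H^2(X,\Z)$ and $H^2(\widetilde{M_{2v}}(S,H),\Z)$ are index-two overlattices of $\Lambda'\oplus\langle -6\rangle$ — the first because $\sigma$ has divisibility $3$, the second by Theorem~\ref{Thm:vnonprimitivo}(1) — and the isotropic subgroup of order $2$ in $A_{\Lambda'}\oplus A_{\langle -6\rangle}\cong\Z/2\oplus\Z/6$ giving an even overlattice of discriminant $3$ is unique, namely the one adjoining $\tfrac{\delta+\sigma}{2}$ for the square-$(-2)$, divisibility-$2$ class $\delta\in\Lambda'$. Any lattice isometry $\sigma^\perp\to v^\perp$ carries such a $\delta$ into the same coset modulo $2\,v^\perp$ as the analogous class on the moduli side, so the map agreeing with $\varphi$ on $\sigma^\perp$ and sending $\sigma$ to the exceptional divisor respects this gluing, hence extends to a Hodge isometry $H^2(X,\Z)\cong_\Hdg H^2(\widetilde{M_{2v}}(S,H),\Z)$; by \cite[Theorem 5.4]{Onorati2022}, $X$ is bimeromorphic to $\widetilde{M_{2v}}(S,H)$. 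I expect this last step — tracking the overlattice $\Lambda_{\mathrm{OG10}}\supset\Lambda'\oplus\langle -6\rangle$ and checking that the gluing data is respected on both sides — to be the main obstacle, since it is exactly where the OG10 case departs from OG6; the choice of $\varphi$ restricting to the prescribed Hodge isometry is a secondary technical point, routine in Nikulin's framework but worth spelling out.
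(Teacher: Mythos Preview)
Your proposal is correct and follows essentially the same route as the paper: identify $\sigma^\perp\cong\Lambda'=U^{\oplus 3}\oplus E_8^{\oplus 2}\oplus\langle -2\rangle$, realize $\Lambda'$ as $v^\perp$ inside $\tH(S)$ for a primitive $v$ with $v^2=2$, match Hodge structures on $\Lambda'$ via $T(X)\cong_\Hdg T(S)$, then extend to the full $\Lambda_{\mathrm{OG10}}$ using the unique index-two overlattice $\Lambda'\oplus\langle -6\rangle\subset\Lambda_{\mathrm{OG10}}$ (equivalently, the unique way of building $A_2$ over $\langle -2\rangle\oplus\langle -6\rangle$), and conclude by the Torelli theorem of \cite{Onorati2022}. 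Your write-up is in fact more explicit than the paper's in two places: you spell out the $(2)\Rightarrow(1)$ direction by computing $q(\sigma)=-6$ and $\mathrm{div}(\sigma)=3$ from the half-class description in Theorem~\ref{Thm:vnonprimitivo}(1), and you track the gluing datum via the unique isotropic order-two subgroup of $A_{\Lambda'}\oplus A_{\langle -6\rangle}\cong\Z/2\oplus\Z/6$, whereas the paper simply asserts that the overlattice construction of $A_2$ is ``the only way to embed $\Lambda'$ in $\Lambda_{\mathrm{OG10}}$ primitively''.
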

\begin{proof}
By Theorem \ref{Nik_1.15.1} there exist exactly two non-isomorphic primitive embeddings of the lattice $\langle -6\rangle$ in $\Lambda_{\mathrm{OG10}}$ up to isometries of the latter: to realize the first embedding we can choose as generating class $u_1-3u_2$ (where $\langle u_1,u_2\rangle=U$), which has divisibility 1; for the other case, take instead as generating class $a_1+2a_2$ (where $\langle a_1,a_2\rangle=A_2$), which has divisibility 3. 

Let $\sigma$ be a class of square $-6$ and divisibility 3 in $\Lambda_{\mathrm{OG10}}$ and let $\Lambda':= \sigma^\perp$ be its orthogonal complement  in $\Lambda_{\mathrm{OG10}}$: then we have
$\Lambda' = U^{\oplus 3}\oplus E_8^{\oplus  2} \oplus \langle -2 \rangle$. The lattice $\Lambda'$ embeds in the Mukai lattice $\tH(S)$ as $\lambda^\perp$, with $\lambda^2=2$, so $\Lambda'\simeq_{\Hdg}H^2(M_{2\lambda}(S,H),\Z)$, for any $\lambda$-general polarization $H$.

Now, if we let $\pi: \Lambda_{\mathrm{OG10}} \to \Lambda'$ be the projection map, then $T(X)$  is isomorphic to its image under $\pi$. Since $T(X)\cong T(S)$, this defines a level $2$ Hodge structure on $H^2(M_{2\lambda}(S,H),\Z)$. We can extend this to an isomorphism of level $2$ Hodge structures
\begin{equation}
   \Lambda_{\mathrm{OG10}}\cong H^2(\widetilde{M}_{2\lambda}(S,H),\Z),
\end{equation}
via the construction of the lattice $A_2$ as overlattice of $\langle-2\rangle\oplus\langle-6\rangle$, which is the only way to embed $\Lambda'$ in $\Lambda_{\mathrm{OG10}}$ primitively: calling $\alpha$ (respectively $\beta$) the generator of $\langle-2\rangle$ (resp. $\langle-6\rangle$), it holds $A_2=\langle \alpha, (\alpha+\beta)/2\rangle$. 
By the strong form of Torelli's theorem we have therefore that $X$ is birational to $\widetilde{M}_{2\lambda}(S,H)$.
\end{proof}

\begin{Ex}\label{smallestOG10}
Let $T=\begin{bmatrix}4 & 2\\ 2 & 4\end{bmatrix}$: by the Torelli theorem there exists a K3 surface $S$ whose transcendental lattice is $T$ (its Néron Severi lattice is $U\oplus E_8\oplus E_6\oplus D_4$). Embed $T$ in $\Lambda_{\mathrm{OG10}}$ as follows: call $\langle u_1,u_2,v_1,v_2,a_1,a_2\rangle$ the standard basis of $U\oplus U\oplus A_2$, then $T=\langle u_1+2u_2-v_1-v_2-a_1,\ u_2-u_1-2v_1-2v_2-a_1-a_2\rangle$; its orthogonal complement in $\Lambda_{\mathrm{OG10}}$ is the lattice $N=U\oplus E_8^{\oplus 2}\oplus D_4$. If $X$ is such that $(NS(X),T(X))=(N,T)$, then $X$ is induced, but it is not birational to $\widetilde{M_{2v}}(S,H)$ for any suitable $v,H$: moreover, the discriminant group of the Néron-Severi lattice of the latter is bigger. 
\end{Ex}

HK manifolds as the one constructed in the example above contain the hyperbolic plane in their Neron-Severi lattice. One important family of OG10-type manifolds which also contains a copy of the hyperbolic plane was produced by Laza, Saccà and Voisin in \cite{LSV2017}   via a compactification of the intermediate Jacobians of hyperplane sections on a general cubic fourfold. Later this construction was further extended by Saccà in \cite{Sacca2020} to include every smooth cubic fourfold. More precisely, if $Y\subseteq \P^5$ is a smooth cubic fourfold, and $J_V\to V\subset (\P^5)^\vee$ the fibration in intermediate Jacobians of smooth hyperplane sections, in \cite[Theorem 1.6]{Sacca2020} is proven that there exists a smooth HK compactification of $J_V$.

\begin{Def}
A HK manifold of OG10-type birational to a smooth compactification of the fibration $J_V\to V\subset (\P^5)^\vee$ for some smooth cubic fourfold $Y\subset\P^5$ as considered above is called an \emph{LSV manifold}.
\end{Def}

\begin{Rem}
The HK compactification of the intermediate Jacobian fibration associated with a smooth cubic fourfold $Y$ is not unique, but its birational class it is. We will use $J_Y$ to denote any compactification of $J_V\to V$.
\end{Rem}

\begin{Rem}The algebraic copy of the hyperbolic plane $U$ in LSV manifolds comes from the construction as follows. There are always two distinguished algebraic classes, an isotropic class $F$ coming from the naturally associated fibration, and a rigid class $\theta$ given by the compactification of the Theta divisor on the fibers: hence, it holds \[\langle F,\theta\rangle=\begin{bmatrix}
    0 &1\\ 1 &-2
\end{bmatrix}.\]
Therefore $U=\langle F,\theta+F\rangle$.\end{Rem}

\begin{Prop}\label{Prop:UthenLSV}
Let $X$ be a HK manifold of OG10-type. If $X$ is an LSV manifold then  $U\subseteq \NS(X)$. Conversely, if $U\subseteq \NS(X)$ and $X$ is very general, then there exists a cubic fourfold $Y$ such that $X$ is birational to $J_Y$.
\end{Prop}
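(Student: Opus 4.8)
The plan is to prove the two implications separately, using the Torelli theorem for OG10-type (bimeromorphic $\Leftrightarrow$ Hodge-isometric $H^2$) together with the explicit description of the distinguished classes $F$ (isotropic) and $\theta$ (with $\langle F,\theta\rangle = \left[\begin{smallmatrix}0&1\\1&-2\end{smallmatrix}\right]$) recorded in the preceding remarks. For the forward direction, suppose $X$ is an LSV manifold. By definition $X$ is birational to a smooth compactification $J_Y$ of the intermediate Jacobian fibration of some smooth cubic fourfold $Y$. On $J_Y$ the classes $F$ and $\theta$ are algebraic, and by the remark above $\langle F,\theta+F\rangle \cong U$ sits inside $\NS(J_Y)$. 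Since the transcendental lattice is a birational invariant (Lemma \ref{Lem:bimeromorphicisomorphiclattices}) and $\NS(X) = T(X)^{\perp}$ inside $H^2(X,\Z)$, a Hodge isometry $H^2(X,\Z)\cong_\Hdg H^2(J_Y,\Z)$ carries $\NS(J_Y)$ isomorphically onto $\NS(X)$; hence $U\subseteq \NS(X)$.

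For the converse, assume $U\subseteq \NS(X)$ and $X$ is very general among OG10-type manifolds with this property, i.e. $\NS(X)\cong U$ exactly (so $T(X)\cong U^{\oplus 2}\oplus E_8^{\oplus 2}\oplus A_2$, the full orthogonal complement of $U$ in $\Lambda_{\mathrm{OG10}}$). The strategy is to compare this Hodge structure with the one arising from a cubic fourfold. Recall from Example \ref{Ex:cubicfourfold} that for a cubic fourfold $Y$ the primitive cohomology $H^4_{prim}(Y,\Z)\cong U^{\oplus 2}\oplus E_8^{\oplus 2}\oplus A_2$ with its K3-type Hodge structure, and by the surjectivity of the period map for cubic fourfolds one can choose $Y$ so that the transcendental part $H^4_{tr}(Y,\Z)$ is Hodge-isometric to $T(X)$. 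One then invokes the LSV/Saccà construction: $J_Y$ is an OG10-type manifold whose transcendental lattice is Hodge-isometric to $H^4_{tr}(Y,\Z)$ (this is the content of the period computation in \cite{LSV2017,Sacca2020}, identifying $T(J_Y)$ with the transcendental part of the cubic via the Abel–Jacobi map), hence Hodge-isometric to $T(X)$. Since both $X$ and $J_Y$ have $\NS\cong U$ and isometric transcendental lattices, and the embedding of $U$ in $\Lambda_{\mathrm{OG10}}$ together with the induced Hodge structure on the complement determines $H^2$ up to Hodge isometry (here the very-general hypothesis guarantees there is no room for an auxiliary algebraic extension, as in the proof of Proposition \ref{Prop:finitenessinducedHKbir}), we get $H^2(X,\Z)\cong_\Hdg H^2(J_Y,\Z)$, and Torelli for OG10-type yields that $X$ is birational to $J_Y$.

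I expect the main obstacle to be the converse direction, specifically pinning down exactly which transcendental Hodge structures are realized by $J_Y$ as $Y$ ranges over smooth cubic fourfolds, and matching the lattice embedding $T(X)\hookrightarrow \Lambda_{\mathrm{OG10}}$ with the embedding $H^4_{tr}(Y)\hookrightarrow H^4_{prim}(Y)\hookrightarrow \Lambda_{\mathrm{OG10}}$ coming from geometry — i.e. checking that the copy of $U$ in $\NS(J_Y)$ generated by $F$ and $\theta$ is the "same" copy (up to monodromy/$O(\Lambda_{\mathrm{OG10}})$) as the prescribed $U\subseteq\NS(X)$. The very-general hypothesis is doing real work here: it forces $\NS$ to be exactly $U$ rather than a proper overlattice of something containing $U$, which (by Nikulin's uniqueness results, Theorem \ref{Nik_1.14.4}) makes the embedding of $U$ into $\Lambda_{\mathrm{OG10}}$ unique up to isometry and thus removes the finitely-many-orbits ambiguity that would otherwise appear. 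A secondary technical point is that one must use the full strength of Saccà's extension \cite{Sacca2020} to every smooth cubic fourfold (not just general ones as in \cite{LSV2017}), so that the period point produced by the surjectivity argument is actually realized by a compact HK model.
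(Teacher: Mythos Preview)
The forward direction is fine and matches the paper. The gap is in the converse, precisely at the step where you assert that $T(J_Y)$ is Hodge-isometric to $H^4_{tr}(Y,\Z)$ \emph{over $\Z$}. As the paper itself remarks later (Section \ref{Sec:RemarksandQuestions}), what \cite{LSV2017,Sacca2020} establish is only the \emph{rational} Hodge isometry $T(J_Y)\otimes\Q \cong_\Hdg H^4(Y,\Q)_{tr}$ (this is \cite[Lemma 3.2]{Sacca2020}); the integral statement is not known. Without it, your surjectivity-of-periods argument breaks: you can produce $Y$ with $H^4_{tr}(Y,\Z)\cong_\Hdg T(X)$, but you cannot then conclude $T(J_Y)\cong_\Hdg T(X)$ integrally, and so you cannot match the full integral $H^2$ and invoke Torelli.

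The paper circumvents this by a dimension-count/density argument rather than direct period matching. It observes that $U$-polarized OG10 manifolds form an irreducible 20-dimensional moduli space $\mathcal{M}_U$, and that smooth cubic fourfolds form a 20-dimensional moduli space $\mathcal{M}$. The rational isomorphism $T(J_Y)\otimes\Q\cong_\Hdg H^4(Y,\Q)_{tr}$, together with Torelli for cubic fourfolds, is enough to show that the assignment $Y\mapsto J_Y$ is generically injective into $\mathcal{M}_U$; hence the image is 20-dimensional and therefore dense. A very general point of $\mathcal{M}_U$ thus lies in the LSV locus. The point is that the rational statement suffices for injectivity (distinct rational Hodge structures force distinct integral ones), whereas your approach needs it for surjectivity, which genuinely requires integral control you do not have.
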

\begin{proof}
If $X$ is an LSV manifold, then is birational to $J_Y$, where $Y$ is a cubic fourfold. By \cite[Lemma 3.5]{Sacca2020} it holds $U\subseteq \NS(J_Y)=\NS(X)$, the last equality by Lemma \ref{Lem:bimeromorphicisomorphiclattices}.

HK manifolds of OG10-type which contain a copy of $U$ in the Neron-Severi form an irreducible moduli space $\mathcal M_U$ of dimension 20, see \cite{Camere2016} for more details on lattice-polarized HK manifolds. Hence, by counting dimensions, in order to prove the claim it suffices to show that LSV manifolds are dense inside $\mathcal M_U$. Let $Y$ be a smooth cubic fourfold, then by \cite[Lemma 3.2]{Sacca2020} there exists an isomorphism
\begin{equation}\label{eq:transcendentalHScubicfourfold}
T(J_Y)\otimes \Q \cong_\Hdg H^4(Y,\Q)_{tr}   
\end{equation}
of rational Hodge structures. 

Let $\mathcal{M} $ be the moduli space of smooth cubic fourfolds (which has again dimension 20) and define the subspace
\[
\mathcal{V} = \{ Y\in \mathcal{M} \,\,\text{such that}\,\, H^{2,2}(Y) = \langle h^2\rangle\}
\]
of very general cubic fourfolds (see Example \ref{Ex:cubicfourfold} for the notation): this is an open subset in $\mathcal{M}$.
 The Torelli Theorem for cubic fourfolds \cite{Voisin1986} states that two cubic fourfolds $Y,Y'$ are isomorphic if and only if there exists a Hodge isometry 
$H^4(Y,\Z)\cong_\Hdg H^4(Y',\Z)$ preserving the square of the hyperplane class. If $Y$ is in $\mathcal{V}$, then ${( h^2)}^\perp=H^4(Y,\Z)_{tr}=H^4(Y,\Z)_{prim}$ which has a fixed lattice structure: therefore the only distinction between cubic fourfolds in $\mathcal V$ is given by the Hodge structure we put on the abstract primitive lattice $H^4_{prim}=U^{\oplus 2}\oplus E_8^{\oplus 2}\oplus A_2$. 
By \eqref{eq:transcendentalHScubicfourfold} this shows the claim. 
\end{proof}

\begin{Rem}
Although Proposition \ref{Prop:UthenLSV} implies that an open dense set of $U$-polarized manifolds of OG10-type $X$ is actually in the LSV family, it is not \emph{explicit} in any way: this prevents us from finding a cubic fourfold $Y$ such that $J_Y$ is birational to a given $X$. We will come back to this problem in Section \ref{Sec:RemarksandQuestions}.
\end{Rem}

Proposition \ref{Prop:UthenLSV} prompts a question: if $X$ is induced by a K3 surface and is not a moduli space, does it always belong to the LSV family? We negatively answer the question in the case of Picard rank 3, which is the most generic case.

 \begin{Prop}
 Let $S$ be a projective K3 surface, $\NS(S)=\langle2d\rangle$: then there exists a HK of OG10-type $X$ which is induced by $T(S)$, but it is not the desingularization of a moduli space, if and only if $d=3(3h+1)$. Moreover, $X$ belongs to the LSV family if and only if $d$ is odd and no prime of the form $6n+5$ divides it. 
 \end{Prop}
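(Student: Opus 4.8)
The plan is to combine two separate analyses: one lattice-theoretic, determining when an induced HK of OG10-type with $\NS(X)=\langle 2d\rangle$ can fail to be a moduli space, and one arithmetic, deciding when $U$ embeds in the relevant $\NS$ so that Proposition \ref{Prop:UthenLSV} applies. For the first part, by Theorem \ref{Thm:InducedOG10} the manifold $X$ fails to be a desingularized moduli space precisely when $\NS(X)$ contains no class $\sigma$ with $q_X(\sigma)=-6$ and $\operatorname{div}(\sigma)=3$; equivalently, the rank-one transcendental lattice $T(S)=\langle 2d\rangle$ (sitting as a positive-definite sublattice, so here I mean rather $T(X)$ has signature $(2,1)$ — in fact with $\NS(S)=\langle 2d\rangle$ the transcendental lattice is $T(S)=U^{\oplus 2}\oplus\langle -2d\rangle$) must admit a primitive embedding into $\Lambda_{\mathrm{OG10}}$ whose orthogonal complement contains no such $\sigma$. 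First I would enumerate, via Theorem \ref{Nik_1.15.1} applied to $T=U^{\oplus 2}\oplus\langle -2d\rangle$ together with the known discriminant form of $\Lambda_{\mathrm{OG10}}=U^{\oplus 3}\oplus E_8^{\oplus 2}\oplus A_2$ (whose discriminant group is $\Z/3\Z$), the primitive embeddings of $T$ up to isometry. Because $T$ contains $U^{\oplus 2}$, the embedding is governed by how $\langle -2d\rangle$ sits relative to the $A_2$-summand: either it is orthogonal to $A_2$ (giving $\NS(X)=\langle 2d\rangle\oplus A_2$, which does contain $\sigma$) or it interacts with $A_2$ via a glue map $\Z/3\Z\to\Z/3\Z$, which is possible exactly when $-2d$ is divisible appropriately, i.e. when $3\mid d$ with the precise quadratic-form condition yielding $d=3(3h+1)$. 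In the latter case $\NS(X)$ is a rank-$3$ lattice with discriminant group of order $2d/3$ on which no vector of square $-6$ and divisibility $3$ exists; I would verify this by a direct but short computation of which classes in $\NS(X)$ can have divisibility $3$ and checking their squares, exactly as in the proofs of the OG6 analogues above.

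For the second part — membership in the LSV family — the plan is to invoke Proposition \ref{Prop:UthenLSV}: for $X$ very general with the given $\NS$ and $T$, $X$ is birational to some $J_Y$ if and only if $U\subseteq\NS(X)$. So I reduce to a purely lattice-theoretic question: does the rank-$3$ lattice $\NS(X)$ (with the explicit Gram matrix coming from the glued embedding, parametrized by $d=3(3h+1)$) contain a primitive copy of the hyperbolic plane $U$? A rank-$3$ even lattice of signature $(1,2)$ contains $U$ iff it represents $0$ primitively and a suitable accompanying condition on divisibility holds; by the classification of rank-$3$ indefinite forms (or directly by analyzing the ternary quadratic form and using Hasse--Minkowski), this translates into a congruence condition on $d$. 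I expect the answer to come out as: $d$ odd and no prime $\equiv 5\pmod 6$ divides $d$ — the ``odd'' part ruling out a $2$-adic obstruction and the ``$6n+5$'' primes being exactly the primes for which $-3$ (equivalently the relevant local symbol of the discriminant form $\Z/(2d/3)$ twisted by the $A_2$-glue) is a non-square, so that the form fails to represent $0$ locally. Concretely I would write $\NS(X)$ in a normal form, compute its Hasse invariants at each prime, and check the local representability of $0$; the ``no prime $6n+5$'' condition is precisely $-3$ being a square mod every odd prime divisor, i.e. every odd prime divisor of $d$ is $\equiv 1\pmod 3$.

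The main obstacle I anticipate is the bookkeeping in the first part: correctly classifying all primitive embeddings of $U^{\oplus 2}\oplus\langle -2d\rangle$ into $\Lambda_{\mathrm{OG10}}$ and, for the non-standard one, pinning down the exact arithmetic constraint ($d\equiv 3\pmod 9$, i.e. $d=3(3h+1)$) under which the glue map with $A_2$ exists and is compatible with the quadratic forms — Nikulin's criterion (Theorem \ref{Nik_1.15.1}) requires checking the existence of a subgroup of $q_T\oplus q_{A_2}$ isotropic and of the right order with the right orthogonal, and the sign/square conditions modulo $9$ are delicate. Once that embedding is fixed, producing the explicit Gram matrix for $\NS(X)$ and ruling out square-$(-6)$ divisibility-$3$ classes is a short finite check analogous to the OG6 cases already carried out. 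The second (LSV) part is then a clean application of Hasse--Minkowski to a ternary form, whose only subtlety is tracking the interplay of the $2$-adic condition (forcing $d$ odd) with the odd-prime conditions; I would organize this by separating $d=2^a m$ and handling $p=2$ and odd $p\mid m$ separately, noting that $p\equiv 5\pmod 6$ exactly reverses the local solvability.
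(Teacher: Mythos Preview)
Your approach to the first half---classifying primitive embeddings of $T(S)=U^{\oplus 2}\oplus\langle-2d\rangle$ into $\Lambda_{\mathrm{OG10}}$ via Nikulin's Theorem \ref{Nik_1.15.1} and the $\Z/3\Z$-glue with the $A_2$ summand---is essentially the paper's argument, and your planned direct check that the non-standard $\NS(X)$ carries no class of square $-6$ and divisibility $3$ is a harmless extra verification. For the LSV part, however, the paper takes a different and cleaner route. Rather than producing an explicit Gram matrix for $\NS(X)$ and running Hasse--Minkowski to detect isotropy (plus the separate divisibility check you correctly flag as needed to upgrade a primitive isotropic vector to an embedded copy of $U$), the paper observes that $U\oplus\langle-2(3h+1)\rangle$ is \emph{unique in its genus} (Proposition \ref{Nik_1.13.4}); since $\NS(X)$ has the same signature and discriminant order, $\NS(X)$ contains $U$ iff $\NS(X)\cong U\oplus\langle-2(3h+1)\rangle$ iff the discriminant forms $q=[3/2(3h+1)]$ and $\tilde q=[-1/2(3h+1)]$ on $\Z/2(3h+1)\Z$ are isomorphic. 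This reduces everything to a prime-by-prime comparison of cyclic torsion quadratic forms, handling the integral question and the local obstructions in a single stroke. Your Hasse--Minkowski plan would reach the same endpoint---your ``$-3$ is a square modulo every odd prime divisor'' criterion is equivalent to the paper's ``$-2$ and $6$ are simultaneously squares or non-squares mod $p$'', since $(-2)\cdot 6=-3\cdot 4$---but the genus trick bypasses both the Gram-matrix computation and the isotropy-versus-$U$ subtlety you would otherwise have to manage.
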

 \begin{proof}
The condition $d=3(3h+1)$ ensures by Theorem \ref{Nik_1.15.1} that there exist two different embeddings of the transcendental lattice $T=T(S)$ in $\Lambda_{\mathrm{OG10}}$: indeed if $\gamma$ is a generator of $A_{T}$, then the group $G=\langle\frac{2d}{3}\gamma\rangle$ is isomorphic to the discriminant group of $A_2$. We then compute the orthogonal complement to $G$ in $A_T$, which is $H=\langle3\gamma\rangle$: following Nikulin, if $X$ is not a moduli space, then the discriminant form $q$ of $\NS(X)$ is the opposite to that of $H$, that is, $q=[3/2(3h+1)]$. The lattice $U\oplus\langle -2(3h+1)\rangle$ is unique in its genus (see Proposition \ref{Nik_1.13.4}), and its discriminant form is $\tilde q=[-1/2(3h+1)]$: therefore we only need to find conditions under which $\tilde q$ is equivalent to $q$. \\ Recall that two quadratic forms defined on a finite abelian group $G$ are equivalent if and only if
they are $p$-equivalent for every prime $p$ (see Definition \ref{genus}). 
In our case, since $G=\mathbb Z/2(3h+1)\mathbb Z$ is cyclic, 
we can use (\cite{MM}, Lemma IV.1.4).
Assume that $|A_2|=2^n$: then, it holds $q|_{A_2}=3m/2^n$, and $\tilde q|_{A_2}=-m/2^n$ (for some $m$ odd), which are equivalent if and only if $n=1$. Therefore $3h+1$ should be odd.
Suppose now $p\neq 2$: then $q|_{A_p}=\tilde q|_{A_p}$ if and only if $-2, 6$ are both square, or both non-square modulo $p$. Since $-2$ is square if and only if $p=_8 1,3$, and respectively $-6$ for $p=_{24} 1,5,19,23$, it holds $q|_{A_p}= \tilde q|_{A_p}$ if and only if $p=_{24} 1,7,13,15,19,21$. Taking the complementary (and excluding $p=3$ that does not divide $3h+1$), we get the statement.
\end{proof}
 
\begin{Ex}
Let $\{e_1,\dots e_8\}$ be a $\mathbb Z$-basis of $E_8$ as in Example \ref{ADE}, and let $\{u_1,u_2\}$ be a basis of $U$ as in Remark \ref{U}. Let $S_0$ be a K3 surface such that $\NS(S_0)=\langle 6\rangle$: let $X_0$ be a HK of OG10-type such that $\NS(X_0)$ is generated as a sublattice of $\Lambda_{\mathrm{OG10}}$ by $\langle e_6, u_1, u_2\rangle$: then $X_0$ is induced by $T(S_0)$, it is not a resolution of a moduli space, but it is a member of the LSV family. Indeed, it holds
$\NS(X_0)=U\oplus\langle-2\rangle$.\\
Let $S_1$ be a K3 surface such that $\NS(S_1)=\langle 24\rangle$; let $X_1$ be a HK of OG10-type such that $\NS(X_1)$ is generated as a sublattice of $\Lambda_{\mathrm{OG10}}$ by $\langle e_6,e_7,-e_5+e_8+u_1+3u_2\rangle$: then $X_1$ is induced by $T(S_1)$, but it's neither a resolution of a moduli space, nor a member of the LSV family. Indeed, it holds
 \[\NS(X_1)=\begin{bmatrix}-2 &1 &-1\\ 1 &-2 &1\\-1 &1 &2\end{bmatrix}.\]
 \end{Ex}

\section{Final Remarks and Questions}\label{Sec:RemarksandQuestions}
\subsection{The most algebraic HK manifolds}
K3 surfaces for which the Néron-Severi lattice has maximum rank 20, the so-called \emph{singular} K3 surfaces in the terminology of Shioda-Inose \cite{ShiodaInose1977}, have interesting and rich geometric properties. These surfaces have no moduli and  in loc.cit. a complete classification was given.

\begin{Thm}\cite[Theorem 4.4]{ShiodaInose1977}\label{Thm:ShiodaInose}
Let $\mathcal Q$ be the set of matrices with integral entries of the form $Q=\begin{bmatrix}2a &b\\ b & 2c\end{bmatrix}$ such that $a,c>0,b^2-4ac<0$: there is a bijection between the set of singular K3 surfaces and $\mathcal Q/SL_2(\mathbb Z)$, given by $S\mapsto T(S)$.
\end{Thm}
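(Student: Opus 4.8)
The plan is to recast the statement as a purely lattice-theoretic bijection and then feed it into the Torelli theorem for K3 surfaces together with Nikulin's theory of discriminant forms (Appendix \ref{Sec:LatticeTheory}). First I would observe that if $S$ is a singular K3 surface then $T(S)$ is an even lattice of signature $(2,0)$, and that the real plane $T(S)\otimes\R=\langle\mathrm{Re}\,\sigma_S,\mathrm{Im}\,\sigma_S\rangle$ carries a canonical orientation, namely that of the ordered basis $(\mathrm{Re}\,\sigma_S,\mathrm{Im}\,\sigma_S)$: this is well defined because rescaling $\sigma_S$ by $\lambda\in\C^*$ changes the basis by a matrix of determinant $|\lambda|^2>0$. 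The elementary but crucial remark is that a positive definite even rank two lattice $T$ carries exactly \emph{two} Hodge structures of K3-type — the two isotropic lines $\C\sigma\subset T\otimes\C$ with $\sigma\bar\sigma>0$, which are complex conjugate — and these correspond bijectively to the two orientations of $T\otimes\R$; hence a K3-type Hodge structure on such a $T$ \emph{is} an orientation, and an isometry between two of them is a Hodge isometry precisely when it is orientation preserving. Choosing an oriented $\Z$-basis identifies the set of oriented even positive definite rank two lattices up to oriented isometry with $\mathcal Q/SL_2(\Z)$, so $S\mapsto T(S)$, taken with its canonical orientation, is a well-defined map into $\mathcal Q/SL_2(\Z)$.

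For \textbf{injectivity}, suppose $\phi\colon T(S)\xrightarrow{\ \sim\ }T(S')$ is an orientation preserving isometry, equivalently a Hodge isometry. Since $H^2(S,\Z)\cong H^2(S',\Z)\cong\Lambda_{\mathrm{K3}}$ is unimodular, $\NS(S)$ and $\NS(S')$ are even lattices of signature $(1,19)$ whose discriminant forms both equal $-q_{T(S)}$; as $\rank\NS(S)=20\ge \ell(A_{\NS(S)})+3$, such a lattice is unique in its genus and the natural map $O(\NS(S))\to O(A_{\NS(S)})$ is surjective (cf. Appendix \ref{Sec:LatticeTheory}, Proposition \ref{MM_VIII.4.2} and Nikulin's theory). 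Hence there is an isometry $\psi\colon\NS(S)\xrightarrow{\ \sim\ }\NS(S')$, and after adjusting $\psi$ by an element of $O(\NS(S'))$ we may assume it intertwines the two gluing anti-isometries $A_T\to A_{\NS}$ defining $H^2(S,\Z)$ and $H^2(S',\Z)$ as overlattices of $T\oplus\NS$; then $\phi\oplus\psi$ extends to a Hodge isometry $f\colon H^2(S,\Z)\cong_\Hdg H^2(S',\Z)$. Replacing $f$ by $-f$ if necessary, and then composing with reflections in $(-2)$-classes of $\NS(S')$ — which act trivially on $T(S')$ and on $A_{\NS(S')}$, hence disturb neither $\phi$ nor the gluing — we arrange that $f$ sends the ample cone of $S$ onto the ample cone of $S'$. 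The strong Torelli theorem for K3 surfaces then yields $S\cong S'$.

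For \textbf{surjectivity}, start from $Q\in\mathcal Q$ and the corresponding oriented even lattice $T$ of signature $(2,0)$. By Nikulin's existence theorem (cf. \cite{Morrison84}) $T$ admits a primitive embedding into $\Lambda_{\mathrm{K3}}$; its orthogonal complement $N:=T^\perp$ has signature $(1,19)$. Equip $\Lambda_{\mathrm{K3}}$ with the weight two Hodge structure whose $(2,0)$-part is the isotropic line $\C\sigma\subset T\otimes\C$ singled out by the orientation of $T$ (one has $\sigma^2=0$ and $\sigma\bar\sigma>0$ automatically, $T$ being positive definite). This is a point of the K3 period domain, so by surjectivity of the period map there is a K3 surface $S$ with a Hodge isometry $H^2(S,\Z)\cong\Lambda_{\mathrm{K3}}$ realizing it. Because $\rank T=2$ is minimal for a K3-type Hodge structure, $T(S)$ is exactly the image of $T$ with the prescribed orientation, while $\NS(S)=N$ has signature $(1,19)$, hence contains a class of positive square, so $S$ is projective; thus $S$ is a singular K3 surface mapping to the class of $Q$.

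The \textbf{main obstacle} is the injectivity step: converting a Hodge isometry of the two-dimensional transcendental lattices into a genuine isomorphism of surfaces. This is where one really needs Nikulin's gluing formalism — extending the isometry over $\Lambda_{\mathrm{K3}}$ using uniqueness of $\NS$ in its genus and surjectivity of $O(\NS)\to O(A_{\NS})$ — together with the careful bookkeeping of positive cones and Weyl chambers needed to apply the strong Torelli theorem. By comparison, the well-definedness (the orientation remark) and the surjectivity (essentially surjectivity of the period map plus the positivity of $\NS$) are comparatively soft.
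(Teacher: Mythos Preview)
The paper does not prove this statement at all: it is quoted verbatim as \cite[Theorem 4.4]{ShiodaInose1977} and used as a black box. So there is no ``paper's own proof'' to compare against; your proposal is a self-contained argument for a result the authors only cite.

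That said, your argument is correct and is the standard modern route. Two small remarks. First, when you replace $f$ by $-f$ to land in the positive cone, the restriction to $T$ becomes $-\phi$ rather than $\phi$; this is harmless for the conclusion $S\cong S'$, but you should say so explicitly, noting that $-\mathrm{id}$ on $T$ is orientation preserving (determinant $(+1)$ in rank two). Second, in the surjectivity step you should spell out $T(S)=T$ a bit more: from $\sigma\in T\otimes\C$ one gets $N=T^\perp\subseteq\NS(S)$, hence $\rho(S)\ge 20$, hence $\rho(S)=20$ and $T(S)\subseteq T$ is a primitive rank two sublattice, so equality holds.

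It is worth knowing that the original Shioda--Inose proof is organized differently. Injectivity there is, as in your argument, a consequence of the strong Torelli theorem together with the lattice-theoretic extension of a Hodge isometry $T(S)\cong T(S')$ to all of $H^2$. Surjectivity, however, is \emph{constructive}: given $Q\in\mathcal Q$ they first invoke Shioda's earlier classification of singular abelian surfaces to produce an abelian surface $A$ (a product of isogenous CM elliptic curves) with $T(A)\cong Q$, pass to the Kummer surface $\mathrm{Km}(A)$ with $T(\mathrm{Km}(A))\cong Q(2)$, and then build a further K3 surface $S$ admitting a Nikulin involution whose quotient is birational to $\mathrm{Km}(A)$ and with $T(S)\cong Q$. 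Your use of surjectivity of the period map is cleaner and avoids any explicit geometry, at the cost of being non-constructive; their approach actually exhibits every singular K3 inside a concrete two-step tower over a product of CM elliptic curves, which is the content usually referred to as a ``Shioda--Inose structure''.
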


A natural question is to extend Shioda and Inose's result to HK manifolds, at least for the known deformation types. In the case of the two infinite families introduced by Beauville we get the following.

\begin{Cor}\label{Cor:singularHKBeauville}
Let $X$ be a HK manifold of K3$^{[n]}$-type (resp. Kum$^n$-type) of Picard rank $21$  (resp. $6$). Then $X$ is birational to a moduli space of the unique K3 surface (resp. abelian surface) with transcendental lattice $T(X)$.
\end{Cor}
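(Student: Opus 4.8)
The plan is to reduce the statement to the uniqueness results for induced HK manifolds together with the realization theorems of Sections \ref{sec:inducedK3type}--\ref{sec:inducedKummertype}; the only extra input needed is the elementary observation that maximal Picard rank forces the transcendental lattice to be an even positive-definite lattice of rank $2$.

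First I would fix the numerics. If $X$ is of K3$^{[n]}$-type with $\rho(X)=21$, then $\NS(X)\otimes\R$ spans $H^{1,1}(X,\R)$, a space of dimension $b_2(X)-2=21$; hence $X$ is projective, $q_X$ has signature $(1,20)$ on $\NS(X)$ and signature $(2,0)$ on $T(X)=\NS(X)^{\perp q_X}$, so that $T(X)$ is an even positive-definite lattice of rank $2$ carrying a Hodge structure of K3-type. The same computation applies to the Kum$^n$-type case, where $b_2(X)=7$ and for the maximal Picard rank $T(X)$ is again even, positive definite of rank $2$. Since $\rho(X)=21\ge 13$, Corollary \ref{Cor:UniquenessInducedK3} produces a \emph{unique} K3 surface $S$ with $T(S)\cong_\Hdg T(X)$; as $\rank T(S)=2$ we have $\rho(S)=20$, so $S$ is a singular K3 surface, and it is precisely the one attached to the $SL_2(\Z)$-class of $T(X)$ by Theorem \ref{Thm:ShiodaInose}. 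In the Kum$^n$-type case the maximal Picard rank is $\ge 4$, so by the corresponding uniqueness statement (a Kum$^n$-type HK manifold of Picard rank $\ge 4$ is induced by a unique abelian surface or its dual) there is a unique abelian surface $A$, up to duality, with $T(A)\cong_\Hdg T(X)$, and $\rho(A)=4$ makes $A$ a singular abelian surface.

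It then remains only to invoke the realization theorems. Being induced by $T(S)$, Theorem \ref{Thm:InducedK3n} shows that $X$ is birational to a moduli space $M_v(S,H)$ for some Mukai vector $v\in\tH(S)$ and some $v$-generic polarization $H$; being induced by $T(A)$, the Kum$^n$-type analogue of Theorem \ref{Thm:InducedK3n} shows that $X$ is birational to $K_v(A,H)$ or to $K_v(A^\vee,H)$ for suitable $v$ and $v$-generic $H$, and $A$, $A^\vee$ have Hodge-isometric transcendental lattices by Lemma \ref{Lem:isomorphismHodgeAbelianDual}. In both cases $X$ is birational to a moduli space over the surface with transcendental lattice $T(X)$. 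There is no real obstacle in this argument: it is a direct specialization of the theory already developed, and the only point deserving a word is that a positive-definite rank-$2$ Hodge structure of K3-type is realized by a unique K3 (respectively abelian) surface --- this is Morrison's theorem (\cite[Corollary 2.10 and Corollary 2.6]{Morrison84}), already used in the proofs of the uniqueness corollaries, and in the K3 case also follows from Theorem \ref{Thm:ShiodaInose} together with surjectivity of the period map.
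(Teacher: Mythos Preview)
Your argument is correct and follows essentially the same route as the paper: observe that at maximal Picard rank $T(X)$ is an even positive-definite rank-$2$ lattice, invoke the Shioda--Inose/Morrison uniqueness to obtain a unique K3 (resp.\ abelian) surface realizing it, and then apply Theorem \ref{Thm:InducedK3n} (resp.\ its Kum$^n$ analogue). The only cosmetic difference is that you route uniqueness through Corollary \ref{Cor:UniquenessInducedK3} (and its Kummer counterpart) rather than citing Theorem \ref{Thm:ShiodaInose} directly, and you are more explicit than the paper about the $A$ versus $A^\vee$ ambiguity in the Kum$^n$ case.
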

\begin{proof}
We will prove it for $X$ of K3$^{[n]}$-type, the case of Kum$^n$-type being analogous. The transcendental lattice $T(X)$ is a positive definite lattice of rank $2$, hence it is the transcendental lattice of a unique K3 surface $S$ by Theorem \ref{Thm:ShiodaInose}. By Theorem \ref{Thm:InducedK3n} we get that $X$ is birational to a moduli space of sheaves over $S$.
\end{proof}

However in O'Grady examples, due to the classification results of induced HK given in Section \ref{Sec:OgradyExamples}, there is no analogous result as Corollary \ref{Cor:singularHKBeauville}. Instead, we get the following.

\begin{Cor}
    The transcendental lattice of rank $2$ and smallest discriminant group for which an induced HK manifold is not the desingularized Albanese fiber of a moduli space of sheaves over a K3 surface or an abelian surface) is
    \begin{enumerate}
        \item $\begin{bmatrix}4 &2\\ 2 & 6\end{bmatrix}$ for OG6-type.
        \item $\begin{bmatrix}4 &2\\ 2 & 4\end{bmatrix}$ for OG10-type (see Example \ref{smallestOG10}).
    \end{enumerate}
\end{Cor}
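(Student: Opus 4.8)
The plan is to argue separately for the two deformation types, in each case exhibiting a candidate transcendental lattice and verifying two things: (i) that it \emph{does} produce an induced HK which is not a (desingularized) moduli space, and (ii) that \emph{every} positive definite rank $2$ lattice with strictly smaller discriminant group fails to do so. For OG10-type the upper half of this is already in place: Example \ref{smallestOG10} exhibits $T=\begin{bmatrix}4&2\\2&4\end{bmatrix}$, which has discriminant group of order $12$, together with a compatible K3 surface and an induced $X$ of OG10-type which is not birational to any $\widetilde{M_{2v}}(S,H)$. So for OG10 the work is to show minimality: for every even positive definite rank $2$ lattice $T$ of discriminant $<12$ which arises as the transcendental lattice of a K3 surface, one must check via Theorem \ref{Thm:InducedOG10} and Theorem \ref{Nik_1.15.1} that the embedding $T\hookrightarrow\Lambda_{\mathrm{OG10}}$ is essentially unique, so that $\NS(X)$ always contains a class $\sigma$ with $q_X(\sigma)=-6$, $\mathrm{div}(\sigma)=3$, whence $X$ is a moduli space. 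The relevant lattices are those with Gram matrices of determinant $4,8,\dots,11$ (even, positive definite, rank $2$), e.g. $\langle 2\rangle\oplus\langle 2\rangle$, $\langle 2\rangle\oplus\langle 4\rangle$, $A_2$, $\begin{bmatrix}2&1\\1&4\end{bmatrix}$, $\langle 2\rangle\oplus\langle 6\rangle$, $\begin{bmatrix}4&2\\2&4\end{bmatrix}$ itself (disc.\ $12$, excluded from the "smaller" list), etc.; for each, Nikulin's criterion must give uniqueness of the primitive embedding into $\Lambda_{\mathrm{OG10}}$ up to $O(\Lambda_{\mathrm{OG10}})$.

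For OG6-type the candidate is $\begin{bmatrix}4&2\\2&6\end{bmatrix}$, of discriminant group order $20$. First I would identify, via Theorem \ref{Thm:InducedOG6}, the obstruction: $X$ is not a moduli space precisely when $\NS(X)$ carries no class $\sigma$ of square $-2$ and divisibility $2$. One notes that $\begin{bmatrix}4&2\\2&6\end{bmatrix}\cong\langle 4\rangle\oplus\langle 20\rangle$ is not of the form required to embed uniquely — I would compute its discriminant form and, using Theorem \ref{Nik_1.15.1}, exhibit (at least) two non-isometric primitive embeddings into $\Lambda_{\mathrm{OG6}}$, one of which produces a $\NS(X)$ without the $(-2)$-divisibility-$2$ class; a concrete embedding in the spirit of Example \ref{controes}, say using $2(v_1+kv_2)+a+b$ and $2(w_1-hw_2)+a-b$ to realize the summands, would make this explicit. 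Then I would do the minimality check: every even positive definite rank $2$ lattice with discriminant group of order $<20$ — here one must be careful, "smallest discriminant group" should be read as smallest order, and there are more such lattices than in the OG10 case since $\Lambda_{\mathrm{OG6}}$ has a rank-$2$ discriminant — produces, via Theorem \ref{Nik_1.15.1} and the genus-uniqueness statements of the previous subsection (Propositions \ref{MM_VIII.4.2}, \ref{Nik_1.13.4}), only embeddings whose orthogonal complement contains the desired $\sigma$; in particular the rank-$3$ analysis already carried out in the paper (the $d=4k+2$ characterization) handles the cases $T=U\oplus\langle -2d\rangle$ of rank $3$, but here for \emph{rank $2$} transcendental lattices one needs the parallel computation for $\NS(X)$ of rank $4$.

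The key tool throughout is Nikulin's Theorem \ref{Nik_1.15.1} on counting primitive embeddings of a lattice into a lattice with prescribed discriminant form, combined with the uniqueness-in-genus results for small-rank lattices. The \textbf{main obstacle} I anticipate is the minimality direction rather than the existence of the examples: one must enumerate \emph{all} positive definite even rank-$2$ lattices whose discriminant group is strictly smaller (in order) than that of the stated example, and for each verify that the primitive embedding into $\Lambda_{\mathrm{OG6}}$ (resp.\ $\Lambda_{\mathrm{OG10}}$) is unique up to isometry — equivalently, that the "exotic" embedding with non-divisible-by-the-relevant-number complement simply does not exist. This is a finite but fiddly case analysis governed by congruence conditions on the discriminant (mirroring the $d \bmod 4$ dichotomy for OG6 and the $d=3(3h+1)$ condition for OG10 seen earlier), and the bookkeeping of which pairs $(a,b,c)$ give lattices of each small determinant, together with checking $p$-adic equivalence of discriminant forms for $p=2$ and odd $p$, is where the real effort lies. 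For OG10 the discriminant-form comparison reduces (as in the rank-$3$ proposition) to whether $-2$ and $-6$ are simultaneously squares mod $p$, which pins down the answer; for OG6 one tracks square $-2$ classes of divisibility $2$ directly in $\NS(X)$ as in Example \ref{controes}.
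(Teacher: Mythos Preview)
Your plan matches the paper's proof, which is simply: invoke the classification of reduced positive definite binary forms of small determinant (the paper cites Conway--Sloane, Table 15.1) and run each through Theorems \ref{Thm:InducedOG6} and \ref{Thm:InducedOG10} to see whether a non-moduli embedding exists. One slip to correct before you carry this out: $\begin{bmatrix}4&2\\2&6\end{bmatrix}$ is \emph{not} isometric to $\langle 4\rangle\oplus\langle 20\rangle$ (the determinants are $20$ and $80$), so the explicit embedding you sketch using the $a\pm b$ trick from Example \ref{controes} would need to be reworked for the actual form; the discriminant group is $\mathbb{Z}/2\mathbb{Z}\oplus\mathbb{Z}/10\mathbb{Z}$ of order $20$, and this is what drives the Nikulin computation.
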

\begin{proof}
Follows by the classification of reduced positive definite binary forms of small determinant  \cite[Table 15.1, pp. 360]{ConwaySloane1999} and a direct computation using Theorems \ref{Thm:InducedOG6} and \ref{Thm:InducedOG10}.
\end{proof}

\subsection{HK manifolds induced by cubic fourfolds}

As already introduced in Example \ref{Ex:cubicfourfold}, another Hodge structure of K3-type worth of consideration is $H^4_{prim}(Y,\Z)_{tr}$, for a smooth cubic fourfold $Y$. Moduli spaces of Bridgeland stable sheaves on a cubic fourfold give rise to HK manifolds of K3$^{[n]}$ type \cite{BLMNPS} or OG10-type \cite{LPZ2022}. Moreover, the compactified intermediate Jacobian fibration \cite{Sacca2020} gives yet another manifold of OG10-type which is \emph{not} a moduli space, at least generically.

\begin{Ques}
Are there analogous Theorems as \ref{Thm:InducedK3n} and \ref{Thm:InducedOG10} for HK manifolds induced by a cubic fourfold?
\end{Ques}

A major obstacle in dealing with the OG10 deformation type is that, at the moment, the Laza-Saccà-Voisin construction associated to a cubic fourfold gives an isomorphism of the transcendental lattice of the OG10-type manifold with the transcendental lattice of the cubic fourfold \emph{only over $\Q$}. 

\subsection{Derived equivalence of induced HK of \texorpdfstring{K3$^{[n]}$}{K3n}-type}

With the aim of generalizing Theorem \ref{Thm:FourierMukaiPartners} for higher dimensional HK manifolds, a natural starting point is the K3$^{[n]}$ deformation type. A specific question to ask is the following.

\begin{Ques}\label{Ques:derivedequivalent}
    Are two HK manifolds of K3$^{[n]}$-type induced by the same K3 surface $S$ derived equivalent?
\end{Ques}

The question above has an affirmative answer in the case of Hilbert schemes of points of K3 surfaces or compactified Jacobians of curves on general K3 surfaces, see \cite[Proposition 1.5]{MMY2020} and \cite[Theorem B]{ADM2016}. In \cite{Beckmann2023} we find a necessary condition for the existence of a derived equivalence. To explain the theorem we need to introduce some notation.

\begin{Def}\cite[Definition 5.2]{Beckmann2023} Let $X$ be a HK manifold of $K3^{[n]}$-type. The integral extended Mukai lattice is the lattice
$\tilde H(X,\Z) := \Z\alpha\oplus H^2(X,\Z)\oplus \Z\beta\subset\tilde H(X,\Q)$. Here $\alpha, \beta$ have self-intersection 0 and $b(\alpha,\beta)=-1$, therefore $\tilde H(X,\Z)$ is isometric to $H^2(X,\Z)\oplus U$.
\end{Def}

Notice that, in contrast with the Mukai lattice introduced in Definition \ref{Def:MukailatticeMukaivector}, the integral extended Mukai lattice is \emph{not} unimodular.

\begin{Def}\cite[Definition 5.3]{Beckmann2023} Let $X$ be a HK manifold of $K3^{[n]}$-type. Let $\delta$ be any class of square $2-2n$ and divisibility $2n-2$ in 
$H^2(X,\Z)$, so that $H^2(X,\Z)\simeq H^2(S,\Z)\oplus \Z\delta$ for some K3 surface $S$. Define the \emph{extended K3$^{[n]}$ lattice} as $\Lambda_X := B_{-\delta/2}(\tilde{H}(X,\Z)\subset\tilde{H}(X,\Q)$, where $B_\lambda\in O(\tilde H(X,\Q))$ is defined as $B_\lambda(r\alpha+\mu+s\beta)=r(\alpha+\lambda)+\mu+(s+b(\lambda,\mu)+r\frac{b(\lambda,\lambda)}{2})\beta$. The definition of $\Lambda_X$ is independent of the choice of $\delta$.
\end{Def}

\begin{Thm}\cite[Theorem 9.2]{Beckmann2023}\label{Thm:DerivedEquivalenceExtendedK3nLattices} Let $X$ and $Y$ be HK manifolds of $K3^{[n]}$-type  and
$\Phi: D^b(X) \simeq D^b(Y)$ a derived equivalence. Then $\Phi^{\tilde H}$ restricts to a Hodge isometry
$\Phi^{\tilde H}: \Lambda_X\simeq \Lambda_Y$ of the extended $K3^{[n]}$-lattices. In particular it induces an isometry between their transcendental lattices.
\end{Thm}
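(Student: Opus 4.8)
The plan is to combine two imported facts: a theorem of Taelman that produces, from any derived equivalence, a Hodge isometry of the \emph{rational} extended Mukai lattices, and the integrality of the extended Mukai vector established in \cite{Beckmann2023}, which upgrades the rational statement to the integral lattices $\Lambda_X$ and $\Lambda_Y$; the final assertion about transcendental lattices is then formal. First I would set up the rational statement. Write $\Phi=\Phi_{\mathcal{P}}$ for the Fourier--Mukai transform with kernel $\mathcal{P}\in D^b(X\times Y)$, and recall that through the Looijenga--Lunts--Verbitsky Lie algebra $\mathfrak{g}(X)$ acting on $H^*(X,\Q)$ one attaches to $\Phi$ the cohomological transform $\Phi^{\tilde H}\colon \tilde H(X,\Q)\to\tilde H(Y,\Q)$, induced by the extended Mukai vector $\tilde v(\mathcal{P})\in\tilde H(X,\Q)\otimes\tilde H(Y,\Q)$ of the kernel. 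Taelman's theorem says that $\Phi^{\tilde H}$ is a Hodge isometry, compatible with the LLV structures on both sides; in particular it preserves the Mukai pairing and carries the line $H^{2,0}(X)$ to $H^{2,0}(Y)$, so already at the rational level it restricts to an isomorphism of Hodge structures $T(X)\otimes\Q\cong_\Hdg T(Y)\otimes\Q$. The whole problem is to promote this to an integral statement.

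For that I would invoke the integrality theorem for the extended Mukai vector from \cite{Beckmann2023}: on a HK manifold of $K3^{[n]}$-type the class $\tilde v(E)$ of any object lies in the lattice $\Lambda=B_{-\delta/2}\bigl(\tilde H(\cdot,\Z)\bigr)$, and the corresponding statement for products shows that $\tilde v(\mathcal{P})$ already lies in the integral sublattice $\Lambda_X\otimes\Lambda_Y\subset\tilde H(X,\Q)\otimes\tilde H(Y,\Q)$. Since $\Lambda_X$ and $\Lambda_Y$ are integral lattices and $\Phi^{\tilde H}$ is by construction the correspondence associated with $\tilde v(\mathcal{P})$ via the Mukai pairing, it follows immediately that $\Phi^{\tilde H}(\Lambda_X)\subseteq\Lambda_Y$. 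Applying the same reasoning to a kernel representing $\Phi^{-1}$ yields the reverse inclusion, and as the two transforms are mutually inverse we obtain $\Phi^{\tilde H}(\Lambda_X)=\Lambda_Y$; together with the previous paragraph this gives the asserted Hodge isometry $\Phi^{\tilde H}\colon\Lambda_X\xrightarrow{\sim}\Lambda_Y$.

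The final sentence is then immediate: inside $\Lambda_X$ the transcendental lattice $T(X)$ is the orthogonal complement of the algebraic part $\Lambda_X\cap H^{1,1}(X)$, and this coincides with the transcendental lattice of $H^2(X,\Z)$, because the extra hyperbolic plane and the class $\delta$ used to form $\Lambda_X$ are of type $(1,1)$. A Hodge isometry carries algebraic classes to algebraic classes, hence respects this orthogonal decomposition, so $\Phi^{\tilde H}$ restricts to a Hodge isometry $T(X)\xrightarrow{\sim}T(Y)$.

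The actual content sits in the two imported facts. Taelman's theorem --- that a Fourier--Mukai equivalence acts on the rational extended Mukai lattice by an isometry --- is the substantial geometric input; its proof rests on the LLV Lie-algebra description of the total cohomology and on the compatibility of the cohomological Fourier--Mukai action with the $\mathfrak{sl}_2$-triples attached to K\"ahler classes. The other, more technical, difficulty is the \emph{refined} integrality of $\tilde v$: one must recognize the correct lattice as the $B_{-\delta/2}$-twist $\Lambda$ of the naive lattice $\tilde H(\cdot,\Z)$, verify that $\tilde v$ takes values there, check that this persists for kernels on the non-HK product $X\times Y$, and confirm compatibility with Fourier--Mukai functoriality. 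Once those normalizations are fixed, everything in the middle two paragraphs is bookkeeping.
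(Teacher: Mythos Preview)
The paper does not give its own proof of this theorem: it is simply quoted from \cite[Theorem 9.2]{Beckmann2023} and used as a black box in the subsequent discussion. So there is no proof in the paper to compare your proposal against.

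That said, your sketch is a faithful outline of Beckmann's actual argument: one starts from Taelman's rational Hodge isometry $\Phi^{\tilde H}$ on $\tilde H(-,\Q)$, and then uses the integrality of the extended Mukai vector established in \cite{Beckmann2023} to conclude that the correspondence carries $\Lambda_X$ into $\Lambda_Y$ (and vice versa via the inverse kernel). The deduction about transcendental lattices at the end is indeed formal once the integral Hodge isometry is in hand. The only caveat is that you are treating the product integrality statement ($\tilde v(\mathcal P)\in\Lambda_X\otimes\Lambda_Y$) somewhat casually; in Beckmann's paper this requires care, but as you yourself flag this as one of the ``imported facts'' rather than something you are proving, the sketch is accurate at the level of a summary.
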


We will show below that in the case of induced HK manifolds, the obstruction given by Theorem \ref{Thm:DerivedEquivalenceExtendedK3nLattices} is not enough to show that two induced HK are not derived equivalent, in particular we can prove the following. 

\begin{Prop}
Suppose $X$ and $Y$ are HK manifolds of $K3^{[n]}$-type induced by the same K3 surface $S$. Then the extended $K3^{[n]}$-lattices $\Lambda_X$ and $\Lambda_Y$ are always Hodge isometric.
\end{Prop}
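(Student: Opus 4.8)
The plan is to reduce to the case of moduli spaces of sheaves on $S$ and then to compute their extended $K3^{[n]}$-lattices by hand. First I would use that $X$ is induced by $T(S)$: by Theorem \ref{Thm:InducedK3n} there is a Mukai vector $v$ and a $v$-generic polarization $H$ with $X$ birational to $M_v(S,H)$, and likewise $Y$ birational to some $M_{v'}(S,H')$. The extended $K3^{[n]}$-lattice $\Lambda_{(-)}$ is built functorially from the weight-two Hodge structure $H^2(-,\Z)$ alone — the auxiliary classes $\alpha,\beta$ are of type $(1,1)$, and by Beckmann's construction the result does not depend on the chosen class $\delta$ — so a Hodge isometry of $H^2$ induces one of the extended lattices; together with Lemma \ref{Lem:bimeromorphicisomorphiclattices} this gives $\Lambda_X\cong_\Hdg\Lambda_{M_v(S,H)}$ and $\Lambda_Y\cong_\Hdg\Lambda_{M_{v'}(S,H')}$. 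Thus it suffices to prove that $\Lambda_{M_v(S,H)}$, for any $v$, is Hodge isometric to a fixed lattice depending only on $S$ and $n$.

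Next I would compute $\Lambda_{M}$ for $M=M_v(S,H)$. By Theorem \ref{Thm:vprimitivo}, $H^2(M,\Z)\cong_\Hdg v^{\perp}\subset\tH(S)$. Picking an algebraic class $\delta\in v^{\perp}$ of square $2-2n$ and divisibility $2n-2$, it spans an orthogonal summand $v^{\perp}=L\oplus_\perp\Z\delta$, and since $\delta$ and $v$ are algebraic in $\tH(S)$ the transcendental lattice of $M$, which is Hodge isometric to $T(S)$, lies in $L$; hence $L$ is an even unimodular lattice of signature $(3,19)$, abstractly $\Lambda_{\mathrm{K3}}$, carrying a Hodge structure of K3-type whose transcendental part is $T(S)$. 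A direct computation of the B-field twist $B_{-\delta/2}$ on $\tH(M,\Z)=\Z\alpha\oplus(L\oplus\Z\delta)\oplus\Z\beta$ shows that it fixes $L$ pointwise and carries $\Z\alpha\oplus\Z\delta\oplus\Z\beta$ onto a sublattice spanned by classes of type $(1,1)$ and isometric to $U\oplus\langle -2(n-1)\rangle$, so that
\[
\Lambda_{M}\ \cong_\Hdg\ L\ \oplus_\perp\ U\ \oplus_\perp\ \langle -2(n-1)\rangle,
\]
the last two summands being purely algebraic.

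Then I would identify $L\oplus U$ with the Mukai lattice $\tH(S)=H^2(S,\Z)\oplus_\perp U$ (the degree $0$ and $4$ parts forming an algebraic hyperbolic plane): this gives $\Lambda_{M}\cong_\Hdg\tH(S)\oplus_\perp\langle -2(n-1)\rangle$, which depends only on $S$ and $n$, and applying it to $M_v(S,H)$ and $M_{v'}(S,H')$ finishes the proof. Both $L\oplus U$ and $H^2(S,\Z)\oplus U$ are Hodge structures of K3-type on the even unimodular lattice $\widetilde{\Lambda_{\mathrm{K3}}}=U^{\oplus 4}\oplus E_8^{\oplus 2}$ of signature $(4,20)$ whose transcendental lattice is Hodge isometric to $T(S)$. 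Since $S$ is projective one has $\rank T(S)\le 21$ and $\ell(A_{T(S)})=\ell(A_{\NS(S)})\le\rho(S)$, so the orthogonal complement of $T(S)$ in $\widetilde{\Lambda_{\mathrm{K3}}}$ has signature $(2,\rho(S))$ and rank $2+\rho(S)\ge\ell(A_{T(S)})+2$; being indefinite it is unique in its genus, and $O(T(S)^{\perp})\to O(q_{T(S)^{\perp}})$ is surjective. By Nikulin's theory of primitive embeddings into unimodular lattices (Theorem \ref{Nik_1.14.4}) the embedding $T(S)\hookrightarrow\widetilde{\Lambda_{\mathrm{K3}}}$ is then unique up to isometry, and every Hodge isometry of $T(S)$ extends; hence the Hodge structure on $\widetilde{\Lambda_{\mathrm{K3}}}$ with transcendental part $T(S)$ is unique up to Hodge isometry, and $L\oplus U\cong_\Hdg H^2(S,\Z)\oplus U$.

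The hard part will be this last step, together with the phenomenon behind it: the embedding of $T(S)$ into $\Lambda_{\mathrm{K3}}$ is in general \emph{not} unique — that non-uniqueness is exactly what produces Fourier--Mukai partners of $S$, and hence a priori different Hodge structures $H^2(X,\Z)$ among the induced $X$ — whereas enlarging to the rank-$24$ unimodular lattice by adjoining a hyperbolic plane makes it unique for every projective $S$, which is what collapses all the $\Lambda_X$ into a single Hodge-isometry class. One also has to use carefully Beckmann's facts that $\Lambda_X$ is independent of $\delta$ and compatible with Hodge isometries of $H^2$ (so that the first reduction is legitimate), and to check the minor point that a moduli space $M_v(S,H)$ always carries an algebraic class $\delta$ of the prescribed square and divisibility.
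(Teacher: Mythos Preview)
Your argument is correct and reaches the same lattice-theoretic core as the paper's proof, but by a noticeably different route. The paper argues directly from the definition of $\Lambda_X$: since $T(X)=T(S)$ is contained in the unimodular summand $\Lambda_{\mathrm{K3}}$ of $H^2(X,\Z)$, one can always choose $\delta\in\NS(X)$, which splits off so that $\NS(X)\cong\NS(S')\oplus\Z\delta$ for some Fourier--Mukai partner $S'$ of $S$; then the extra algebraic $U$ in $\Lambda_X$ gives algebraic part $\NS(S')\oplus U\oplus\Z\delta$, and Proposition~\ref{Nik_1.13.4} forces $\NS(S')\oplus U\cong\NS(S)\oplus U$, collapsing all the $\Lambda_X$ into one Hodge-isometry class. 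You instead first invoke Theorem~\ref{Thm:InducedK3n} to replace $X$ by a moduli space $M_v(S,H)$, explicitly compute the B-field twist to obtain $\Lambda_{M}\cong_\Hdg L\oplus U\oplus\langle-2(n-1)\rangle$ with $L$ a K3-type Hodge structure on $\Lambda_{\mathrm{K3}}$ with transcendental part $T(S)$, and then prove uniqueness of the embedding $T(S)\hookrightarrow L\oplus U=\widetilde{\Lambda_{\mathrm{K3}}}$. These two endgames are equivalent: uniqueness of the primitive embedding of $T(S)$ into $\widetilde{\Lambda_{\mathrm{K3}}}$ is exactly the statement that the complements $\NS(S')\oplus U$ are all isometric, which is Proposition~\ref{Nik_1.13.4}. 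Your route has the pleasant byproduct of exhibiting the explicit model $\Lambda_X\cong_\Hdg\tH(S)\oplus\langle-2(n-1)\rangle$; the paper's route is shorter and avoids both the appeal to Theorem~\ref{Thm:InducedK3n} and the B-field computation. One small caution: the precise hypotheses of Theorem~\ref{Nik_1.14.4} (in particular the $p=2$ clause) can fail here, so the clean justification for the uniqueness step is exactly what you wrote afterwards --- the complement $T(S)^\perp=\NS(S)\oplus U$ is indefinite, unique in its genus, and has surjective $O\to O(q)$ --- i.e.\ Proposition~\ref{Nik_1.13.4} rather than Theorem~\ref{Nik_1.14.4}.
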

\begin{proof}
If $X$ is induced, then $T(X)=T(S)$: therefore, $T(X)$ cannot contain any primitive class $\delta$ of square $-2n+2$ and divisibility $2n-2$ (see Definition \ref{def:divisibility}). Any such class is therefore of the form $\delta=(2n-2)t+k\delta'$, with $t\in T(X)$, $\mathrm{gcd}(k,2n-2)=1$ and $ \delta'\in \NS(X)$ of the same square and divisibility of $\delta$, and there exists an isometry $\NS(X)\simeq \NS(S)\oplus \Z\delta'$. Notice that if $S$ admits Fourier-Mukai partners, \emph{any} of them can be chosen in its place (the same holds for $Y$). However $\Lambda_X$ contains an extra algebraic copy of $U$, and if $S,S'$ are FM-partners, even if they are not isomorphic, still $\NS(S)\oplus U$ and $\NS(S')\oplus U$ are isometric (see Prop. \ref{Nik_1.13.4}). Therefore there always exists a Hodge isometry between $\Lambda_X$ and $\Lambda_Y$.
\end{proof}

We can interpret the proposition above as an indication that Question \ref{Ques:derivedequivalent} might have an affirmative answer. A variant of this question was already asked by Markman, cf. \cite[Question 10.8]{Markman2011}.

\appendix
\section{Lattice Theory}\label{Sec:LatticeTheory}

\subsection{Lattices and discriminant forms}

\begin{Def}
An \emph{even lattice} is a free $\mathbb Z$-module $S$ of finite rank, equipped with an integral nondegenerate bilinear symmetric even form $b:S\times S\rightarrow\mathbb Z$ (and an associated nondegenerate quadratic form $q: S\rightarrow 2\mathbb Z$). \\
An even lattice is called \emph{unimodular} if the Gram matrix of $S$ has determinant $\pm 1$.\\
We denote $S(k)$ the lattice with bilinear form $kb$.
\end{Def}

\begin{Def}
An \emph{isomorphism} between lattices (or \emph{isometry}) is an isomorphism of $\mathbb Z$-modules that preserves the bilinear form; the group of isometries of a lattice $S$ is denoted $O(S)$.
\end{Def}

\begin{Ex}\label{ADE}
The $ADE$ lattices are the negative definite even lattices whose Gram matrix is the Cartan matrix of the
Dynkin $ADE$ diagrams. 
\begin{gather*}
\xymatrix@C=0.5pc@R=0.5pc{
&A_n:\quad a_1 \ar@{-}[r] &a_2 \ar@{-}[r] &\dots\ar@{-}[r] &a_n\qquad &D_n:\quad d_2 \ar@{-}[r] & d_3 \ar@{-}[d] \ar@{-}[r]  & \dots \ar@{-}[r] &d_n \\
&\ &\ &\ &\ &\ & d_1 &\ &\ }\\
\xymatrix@C=0.5pc@R=0.5pc{ &E_n: \quad e_2 \ar@{-}[r] &e_3\ar@{-}[r] &e_4 \ar@{-}[r] \ar@{-}[d]  &e_5 \ar@{-}[r] &e_6 \ar@{-}[r] &\dots \ar@{-}[r] &e_n,\\
&\ &\ &e_1 &\ &\ &\ &\ }\end{gather*}
\end{Ex}
\begin{Ex}
The lattice $U$ is the unique even unimodular lattice of rank 2. The lattice $E_8$ is the unique even negative definite unimodular lattice of rank 8.
The K3 Lattice 
$\Lambda_{\mathrm{K3}}\simeq E_8^{\oplus 2} \oplus U^{\oplus 3}$ is the unique even unimodular lattice of signature $(3,19)$.
\end{Ex}

\begin{Rem}\label{U}
The lattice $U$ represents primitively every even number, i.e. for every $n\in\mathbb Z$ we can find a a primitive element $v$ in $U$ with self-intersection $2n$: calling $u_1,u_2$ the generators of $U$ such that their intersection matrix is
\[U=\begin{bmatrix}0 & 1\\ 1 & 0\end{bmatrix},\]
then $v=u_1+nu_2$.
\end{Rem}

\begin{Def}\label{def:divisibility}
Let $s$ be a primitive element of $S$: $s$ has divisibility $d$ if $d|b(s,x)$ for every $x$ in $S$. Notice that, if $S$ is unimodular, $s$ has always divisibility 1.
\end{Def}

\begin{Def} For an even lattice $S$ we can introduce the \emph{dual lattice} $S^*$ as
\[S^*=\{\ x \in S\otimes \mathbb Q \ \mid\ \forall s \in S,\ b_{\mathbb Q}(x, s) \in\mathbb Z\ \}\]
where $b_{\mathbb Q}$ denotes the $\mathbb Q$-linear extension of $b$.\end{Def}
\begin{Rem} 
Notice that $S^*\simeq Hom(S,\mathbb Z)$ via $x\mapsto b_{\mathbb Q}(x, - )$; viceversa, given $f\in Hom(S, \mathbb Z)$, and fixing a basis of $S$ such that $b$ is represented by an integer symmetric nondegenerate matrix $B$, and $f$ by an $1\times n$ vector, the element $x_f$ such that $f=b_{\mathbb Q}(x_f, - )$ is $x_f=fB^{-1}$.
\end{Rem}

\begin{Def}\label{discriminant}
Denote \emph{discriminant group} of $S$ $A_S:= S^*/S$, where $S\hookrightarrow S^*$ via $s\mapsto b(s, - )$. This is a finite group, whose order is equal to $|det(B)|$, where $B$ is any matrix that represents $b$.\\
Denote $\ell$ the \emph{length} of $A_S$, that is defined as its minimum number of generators; denote also $\ell(S)=\ell(A_S)$. 
Define the \emph{discriminant (quadratic) form} $q_S: A_S\rightarrow\mathbb{Q}/2\mathbb{Z}$, induced on $A_S$ by the quadratic form $q$ of $S$.
\end{Def}

\begin{Rem} \label{split}
Given two even lattices $S$ and $K$, it holds $A_{S\oplus K}=A_S\oplus A_K$: in fact $Hom(S\oplus K,\mathbb Z)=Hom(S,\mathbb Z)\oplus Hom(K,\mathbb Z)$, and $(Hom(S,\mathbb Z)\oplus Hom(K,\mathbb Z))/(S\oplus K)$ splits in the direct sum of the two quotients in virtue of the fact that the embedding $S\oplus K\hookrightarrow Hom(S,\mathbb Z)\oplus Hom(K,\mathbb Z)$ is by components.
\end{Rem}

\begin{Def}\label{genus}
The \emph{genus} of a lattice $S$ is the set of all lattices 
with the same signature of $S$ and discriminant form equivalent to $q_S$; notice that lattices in the same genus may not be isomorphic.\\
Two quadratic forms defined on a finite abelian group $G$ are equivalent if and only if they are $p$-equivalent, i.e. equivalent when  restricted to the maximal $p$ group $A_p$ contained in $G$, for every prime $p$.
\end{Def}

The criteria for $p$-equivalence of torsion quadratic forms are given in (\cite{Nikulin79}, \S 1.7-9), and in (\cite{MM}, \S IV). In particular, we refer the reader to Lemma 1.4, Cor. 2.5, Prop. 3.2 of the latter.\\
We write down some sufficient conditions we used above to show that a lattice is unique (up to isometries) in its genus:

\begin{Prop}[\cite{MM}, Cor. VIII.4.2]\label{MM_VIII.4.2}. Let $S$ be an indefinite integral quadratic form with
rank at least 3 and $rk(S) \geq \ell(S) + 2$. Then $S$ is uniquely determined by its signature and discriminant form.\end{Prop}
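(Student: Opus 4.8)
The plan is to deduce this statement, which is classical, from the theory of genera of indefinite lattices in the form given by Nikulin \cite{Nikulin79} and Miranda--Morrison \cite{MM}; the argument has three ingredients. First I would reformulate the hypothesis. By the general theory of discriminant forms, two even lattices lie in the same genus --- equivalently, are isometric over $\Z_p$ for every prime $p$ and over $\R$ --- if and only if they have the same signature and isometric discriminant quadratic forms (cf. Definition \ref{genus}). Thus the assertion ``$S'$ has the same signature and discriminant form as $S$'' is exactly ``$S'$ lies in the genus of $S$'', and the genus of $S$ is a finite nonempty set of isometry classes; what must be shown is that this set is a singleton.

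Second, I would invoke the Eichler--Kneser theorem: an indefinite $\Z$-lattice of rank $\geq 3$ contains exactly one isometry class in each of its spinor genera. Hence it suffices to prove that, under the hypothesis $\rank(S)\geq\ell(S)+2$, the genus of $S$ consists of a single spinor genus. The obstruction to this is the finite abelian group
\[
\Q^\times \backslash \mathbb{A}_\Q^\times \big/ \left( \R^\times_{>0}\cdot \prod_p \theta\bigl(O(S\otimes\Z_p)\bigr) \right),
\]
where $\theta$ denotes the spinor norm, and one must check it is trivial.

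Third --- and this is the computational heart --- I would carry out the local spinor-norm computation. For an odd prime $p$, the inequality $\rank(S)\geq\ell(S)+2$ forces the $p$-adic Jordan decomposition of $S\otimes\Z_p$ to have a unimodular component of rank $\geq 2$, and for such a form $\theta(O(S\otimes\Z_p))\supseteq\Z_p^\times$; at the prime $2$ the same inequality (this is the reason for the constant $2$) guarantees that $\theta(O(S\otimes\Z_2))$ likewise contains $\Z_2^\times$, after the delicate case analysis of the dyadic Jordan splitting. Since $S$ is indefinite, $\theta(O(S\otimes\R))=\R^\times$, so a uniformizer at each place is in the image as well; as the idele class group $\Q^\times\backslash\mathbb{A}_\Q^\times$ is generated by $\R^\times_{>0}$, the units $\Z_p^\times$, and the uniformizers, the displayed quotient is trivial. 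Combining the three steps, the genus of $S$ is a single spinor genus, hence a single isometry class, which is the claim.

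The step I expect to be the main obstacle is the dyadic part of the third ingredient: computing $\theta(O(S\otimes\Z_2))$ requires tracking the $2$-adic Jordan constituents of $S$ and their interactions, and it is precisely there that the bound $\rank(S)\geq\ell(S)+2$ is essential rather than a weaker one. Since this computation is classical and somewhat lengthy, in the write-up I would cite \cite[Ch. VIII]{MM} (equivalently \cite[\S 1.13--1.14]{Nikulin79}) for it rather than reproduce it.
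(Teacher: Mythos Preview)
The paper does not give its own proof of this proposition: it is stated in the appendix as a quotation of \cite[Cor.~VIII.4.2]{MM} and used as a black box, so there is no argument in the paper to compare yours against. Your outline is exactly the standard route by which the cited result is established in Miranda--Morrison (and, in slightly different language, in \cite[\S 1.13]{Nikulin79}): reduce uniqueness in the genus to uniqueness in the spinor genus via Eichler--Kneser for indefinite rank~$\geq 3$, then kill the spinor-genus obstruction by a local spinor-norm computation using $\rank(S)\geq\ell(S)+2$.

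One small wording issue in your third step: from indefiniteness you only get $\theta(O(S\otimes\R))=\R^\times$, not that uniformizers at the finite places lie in the local images. You don't need them to. What makes the displayed quotient vanish is the equality $\mathbb{A}_\Q^\times=\Q^\times\cdot\R_{>0}^\times\cdot\prod_p\Z_p^\times$ (class number one for $\Q$), so once $\Z_p^\times\subseteq\theta(O(S\otimes\Z_p))$ for every $p$ and $\R_{>0}^\times\subseteq\theta(O(S\otimes\R))$, the uniformizers are absorbed by the $\Q^\times$ on the left, not by the spinor norms. With that adjustment your sketch is correct, and your identification of the dyadic spinor-norm analysis as the delicate point is accurate; citing \cite[Ch.~VIII]{MM} for it is appropriate.
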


\begin{Prop}[\cite{Nikulin79}, Cor. 1.13.4]\label{Nik_1.13.4} Let $S$ be an even lattice of signature $(s_+,s_-)$ and discriminant form $q_S$. Then $U\oplus S$ is the unique even lattice with
the same discriminant form $q_S$ and signature $(s_++1,s_-+1)$.
\end{Prop}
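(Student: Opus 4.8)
The plan is to separate the statement into its two evident components: that $U\oplus S$ \emph{is} an even lattice with the prescribed invariants, and that it is the \emph{only} one. The first is a direct computation, while the second will be deduced from the uniqueness criterion recorded in Proposition \ref{MM_VIII.4.2}.

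First I would check that $U\oplus S$ is a legitimate candidate. Since $U$ is unimodular we have $A_U=0$, so by Remark \ref{split} the discriminant group and form of $U\oplus S$ coincide with those of $S$; in particular $q_{U\oplus S}=q_S$. As $\mathrm{sign}(U)=(1,1)$, additivity of the signature gives $\mathrm{sign}(U\oplus S)=(s_++1,s_-+1)$. Thus $U\oplus S$ is an even lattice with exactly the required signature and discriminant form, and the entire content of the proposition is the uniqueness assertion.

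For uniqueness I would verify that $U\oplus S$ meets the three hypotheses of Proposition \ref{MM_VIII.4.2}: it is indefinite because it contains the hyperbolic plane $U$; its length is $\ell(U\oplus S)=\ell(S)$ (again because $A_U=0$), so that $\rank(U\oplus S)=\rank(S)+2\geq \ell(S)+2=\ell(U\oplus S)+2$; and, provided $\rank(S)\geq 1$, we also have $\rank(U\oplus S)\geq 3$. Now let $L$ be any even lattice with signature $(s_++1,s_-+1)$ and discriminant form $q_S$. Its rank equals $s_++s_-+2=\rank(U\oplus S)$, and since $A_L\cong A_S$ its length equals $\ell(S)=\ell(U\oplus S)$; hence $L$ is likewise indefinite of rank $\geq 3$ with $\rank(L)\geq \ell(L)+2$, so it too falls under Proposition \ref{MM_VIII.4.2} with the same signature and discriminant form as $U\oplus S$. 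As that proposition asserts at most one isometry class for given signature and discriminant form under these hypotheses, we conclude $L\cong U\oplus S$. I would then dispose of the degenerate case $\rank(S)=0$ separately: here $S=0$, $q_S=0$, and the claim reduces to the classical fact that $U$ is the unique even unimodular lattice of signature $(1,1)$, already recorded in the example on $U$.

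The genuinely hard arithmetic input is entirely absorbed into Proposition \ref{MM_VIII.4.2} (equivalently Nikulin's $p$-adic uniqueness theorem): the statement that an indefinite even lattice of rank large relative to its length is determined by its signature and discriminant form. Granting that, the argument above is formal bookkeeping, and the only points demanding genuine care are the rank–length inequality — which holds precisely because adjoining $U$ raises the rank by $2$ while leaving the length unchanged — and the separate handling of the rank-zero case, to which Proposition \ref{MM_VIII.4.2} does not directly apply.
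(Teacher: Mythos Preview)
Your proof is correct. The paper does not actually prove this proposition; it is stated as a citation of Nikulin without argument, so there is no ``paper's own proof'' to compare against. Your derivation from Proposition~\ref{MM_VIII.4.2} is valid: the key observation that adjoining $U$ increases the rank by $2$ while leaving the discriminant group (hence the length) untouched is exactly what forces the inequality $\rank\geq\ell+2$, and your separate treatment of the degenerate case $\rank(S)=0$ correctly patches the one situation where the rank-$\geq 3$ hypothesis fails. The only implicit fact you use without comment is $\ell(S)\leq\rank(S)$, which holds because $A_S$ is a quotient of $S^*\cong\mathbb{Z}^{\rank S}$; you might mention this for completeness, but it is standard.
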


\begin{Prop}[\cite{MM}, Cor. VIII.7.8]\label{MM_VIII.7.8}
Let $S$ be an indefinite lattice such that $rk(S)\geq 3$. Write $A_S=\mathbb Z/d_1\mathbb Z\oplus,\dots ,\oplus\mathbb Z/d_r\mathbb Z$
with $d_i\geq 1$ and $d_i\mid d_{i+1}$. Suppose that one of the following holds:
\begin{enumerate}
\item $d_1=d_2=2$;
\item $d_1=2, d_2=4$ and $d_3=_8 4$;
\item $d_1=d_2=4$.
\end{enumerate}
Then $S$ is uniquely determined by its signature and discriminant form (and the map $O(S)\rightarrow O(q_S)$ is surjective).
\end{Prop}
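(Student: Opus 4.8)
This is a local-to-global uniqueness statement for indefinite quadratic forms, and the natural framework is the theory of spinor genera, which underlies \cite{MM}, Ch.~VII--VIII. The plan is the following. For an indefinite $\mathbb Z$-lattice $S$ with $\mathrm{rk}(S)\ge 3$, Eichler's strong approximation theorem for spin groups identifies the number of isometry classes in the genus of $S$ with the number of proper spinor genera, and the latter is the index of $\mathbb Q^\times\cdot\mathbb R^\times\cdot\prod_p\theta\bigl(O^+(S\otimes\mathbb Z_p)\bigr)$ in the id\`ele group $\mathbb A_{\mathbb Q}^\times$, where $\theta$ denotes the spinor norm. Since $S$ is indefinite the archimedean factor is all of $\mathbb R^\times$, so it suffices to show $\theta\bigl(O^+(S\otimes\mathbb Z_p)\bigr)\supseteq\mathbb Z_p^\times$ for every prime $p$: then the product of local spinor images is all of $\widehat{\mathbb Z}^\times$, the index is $1$, and the genus is a single class. (Equivalently, in the language of \cite{MM} one checks that the abelian group obstructing the class number of the genus vanishes.) The surjectivity of $O(S)\to O(q_S)$ will then follow from the corresponding local surjectivities $O(S\otimes\mathbb Z_p)\to O(q_{S\otimes\mathbb Z_p})$ together with strong approximation and the single-class conclusion, exactly as in Nikulin's Theorem~1.14.2: a $\bar g\in O(q_S)$ lifts to local isometries, these glue to an adelic isometry within the genus, and one descends to a genuine $g\in O(S)$.

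The first observation is that hypotheses (1)--(3) concern only the prime $2$. In each case $d_1,d_2$ (and $d_3$ in case (2)) are coprime to every odd prime $p$, so the $p$-part of $A_S$ has length $\le\ell(S)-2\le\mathrm{rk}(S)-2$; hence in the Jordan splitting $S\otimes\mathbb Z_p=\perp_{k\ge0}p^kL_k$ the unimodular constituent $L_0$ has rank $\ge 2$. A $\mathbb Z_p$-unimodular lattice of rank $\ge 2$ with $p$ odd represents every unit, so two suitable reflections already give $\theta\bigl(O^+(S\otimes\mathbb Z_p)\bigr)\supseteq\mathbb Z_p^\times$, and the same input yields the local surjectivity onto $O(q_{S\otimes\mathbb Z_p})$; for $p\nmid\det S$ all of this is classical and unconditional. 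So only $p=2$ is at issue.

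At $p=2$ one reads the bottom of the $2$-adic Jordan decomposition $S\otimes\mathbb Z_2=\perp_{k\ge0}2^kL_k$ off the invariant factors: case (1) is equivalent to $\mathrm{rk}(L_1)\ge 2$; case (3) to $L_1=0$ and $\mathrm{rk}(L_2)\ge 2$; case (2) to $\mathrm{rk}(L_1)=1$ and $\mathrm{rk}(L_2)\ge 2$, the congruence $d_3\equiv 4\pmod 8$ saying precisely that $d_3$ supplies a second cyclic summand of order $4$ rather than a summand of order $\ge 8$. In each case, using also that $\mathrm{rk}(S)\ge 3$ forces at least one further Jordan constituent, one exhibits enough vectors in these low-scale blocks so that the products of their norms run over all of $\mathbb Z_2^\times/(\mathbb Z_2^\times)^2$; this gives $\theta\bigl(O^+(S\otimes\mathbb Z_2)\bigr)\supseteq\mathbb Z_2^\times$, and the parallel $2$-adic bookkeeping (Nikulin \cite{Nikulin79}, \S1.8--1.9, or \cite{MM}, \S VIII.3--7) shows $O(S\otimes\mathbb Z_2)\to O(q_{S\otimes\mathbb Z_2})$ is surjective, completing the argument.

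The entire difficulty is concentrated in this last step: correctly translating the conditions on $d_1,d_2,d_3$ into the $2$-adic Jordan data (in particular the role of $d_3\equiv 4\pmod 8$ in case (2), which guarantees a rank-$2$ scale-$4$ block rather than merely a $\mathbb Z/8$), and then carrying out the explicit reflection computation showing both that the $2$-adic spinor norm exhausts $\mathbb Z_2^\times$ and that the map to $O(q)$ is onto. This requires the full apparatus of $2$-adic quadratic forms --- even versus odd Jordan blocks, oddities, and the sign-walking relations, cf.\ \cite{ConwaySloane1999}, Ch.~15 --- and is exactly why the statement does not reduce to the cruder bound of Proposition~\ref{MM_VIII.4.2}.
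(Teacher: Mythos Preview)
The paper does not give a proof of this proposition: it is stated in the appendix as a reference result quoted from Miranda--Morrison \cite[Cor.~VIII.7.8]{MM}, with no argument supplied. So there is no ``paper's own proof'' to compare your attempt against.

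That said, your sketch follows the correct strategy, which is the one underlying the cited source: reduce to spinor genera via strong approximation (valid since $S$ is indefinite of rank $\ge 3$), observe that the hypotheses are vacuous at odd primes, and then analyze the $2$-adic situation. Your translation of the three cases into the $2$-adic Jordan data is accurate: $d_1=d_2=2$ forces $\mathrm{rk}(L_1)\ge 2$; $d_1=d_2=4$ forces $L_1=0$, $\mathrm{rk}(L_2)\ge 2$; and $d_1=2$, $d_2=4$, $d_3\equiv 4\pmod 8$ forces $\mathrm{rk}(L_1)=1$, $\mathrm{rk}(L_2)\ge 2$.

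The gap is that you do not actually perform the $2$-adic computation --- you only assert that ``one exhibits enough vectors'' to fill out $\mathbb Z_2^\times/(\mathbb Z_2^\times)^2$ and that the map to $O(q_{S\otimes\mathbb Z_2})$ is onto. This is precisely the nontrivial content of the result, and it is sensitive to whether the Jordan constituents are of type I or II (odd or even, in Conway--Sloane terminology) and to the interaction between adjacent scales. Your final paragraph acknowledges this honestly, but as written the argument is a roadmap rather than a proof. If you want a self-contained argument you would need to carry out the case analysis of \cite[\S VIII.5--7]{MM} (or the equivalent in Nikulin \cite[\S1.14]{Nikulin79}) explicitly; otherwise, citing the result as the paper does is the appropriate choice.
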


\subsection{Embeddings}

\begin{Def}\label{embedding}
An \emph{embedding} of (even) lattices $(S,q)\hookrightarrow (M,\tilde q)$ is an injective homomorphism of $\mathbb{Z}$-modules such that $\tilde q|_S=q$. In this case, we say that $M$ is an \emph{overlattice} of $S$, or that $S$ is a \emph{sublattice} of $M$. An embedding is \emph{primitive} if $M/S$ is free; an embedding is \emph{of finite index} if $M/S$ is a finite (abelian) group.
\end{Def}

\begin{Def}\label{isometric_embeddings}
Two embeddings $S\hookrightarrow M$,  $S\hookrightarrow M'$ are \emph{isomorphic} if there is an isometry between $M$ and $M'$ that restricted to $S$ is the identity.
\end{Def}

The theory of existence and uniqueness of primitive embeddings is vast. We recall here just the theorems that we used above.

\begin{Thm}[\cite{Nikulin79}, Thm. 1.14.4]\label{Nik_1.14.4}
Let $S$ be an even lattice of signature
$(s_+,s_-)$ and let $L$ be an even unimodular lattice of signature $(l_+, l_-)$. There exists a
unique primitive embedding of $S$ into $L$, provided the following conditions hold:
\begin{enumerate}
    \item $l_+-s_+>0$ and $l_--s_->0$;
    \item $rk(L)-rk(S)\geq 2+\ell(({A_S})_p)$ for every $p\neq 2$;
    \item if $rk(L)-rk(S)= 2+\ell(({A_S})_2)$, then $q_S=q'\oplus u_2$ or $q_S=q'\oplus v_2$, where $u_2, v_2$ are the discriminant form of the lattices $U(2)$, $D_4$ respectively. 
\end{enumerate}
\end{Thm}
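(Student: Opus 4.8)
The plan is to follow Nikulin's general machinery: trade the problem of embedding $S$ primitively into the unimodular lattice $L$ for the problem of controlling the orthogonal complement $T := S^{\perp_{L}}$, whose invariants are rigidly determined, and then feed in the existence and uniqueness theorems for lattices with prescribed signature and discriminant form.

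\textbf{Reduction to the complement.} First I would recall that, since $L$ is unimodular, a primitive sublattice $S \subseteq L$ with complement $T$ exhibits $L$ as a finite-index even overlattice of $S \oplus T$ whose glue subgroup $L/(S\oplus T) \subseteq A_S \oplus A_T$ is the graph of an anti-isometry $\gamma\colon (A_S, q_S) \to (A_T, -q_T)$; conversely, an even lattice $T$ together with such a $\gamma$ reconstructs an even overlattice of $S\oplus T$ which is unimodular because the graph is a maximal isotropic glue group of the correct order. In particular $\rank(T) = \rank(L) - \rank(S)$, $\mathrm{sign}(T) = (l_+ - s_+,\, l_- - s_-)$ and $q_T \cong -q_S$, and since indefinite even unimodular lattices are classified by their signature, the reconstruction yields a lattice isometric to $L$ as soon as the signatures match (here one uses condition (1), which forces $l_\pm - s_\pm \ge 1$, hence $T$ indefinite). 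So the statement reduces to: (i) an even lattice $T$ with $\mathrm{sign}(T) = (l_+-s_+, l_--s_-)$ and $q_T \cong -q_S$ exists, and (ii) any two such data $(T,\gamma)$, $(T',\gamma')$ are interchanged by an isometry of $L$ fixing $S$ pointwise.

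\textbf{Existence and uniqueness inputs.} For (i) I would invoke Nikulin's existence criterion for even lattices with given signature and discriminant form \cite{Nikulin79}: hypotheses (1)--(3) are precisely the conditions under which a form of rank $\rank(L)-\rank(S)$ with discriminant form $-q_S$ exists --- condition (2) encodes the $p$-adic bound ``rank $\ge$ length'' for odd $p$, and condition (3) encodes the borderline $2$-adic case, where realizing $q_S$ at the prime $2$ in rank $\ell+2$ forces a $u_2$- or $v_2$-summand. For (ii) I would argue in two steps. The lattices $T$ and $T'$ share signature and discriminant form, hence lie in one genus; since $\rank(T) = \rank(L) - \rank(S) \ge \ell(A_T) + 2$ (using (2), (3) and $\ell(A_T) = \ell(A_S)$) and $T$ is indefinite, $T$ is unique in its genus by Proposition \ref{MM_VIII.4.2}, with Proposition \ref{MM_VIII.7.8} covering the low-rank $2$-elementary borderline cases, so fix an isometry $T \cong T'$. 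The same results give surjectivity of $O(T) \to O(q_T)$, so $\gamma' \circ \gamma^{-1} \in O(q_T)$ lifts to an isometry of $T$; gluing this lift with $\mathrm{id}_S$ gives an isometry of $S \oplus T$ carrying $\mathrm{graph}(\gamma)$ to $\mathrm{graph}(\gamma')$, which then extends to an isometry of $L$ that is the identity on $S$ and identifies the two embeddings.

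\textbf{Main obstacle.} The delicate point throughout is the equality case $\rank(L) - \rank(S) = 2 + \ell((A_S)_2)$: there, neither the existence of $T$, nor its uniqueness within its genus, nor the surjectivity $O(T)\to O(q_T)$ comes for free, and all three must be extracted from the $2$-adic classification of finite quadratic forms; it is exactly to make that classification cooperate that one imposes $q_S = q' \oplus u_2$ or $q_S = q' \oplus v_2$. Away from this borderline, everything is formal once the dictionary between primitive embeddings into a unimodular lattice and their orthogonal complements is in place.
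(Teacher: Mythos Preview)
The paper does not give its own proof of this statement: it is quoted in the appendix as Theorem~1.14.4 of \cite{Nikulin79} with no argument supplied, and is used elsewhere as a black box. So there is no ``paper's proof'' to compare against.

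That said, your sketch is exactly Nikulin's own strategy and is sound in outline: translate a primitive embedding $S\hookrightarrow L$ with $L$ unimodular into the data of its orthogonal complement $T$ together with a glue isometry $(A_S,q_S)\cong (A_T,-q_T)$; then use the existence theorem for even lattices with prescribed signature and discriminant form to produce $T$, uniqueness-in-genus (your Proposition~\ref{MM_VIII.4.2}, with the borderline $2$-adic case handled as you note) to pin down $T$, and surjectivity of $O(T)\to O(q_T)$ to absorb the ambiguity in the glue map. One small comment: when you write ``$\rank(T)\ge \ell(A_T)+2$ (using (2), (3))'', note that condition~(2) as stated in the paper only covers odd primes, so the inequality at $p=2$ is not given for free by the hypotheses as written; in Nikulin's original formulation there is also a rank bound at $p=2$, and condition~(3) is the additional requirement in the equality case. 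This is a quirk of the paper's transcription of the statement rather than a flaw in your argument.
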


\begin{Thm}[\cite{Nikulin79}, Prop. 1.15.1]\label{Nik_1.15.1}.
The primitive embeddings of a lattice $S$ with signature $(s_+, s_-)$ and discriminant form $q_S$ into an even lattice $M$ with signature $(m_+, m_-)$ and discriminant form $q_M$ are determined by the sets $(H_S, H_M, \gamma, T, \gamma_T)$, where: 
\begin{enumerate}
\item $H_S \subset A_S$ and $H_M\subset A_M$ are subgroups and $\gamma: q_S|_{H_S}\rightarrow q_M|_{H_M}$ is an isomorphism of the subgroups, preserving
the restrictions of the forms;
\item $T$, which will be the orthogonal complement to $S$ in $M$, is an even lattice with signature
$(m_+-s_+, m_- -s_-)$ and discriminant form $q_T$;
\item $\gamma_T: q_T\rightarrow -\delta$ is an isomorphism of discriminant forms, where $\delta\simeq ((q_S \oplus -q_M)|_{\Gamma^\perp})/\Gamma$, $\Gamma=H_S\oplus H_M$ (notice that $(q_S \oplus -q_M)|_\Gamma=0$ by choice of $\gamma$).
\end{enumerate}
Two such sets, $(H_S, H_M, \gamma, T, \gamma_T)$ and $(H'_S, H'_M, \gamma', T', \gamma_{T'})$ determine isomorphic
primitive embeddings if and only if $H_S = H'_S$ and there exist $\xi\in O(A_M)$ and $\psi:T\rightarrow T'$ isometries for which 
$\gamma' = \xi\circ\gamma$ and $\bar\xi\circ\gamma_T = \gamma_{T'}\circ\bar\psi$, where $\bar\xi$ is the isomorphism of discriminant forms $\delta$ and $\delta'$ induced by $id\oplus\xi$, and $\bar\psi$ is the isomorphism of discriminant forms $q_T$ and $q_{T'}$ induced by $\psi$.
\end{Thm}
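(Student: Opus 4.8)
The plan is to reduce the classification of primitive embeddings to the classification of even overlattices, and then to repackage the gluing data so that it is recorded through the discriminant forms of the two \emph{given} lattices rather than through the complement alone. The basic input is the foundational correspondence of \cite{Nikulin79} between even overlattices $N$ of a fixed even lattice $L$ and isotropic subgroups $G\subseteq A_L$, under which $N=\{x\in L^{*}:\bar x\in G\}$ and $A_N\cong G^{\perp}/G$ with the induced form, $G^{\perp}$ being the orthogonal complement of $G$ in $(A_L,q_L)$.

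First I would set up the dictionary on one side. Given a primitive embedding $i\colon S\hookrightarrow M$, put $T:=i(S)^{\perp}$, so that $S\oplus T$ is a finite-index sublattice of $M$ and $M$ is an even overlattice of $L:=S\oplus T$ with $A_L=A_S\oplus A_T$ and $q_L=q_S\oplus q_T$. The subgroup $G:=M/(S\oplus T)\subseteq A_S\oplus A_T$ is then isotropic; primitivity of $S$ forces $G\cap A_S=0$, and since $T=S^{\perp}$ is automatically primitive one also has $G\cap A_T=0$. Hence $G$ is the graph of an \emph{anti-isometry} $\gamma_0\colon \pi_S(G)\xrightarrow{\sim}\pi_T(G)$, that is, an isometry $q_S|_{\pi_S(G)}\cong(-q_T)|_{\pi_T(G)}$, and $T$ necessarily has signature $(m_+-s_+,m_--s_-)$. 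This already identifies a primitive embedding with the choice of a complement $T$ together with such a gluing subgroup.

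The heart of the argument is the reciprocity that converts this $S$--$T$ gluing into the $S$--$M$ overlap datum $(H_S,H_M,\gamma)$ and identifies $q_T$ with $-\delta$. Writing $H_S:=(\pi_S(G))^{\perp_{A_S}}=(A_S\oplus 0)\cap G^{\perp}$, the hypothesis $G\cap A_S=0$ makes the composite $H_S\hookrightarrow G^{\perp}\twoheadrightarrow G^{\perp}/G\cong A_M$ injective; its image $H_M\subseteq A_M$ and the induced map $\gamma\colon H_S\to H_M$ form an isometry $q_S|_{H_S}\cong q_M|_{H_M}$, because the form on $A_M=G^{\perp}/G$ is induced by $q_S\oplus q_T$ and restricts to $q_S$ on $H_S\oplus 0$. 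Setting $\Gamma=\{(x,\gamma x):x\in H_S\}\subseteq A_S\oplus A_M$ (isotropic for $q_S\oplus -q_M$ precisely because $\gamma$ is an isometry), a direct computation with discriminant forms identifies $\delta=\big((q_S\oplus -q_M)|_{\Gamma^{\perp}}\big)/\Gamma$ with $-q_T$, which supplies $\gamma_T$. Conversely, given a tuple satisfying (1)--(3), reversing this computation recovers the isotropic subgroup $G\subseteq A_S\oplus A_T$, hence the overlattice $M$ of $S\oplus T$ and the primitive embedding; checking that the two passages are mutually inverse yields the bijection for individual embeddings.

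Finally I would treat the equivalence relation. An isometry $g$ of $M$ with $g|_S=\mathrm{id}$ preserves $T=S^{\perp}$, so it restricts to an isometry $\psi\colon T\to T'$ and induces some $\xi\in O(A_M)$; tracking these through the identifications of the previous step shows that $g$ fixes $H_S$ and transports $\gamma,\gamma_T$ exactly by $\gamma'=\xi\circ\gamma$ and $\bar\xi\circ\gamma_T=\gamma_{T'}\circ\bar\psi$, and conversely any such pair $(\xi,\psi)$ assembles into an isometry of $M$ fixing $S$. This is the stated criterion. The main obstacle is the reciprocity of the third paragraph: establishing the natural isomorphism $A_M\cong G^{\perp}/G$ and then verifying that the $S$--$T$ gluing is \emph{faithfully and bijectively} repackaged as the overlap datum $(H_S,H_M,\gamma)$ together with $(T,\gamma_T)$, with every sign convention (the minus signs on $q_M$ and on $\delta$) correctly placed. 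This is an elementary but delicate computation with quadratic forms on finite abelian groups, and the verification that the reconstruction is a genuine inverse---so that no complement or gluing is lost or double-counted---is the step most likely to hide errors.
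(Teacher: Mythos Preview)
The paper does not give its own proof of this statement: it is quoted in the appendix as Nikulin's Proposition~1.15.1 and simply cited. Your sketch is essentially Nikulin's own argument---reduce a primitive embedding to an isotropic gluing subgroup $G\subseteq A_S\oplus A_T$ via the overlattice correspondence, then repackage the $S$--$T$ gluing as the $S$--$M$ datum using $A_M\cong G^{\perp}/G$---and it is correct in outline, including the identification $H_S=(A_S\oplus 0)\cap G^{\perp}$ and the verification that $\delta\cong -q_T$; so there is nothing to compare beyond noting that you have reproduced the cited source.
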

 
\bibliography{references.bib}{}
\bibliographystyle{alpha}
\end{document}